\documentclass[11pt, DIV10,a4paper]{article}

\usepackage{natbib}
\newcommand {\ctn}{\citet} 
\usepackage{graphicx,subfigure,amsmath,latexsym,amssymb}
\usepackage{float,epsfig,multirow,rotating,times}
\usepackage{upgreek,wrapfig}
\usepackage[colorlinks,breaklinks,bookmarksopen,bookmarksnumbered]{hyperref}
\usepackage{comment}
\usepackage{url}
\usepackage{gensymb}
\usepackage{algorithm}

\newcommand{\bb}{\boldsymbol{b}}

\newcommand{\bg}{\boldsymbol{g}}
\newcommand{\bof}{\boldsymbol{f}}

\newcommand{\btheta}{\boldsymbol{\theta}}
\newcommand{\bLambda}{\boldsymbol{\Lambda}}

\newcommand{\bbeta}{\boldsymbol{\beta}}

\newcommand{\bSigma}{\boldsymbol{\Sigma}}

\newcommand{\bmu}{\boldsymbol{\mu}}
\newcommand{\bnu}{\boldsymbol{\nu}}

\newcommand{\bc}{\boldsymbol{c}}
\newcommand{\bD}{\boldsymbol{D}}

\newcommand{\bh}{\boldsymbol{h}}
\newcommand{\bH}{\boldsymbol{H}}

\newcommand{\bA}{\boldsymbol{A}}

\newcommand{\bx}{\bm{x}}
\newcommand{\bX}{\boldsymbol{X}}
\newcommand{\by}{\boldsymbol{y}}

\newcommand{\bz}{\boldsymbol{z}}

\newcommand{\bzero}{\boldsymbol{0}}

\newtheorem{theorem}{Theorem}

\newtheorem{lemma}[theorem]{Lemma}

\newtheorem{remark}[theorem]{Remark}

\newtheorem{assumption}{Assumption}[section]

\newenvironment{proof}[1][Proof]{\textbf{#1.} }{\ \rule{0.5em}{0.5em}}

\newcommand{\bm}{\mathbf}

\numberwithin{equation}{section}
\numberwithin{algo}{section}
\numberwithin{table}{section}
\numberwithin{figure}{section}

%

\usepackage{setspace}
\usepackage[mathscr]{euscript}
\usepackage[margin=1in]{geometry}
\singlespacing

\begin{document}


\title{\vspace{-0.8in}
{\bf Function Optimization with Posterior Gaussian Derivative Process}}
\author{Sucharita Roy and Sourabh Bhattacharya\thanks{
Sucharita Roy is an Assistant Professor and Head of the Department of Mathematics in St. Xavier's College, Kolkata. 
Sourabh Bhattacharya is a Professor in Interdisciplinary Statistical Research Unit, Indian Statistical
Institute, 203, B. T. Road, Kolkata 700108.
Corresponding e-mail: sourabh@isical.ac.in.}}
\date{\vspace{-0.5in}}
\maketitle%

\begin{abstract}
Function optimization is fundamental to virtually all scientific disciplines, yet no single method dominates across all problem classes. 
In this article we propose a novel Bayesian optimization algorithm specifically designed for functions whose first and second partial derivatives are available. 
Our approach models the objective function as a Gaussian process, which induces a Gaussian derivative process. 
Given a small set of initial function evaluations, we obtain the posterior distribution of the derivative process and then construct a posterior for the stationary points by setting the derivative to zero. 
A carefully designed recursive importance‑resampling scheme, combined with prior constraints on the gradient norm and Hessian definiteness, drives the algorithm progressively toward the true optima.

We provide a rigorous theoretical foundation, proving almost sure convergence of the algorithm to the true optima as the number of stages tends to infinity. 
This relies on novel results establishing almost sure uniform convergence of the Gaussian process and its derivative process posteriors to the true function and its derivatives under fixed‑domain infill asymptotics; rates of convergence are also derived. 
In addition, we give a Bayesian characterization of the number of optima using the information accumulated during the algorithm.

The practical effectiveness of our method is demonstrated on five diverse optimization problems, including finding maxima, minima, saddle points, and inconclusive cases. 
Problems range from simple one‑dimensional examples to challenging $50$‑ and $100$‑dimensional nonlinear least‑squares problems. 
Notably, on a real‑world Poisson regression (AIDS deaths data), our Bayesian procedure achieves a substantially smaller gradient norm at the maximum likelihood estimate than Fisher scoring, BFGS, simulated annealing, and multi‑start quasi‑Newton. 
The posterior simulation nature of our algorithm enables it to explore neighborhoods of solutions obtained by other methods, consistently yielding more accurate results. 
	The code is written in C with MPI parallelisation and is available at 
	\url{https://github.com/Sourabh-Bhattacharya/FUNCTION_OPT_GDP} for implementing the AIDS data problem. 
	\\[2mm]
{\it Keywords:} Dirichlet process; Fixed‑domain infill asymptotics; Function optimization; Gaussian derivative process; Importance resampling; Parallel processing; Transformation based Markov Chain Monte Carlo (TMCMC).
\end{abstract}

\section{Introduction}
\label{sec:intro}

Function optimization is a cornerstone of scientific computing, with applications across virtually every discipline. Despite decades of research, no single optimization method dominates all problem classes; for any sufficiently large family of problems, it is remarkably difficult to identify a methodology that consistently outperforms others in terms of theoretical foundation, accuracy, computational efficiency, or robustness. Consequently, practitioners often resort to heuristic methods tailored to the specific problem at hand.

Among stochastic optimization methods, simulated annealing is a general‑purpose technique, but its practical success depends critically on carefully chosen temperature schedules and proposal distributions, both of which become extremely challenging to tune in moderate to high dimensions.

In this article, we propose and develop a novel Bayesian algorithm for optimizing functions whose first and second partial derivatives are available. Our method assumes that the objective function can be evaluated exactly (no observation noise) and that its first and second partial derivatives are available analytically or via automatic differentiation. Thus the setting is that of deterministic optimization, typical in computer experiments and engineering design, rather than stochastic optimization with noisy evaluations. Our approach embeds the objective function, together with its derivatives, into a random function framework driven by Gaussian processes and their induced derivative processes. With data comprising a set of input points and the corresponding function values, we first obtain the posterior derivative process. We then construct the posterior distribution of the solutions obtained by setting the random partial derivative functions to the null vector. This posterior is expected to emulate the stationary points of the objective function.

We further incorporate a uniform prior over the domain, constrained so that the first partial derivatives have small Euclidean norm and the Hessian matrix is positive definite (for minimization) or negative definite (for maximization). These constraints help the posterior solutions approximate the true optima even when the dataset is small. However, for very small datasets the prior may be too restrictive to allow effective posterior simulation. Therefore, we begin with a weaker restriction (for instance, bounding the gradient norm by a relatively large constant) to obtain an initial set of posterior solution simulations. Then, in successive iterative stages, we refine the prior restrictions (making them stricter), and at each stage we augment the dataset with new realizations (and their function values) that satisfy the current, tighter constraints. As the number of stages tends to infinity, the posterior distributions converge to the true optima. These ideas lead to a general, effective, and adaptive optimization algorithm.

The convergence of our algorithm hinges on the convergence of the posterior derivative process to the true derivatives of the objective function, which in turn depends on a suitable design of the input points. Under an appropriate fixed‑domain infill asymptotics setup, we prove almost sure uniform convergence of the posterior Gaussian process and its derivative process to the objective function and its derivatives. Notably, we also obtain rates of convergence under a specific infill design. To the best of our knowledge, these results are new and of independent interest. Using them, we prove almost sure convergence of our optimization algorithm to the true optima as the number of stages grows. As an aside, we also provide a Bayesian characterization of the number of optima, exploiting information accumulated by the algorithm.

We illustrate our method on five optimization problems that include finding maxima, minima, saddle points, and even inconclusive cases. The examples range from simple one‑dimensional problems to challenging 50‑ and 100‑dimensional problems. In each case we obtain encouraging and insightful results. Along the way we discuss various computational and accuracy issues. A key strength of our approach is its ability to achieve significantly more accurate solutions than existing optimization algorithms, thanks to the posterior simulation paradigm embedded in the method. Indeed, by exploring neighborhoods around any solution provided by other methods, our algorithm almost surely finds a point at least as close to the true optimum.

We note that existing Bayesian optimization methods using Gaussian processes typically treat the objective function as a black box and assume derivatives are unavailable 
(see, for example, \ctn{Frazier18} and references therein). Those methods are naturally less accurate when derivatives are accessible. Moreover, we are not aware of any convergence results for such derivative‑free Gaussian process methods, which rely on heuristic acquisition functions. Because many acquisition functions exist, each with its own advantages and drawbacks, those methods lack a solid theoretical foundation and may be unreliable in practice. In this article we do not further discuss those existing approaches.

The remainder of the paper is organized as follows. Section~\ref{sec:gp_derivative} provides the derivation of the posterior Gaussian derivative process. Section~\ref{sec:posterior_optima} derives the posterior distribution for the random optima obtained by setting the derivative process to zero, together with additional restrictions based on the objective function’s derivatives. Section~\ref{sec:conv} presents almost sure uniform convergence results for the Gaussian process and its derivative process posteriors, along with proofs. Section~\ref{sec:algo} introduces our general‑purpose Bayesian optimization algorithm. Section~\ref{sec:recursive} establishes a Bayesian characterization of the number of optima. Section~\ref{sec:exps} illustrates the algorithm on various optimization problems. Section~\ref{sec:conclusion} summarizes our contributions and offers concluding remarks.

\section{Posterior Gaussian derivative process}
\label{sec:gp_derivative}

\subsection{Details of the objective function}
\label{subsec:details_objective_function}
Consider any function $f:\mathbb R^d\mapsto \mathbb R$, where $\mathbb R$ is the real line and $d~(\geq 1)$ is the dimension 
of the input space, assumed to be finite. We further assume that the second order
partial derivatives of $f$, namely, $\partial^2f(\bx)/\partial x_i\partial x_j$ exist and are continuous for all $\bx\in\mathcal X\subseteq\mathbb R^d$ for $i,j=1,\ldots,d$.
The objective is to optimize the function $f(\bx)$ with respect to $\bx\in\mathcal X$. For theoretical purposes, we assume that $\mathcal X$ is compact; this assumption
is not required for implementation of our methodology.

\subsection{Data from the objective function}
\label{subsec:data_objective_function}
Assume that corresponding to arbitrary inputs $\{\bx_1,\ldots,\bx_n\}\in\mathcal X$, where $n>1$, the output vector 
$\bof_n=\left(f(\bx_1),\ldots,f(\bx_n)\right)^T$ is available. Here $^T$ denotes transpose.
Let $\bD_n=\{(\bx_i,f(\bx_i):i=1,\ldots,n\}$.

\subsection{Gaussian process representation of the objective function}
\label{subsec:gp_objective_function}
Let $g:\mathbb R^d\mapsto \mathbb R$ denote a random function such that given $\bD_n$, $g(\bx_i)=f(\bx_i)$ for $i=1,\ldots,n$. 

Since Gaussian processes have the above interpolation property, we model $g(\cdot)$ by a Gaussian process with mean function $\mu(\cdot)$ and covariance function
$\sigma^2c(\cdot,\cdot)$, where $\sigma^2$ is the process variance. In other words, $E\left[g(\bx)\right]=\mu(\bx)$ for any $\bx\in\mathcal X$ and 
$Cov(g(\bx),g(\by))=\sigma^2c(\bx,\by)$, for all $\bx,\by\in\mathcal X$. Let $\mu(\cdot)$ be continuous in $\mathcal X$ and
$c(\cdot,\cdot)$ be Lipschitz continuous on $\mathcal X\times\mathcal X$.
Then $g(\cdot)$ is actually a continuous-path Gaussian process with mean function $\mu(\cdot)$ and covariance function $\sigma^2c(\cdot,\cdot)$.

We shall use the notation $\bg_n$ to denote $(g(\bx_1),\ldots,g(\bx_n))^T$ when distribution of this vector will be considered and $\bof_n$ when this vector is conditioned upon.

\subsection{Gaussian derivative process}
\label{subsec:gp_derivative}
For $\bx=(x_1,\ldots,x_d)$ and $\by=(y_1,\ldots,y_d)$, let 
us assume that the second order mixed partial derivatives 
$$\frac{\partial^2c(\bx^*,\by^*)}{\partial x_i\partial y_i}=\frac{\partial^2c(\bx,\by)}{\partial x_i\partial y_i}\bigg |_{\bx=\bx^*,\by=\by^*}$$ 
are Lipschitz continuous on $\mathcal X\times\mathcal X$ for $i=1,\ldots,d$. 

With the above assumption on the covariance function and with the further assumption that $\mu(\cdot)$ is twice continuously differentiable
with continuous mixed second order partial derivatives, for  
$\bx=(x_1,\ldots,x_d)$, $$g'_i(\bx^*)=\frac{\partial g(\bx^*)}{\partial x_i}=\frac{\partial g(\bx)}{\partial x_i}\bigg |_{\bx=\bx^*},$$ 
corresponding
to the original continuous-path Gaussian process $g(\cdot)$ exists for $i=1,\ldots,d$, for all $\bx^*\in\mathcal X$. Specifically, for $i=1,\ldots,d$,
$g'_i(\cdot)=\partial g(\cdot)/\partial x_i$ is a continuous-path Gaussian process with mean function $\mu'_i(\cdot)=\partial \mu(\cdot)/\partial x_i$ and 
covariance function $\sigma^2\partial^2c(\cdot,\cdot)/\partial x_i\partial y_i$. 

For general details on Gaussian and Gaussian derivative processes, see, for example, \ctn{Adler81}, \ctn{Adler07}.

\subsection{Joint distribution of Gaussian variables and Gaussian derivative variables}
\label{subsec:joint}

Note that, given $\bx^*,\by^*\in\mathcal X$, 
\begin{align}
&Cov\left(g'_i(\bx^*),g'_j(\bx^*)\right)=\sigma^2\frac{\partial^2 c(\bx,\by)}{\partial x_i\partial y_j} \bigg |_{\bx=\bx^*,\by=\bx^*};\label{eq:cov1}\\
&Cov\left(g'_i(\bx^*),g(\by^*)\right)=\sigma^2\frac{\partial c(\bx,\by)}{\partial x_i} \bigg |_{\bx=\bx^*,\by=\by^*}.\label{eq:cov2}
\end{align}	
With the above covariance forms, for given $\bx^*\in\mathcal X$, we have 
\begin{equation}
	\left(g'_1(\bx^*),\ldots,g'_d(\bx^*),g(\bx_1),\ldots,g(\bx_n)\right)^T
	\sim N_{d+n}\left(\bnu_{d+n},\sigma^2\bSigma^{\overline{d+n}\times\overline{d+n}}\right),
	\label{eq:mvn1}
\end{equation}
that is, the vector on the left hand side of (\ref{eq:mvn1}) has the $(d+n)$-variate normal distribution 
with mean vector $\bnu_{d+n}$ and covariance matrix $\sigma^2\bSigma^{\overline{d+n}\times\overline{d+n}}$. Here
\begin{equation}
	\bnu_{d+n}(\bx^*)=\left({\bmu'_{d}(\bx^*)}^T,\bmu^T_{n}\right)^T,
	\label{eq:bmu}
\end{equation}
where $\bmu'_{d}(\bx^*)=\left(\partial\mu(\bx^*)/\partial x_1,\ldots,\partial\mu(\bx^*)/\partial x_d\right)^T$ with $\partial\mu(\bx^*)/\partial x_i=
\frac{\partial\mu(\bx)}{\partial x_i}\bigg |_{\bx=\bx^*}$ for $i=1,\ldots,d$, and
$\bmu_{n}=\left(\mu(\bx_1),\ldots,\mu(\bx_n)\right)^T$.
Also,
\begin{equation}
	\bSigma^{\overline{d+n}\times\overline{d+n}}(\bx^*)=\begin{pmatrix}\bSigma^{d\times d}_{11}(\bx^*) & \bSigma^{d\times n}_{12}(\bx^*)\\ 
	\bSigma^{n\times d}_{21}(\bx^*) & \bSigma^{n\times n}_{22}\end{pmatrix},
	\label{eq:bSigma}
\end{equation}
where $\bSigma^{d\times d}_{11}$ is the $d$-th order correlation matrix with $(i,j)$-th element $\sigma^{-2}Cov\left(g'_i(\bx^*),g'_j(\bx^*)\right)$ 
where the covariance term is given by (\ref{eq:cov1}), 
$\bSigma^{d\times n}_{12}(\bx^*)$ is the $d\times n$ matrix with $(i,j)$-th element $\sigma^{-2}Cov\left(g'_i(\bx^*),g(\bx_j)\right)$ 
where the covariance term is of the same form as (\ref{eq:cov2}) with
$\by^*$ replaced by $\bx_j$, $\bSigma^{n\times d}_{21}(\bx^*)$ is the transpose of $\bSigma^{d\times n}_{12}(\bx^*)$, and $\bSigma^{n\times n}_{22}$ is the $n\times n$ matrix
with $(i,j)$-th element $c(\bx_i,\bx_j)$, the correlation between $g(\bx_i)$ and $g(\bx_j)$. 
In our examples, we shall consider the squared exponential correlation function having the form
\begin{equation}
	c(\bx,\by)=\exp\left(-\frac{1}{2}(\bx-\by)^T\bLambda^{-1}(\bx-\by)\right),
	\label{eq:exp_corr1}
\end{equation}
for $\bx,\by\in\mathcal X$, where $\bLambda$ is a $d\times d$ diagonal matrix with positive diagonal elements $\lambda_i$; $i=1,\ldots,d$.
It follows that $\bSigma_{11}=\bLambda^{-1}$ and
for $j=1,\ldots,n$, the $j$-th column of $\bSigma_{12}(\bx^*)$ is $-\bLambda^{-1}(\bx^*-\bx_j)c(\bx^*,\bx_j)$.

\subsection{Posterior distribution of the Gaussian derivatives given the data and parameters}
\label{subsec:posterior1}
From (\ref{eq:mvn1}) it follows that the joint posterior distribution of $\bg'(\bx^*)=\left(g'_1(\bx^*),\ldots,g'_d(\bx^*)\right)^T$ 
given $\bx^*$, $\bD_n$, $\sigma^2$ and other parameters $\btheta$, is the following $d$-variate normal distribution:
\begin{equation}
	\pi\left(\bg'(\bx^*)|\sigma^2,\btheta,\bD_n\right)\equiv N_d\left(\tilde\bmu(\bx^*),\sigma^2\tilde\bSigma(\bx^*)\right),
	\label{eq:postpred1}
\end{equation}
where 
\begin{align}
	\tilde\bmu(\bx^*)&=\bmu'_d(\bx^*)+\bSigma_{12}(\bx^*)\bSigma^{-1}_{22}\left(\bof_n-\bmu_n\right);\label{eq:postmean1}\\
	\tilde\bSigma(\bx^*)&=\bSigma_{11}(\bx^*)-\bSigma_{12}(\bx^*)\bSigma^{-1}_{22}\bSigma_{21}(\bx^*).\label{eq:postvar1}
\end{align}

\subsection{Prior and posterior distributions of the parameters}
\label{subsec:prior_posterior_parameters}
In our examples we assume that $\mu(\bx)=\bh(\bx)^T\bbeta$, where $\bh(\bx)^T=(1,x_1,\ldots,x_d)$ and $\bbeta=(\beta_0,\beta_1,\ldots,\beta_d)^T\in \mathbb R^d$.
Let $\bH^{n\times\overline{d+1}}=\left(\bh(\bx_1),\ldots,\bh(\bx_n)\right)^T$.

\subsubsection{Priors for the parameters}
\label{subsubsec:prior1}
We assume that {\it a priori} 
\begin{equation}
	\pi(\bbeta|\sigma^2)\equiv N_{d+1}\left(\bbeta_0,\sigma^2\bSigma_0\right),
	\label{eq:prior1}
\end{equation}
where $\bbeta_0$ is the mean vector and $\bSigma_0$ is the positive definite covariance matrix, and 
\begin{equation}
\pi\left(\sigma^{-2}\right)\equiv \mathcal G(a,b), 
	\label{eq:prior2}
\end{equation}
the gamma distribution with mean $a/b$ and variance $a/b^2$, where $a,b>0$. 

\subsubsection{Posteriors for the parameters}
\label{subsubsec:posterior1}
Let us first obtain the posterior distribution of $\sigma^{-2}$ given $\bD_n$.
Note that
\begin{align}
	&\pi(\sigma^{-2}|\bD_n)\propto\pi\left(\sigma^{-2}\right)\pi\left(\bg_n|\sigma^{-2}\right)\notag\\
	&\qquad = \pi\left(\sigma^{-2}\right)\int\pi\left(\bg_n|\sigma^{-2},\bbeta\right)\pi\left(\bbeta|\sigma^{2}\right)d\bbeta.
	\label{eq:pos1}
\end{align}
To obtain $\pi\left(\bg_n|\sigma^{-2}\right)=\int\pi\left(\bg_n|\sigma^{-2},\bbeta\right)\pi\left(\bbeta|\sigma^{2}\right)d\bbeta$ note that
\begin{equation}
\pi\left(\bg_n|\sigma^{-2},\bbeta\right)\equiv N_n\left(\bH\bbeta,\sigma^2\bSigma_{22}\right)
	\label{eq:pos_D}
\end{equation}
and since 
$\pi\left(\bbeta|\sigma^{2}\right)$ has the normal distribution (\ref{eq:prior1}), it follows that
\begin{equation}
	\pi\left(\bg_n|\sigma^{-2}\right)\equiv N_n\left(\bH\bbeta_0,\sigma^2\left(\bH\bSigma_0\bH^T+\bSigma_{22}\right)\right).
	\label{eq:pos2}
\end{equation}
Combining (\ref{eq:pos2}) with (\ref{eq:pos1}) and (\ref{eq:prior2}) it follows that
\begin{equation}
	\pi(\sigma^{-2}|\bD_n)\equiv \mathcal G\left(a+\frac{d}{2},b+\frac{1}{2}\left(\bof_n-\bH\bbeta_0\right)^T\left(\bH\bSigma_0\bH^T+\bSigma_{22}\right)^{-1}
	\left(\bof_n-\bH\bbeta_0\right)\right).
	\label{eq:pos3}
\end{equation}
Also, combining (\ref{eq:pos_D}) and (\ref{eq:prior1}) it is easy to see that
\begin{equation}
	\pi\left(\bbeta|\bD_n,\sigma^2\right)\equiv 
	N_{d+1}\left(\left(\bH^T\bSigma^{-1}_{22}\bH+\bSigma^{-1}_0\right)^{-1}\left(\bH^T\bSigma^{-1}_{22}\bof_n+\bSigma^{-1}_0\bbeta_0\right),
	\sigma^2\left(\bH^T\bSigma^{-1}_{22}\bH+\bSigma^{-1}_0\right)^{-1}\right).
	\label{eq:pos4}
\end{equation}

\subsection{Marginal posterior distribution of the derivative process}
\label{subsec:marginal_posterior_derivative}
Now, from (\ref{eq:postpred1}) it follows that  
\begin{equation}
	\pi\left(\bg'(\bx^*)|\sigma^2,\bbeta,\bD_n\right)\equiv N_d\left(\bA\bbeta+\bSigma_{12}(\bx^*)\bSigma^{-1}_{22}(\bof_n-\bH\bbeta),\sigma^2\tilde\bSigma(\bx^*)\right),
	\label{eq:postpred2}
\end{equation}
where $\bA^{d\times\overline{d+1}}
=\begin{pmatrix}\bzero^{d\times 1} & \mathbb I_d\end{pmatrix}$. Here $\bzero^{d\times 1}$ is the $d$-dimensional null vector and $\mathbb I_d$ is the identity
matrix of order $d$. Integrating (\ref{eq:postpred2}) with respect to (\ref{eq:pos4}) we obtain
\begin{equation}
	\pi\left(\bg'(\bx^*)|\sigma^2,\bD_n\right)\equiv N_d\left(\hat\bmu'(\bx^*),\sigma^2\hat\bSigma(\bx^*)\right),
	\label{eq:postpred3}
\end{equation}
where
$\hat\bmu'(\bx^*)$ and $\hat\bSigma(\bx^*)$ are given by
\begin{align}
	&\hat\bmu'(\bx^*)=\bA\hat\bbeta+\bSigma_{12}(\bx^*)\bSigma^{-1}_{22}(\bof_n-\bH\hat\bbeta);\label{eq:postpred_mean1}\\
	&\hat\bSigma(\bx^*)=
	\tilde\bSigma(\bx^*)+\left(\bA-\bSigma_{12}(\bx^*)\bSigma^{-1}_{22}\bH\right)\left(\bH^T\bSigma^{-1}_{22}\bH+\bSigma^{-1}_0\right)^{-1}
	\left(\bA-\bSigma_{12}(\bx^*)\bSigma^{-1}_{22}\bH\right)^T,
	\label{eq:postpred_var1}
\end{align}
with
\begin{equation}
	\hat\bbeta=\left(\bH^T\bSigma^{-1}_{22}\bH+\bSigma^{-1}_0\right)^{-1}\left(\bH^T\bSigma^{-1}_{22}\bof_n+\bSigma^{-1}_0\bbeta_0\right).
	\label{eq:hat_beta}
\end{equation}
Integrating (\ref{eq:postpred3}) with respect to (\ref{eq:pos3}) we obtain
\begin{align}
	&\pi\left(\bg'(\bx^*)|\bD_n\right)\equiv
	t_d\left(\hat\bmu'(\bx^*),
	\frac{\left(a+\frac{d}{2}\right)\hat\bSigma(\bx^*)^{-1}}{b+\frac{1}{2}\left(\bof_n-\bH\bbeta_0\right)^T\left(\bH\bSigma_0\bH^T+\bSigma_{22}\right)^{-1}
	\left(\bof_n-\bH\bbeta_0\right)},2\left(a+\frac{d}{2}\right)\right),
	\label{eq:postpred4}
\end{align}
where for any $d$-dimensional vector $\bmu$, $d$-th order covariance matrix $\bSigma$, and $\alpha>0$, $t_d\left(\bmu,\bSigma^{-1},\alpha\right)$
is a $d$-variate Student's $t$ distribution with density at $\bx\in\mathbb R^d$ given by
$$t_d\left(\bx:\bmu,\bSigma^{-1},\alpha\right)=C\left[1+\alpha^{-1}(\bx-\bmu)^T\bSigma^{-1}(\bx-\bmu)\right]^{-\frac{(\alpha+d)}{2}},$$
where $$C=\frac{\Gamma\left(\frac{\alpha+d}{2}\right)}{\Gamma\left(\frac{\alpha}{2}\right)\left(\alpha\pi\right)^{\frac{d}{2}}},$$
with $\Gamma(\cdot)$ denoting the gamma function.

\section{Posterior distribution of random optima corresponding to the posterior derivative process}
\label{sec:posterior_optima}
	From (\ref{eq:postpred4}) we obtain the following posterior density at $\bg'(\bx^*)=\bzero$: 
\begin{align}
	\pi\left(\bg'(\bx^*)=\bzero|\bD_n\right)
	\propto\left[1+\frac{\hat\bmu'(\bx^*)^T\hat\bSigma(\bx^*)^{-1}\hat\bmu'(\bx^*)}
	{2b+\left(\bof_n-\bH\bbeta_0\right)^T\left(\bH\bSigma_0\bH^T+\bSigma_{22}\right)^{-1}\left(\bof_n-\bH\bbeta_0\right)}
	\right]^{-(a+d)}.
	\label{eq:postpred5}
\end{align}
Now, with prior $\pi(\bx^*)$ on $\bx^*$, the posterior of $\bx^*$, given $\bg'(\bx^*)$ and $\bD_n$ can be obtained as follows:
\begin{align}
	&\pi(\bx^*|\bg'(\bx^*),\bD_n)\propto\pi(\bx^*)\pi(\bg'(\bx^*),\bg_n|\bx^*)\notag\\
	&\qquad=\pi(\bx^*)\pi(\bg'(\bx^*)|\bD_n,\bx^*)\pi(\bg_n|\bx^*)\notag\\
	&\qquad=\pi(\bx^*)\pi(\bg'(\bx^*)|\bD_n,\bx^*)\pi(\bg_n)\notag\\
	&\qquad\propto\pi(\bx^*)\pi(\bg'(\bx^*)|\bD_n,\bx^*).
	\label{eq:inv1}
\end{align}
In the second step of (\ref{eq:inv1}), $\pi(\bg_n|\bx^*)$ is the marginal distribution of $\bg_n$, integrated over the parameters. Since this does
not depend upon $\bx^*$, we denoted this as $\pi(\bg_n)$ in the third step of (\ref{eq:inv1}). From (\ref{eq:inv1}) it then follows that
\begin{equation}
	\pi(\bx^*|\bg'(\bx^*)=\bzero,\bD_n)\propto\pi(\bx^*)\pi(\bg'(\bx^*)=\bzero|\bD_n,\bx^*).
	\label{eq:inv2}
\end{equation}
Now, $\pi(\bg'(\bx^*)=\bzero|\bD_n,\bx^*)$ will also depend upon parameters of the covariance function, which will be unknown generally. It is legitimate to estimate
them using the maximum likelihood estimation method (see, for example, \ctn{Santner03}) and treat them as fixed. 

The formula (\ref{eq:inv2}) holds for any prior $\pi(\bx^*)$ for $\bx^*$. However, we shall consider a uniform prior on $\mathcal X$ constrained 
by the first and second derivatives of $f(\cdot)$. The details are presented below.

\subsection{Prior for $\bx^*$}
\label{subsec:prior}

Without loss of generality, let us assume that our objective is to obtain the minima of the function $f(\cdot)$ on $\mathcal X$.
For $i,j=1,\ldots,d$, let $f''_{ij}(\bx^*)=\frac{\partial^2f(\bx)}{\partial x_i\partial x_j}\bigg |_{\bx=\bx^*}$ denote the second order partial derivatives
of the objective function $f(\cdot)$ at any $\bx^*\in\mathcal X$. Let $\bSigma''(\bx^*)$ stand for the $d\times d$ 
matrix of such second order partial derivatives at $\bx^*$ with $(i,j)$-th element $f''_{ij}(\bx^*)$. Let $\bSigma''(\bx^*)>0$ denote that $\bSigma''(\bx^*)$
is positive definite.
Then we consider the following prior for $\bx^*$:
\begin{equation}
	\pi(\bx^*)\propto I_{B(\epsilon)}(\bx^*),
	\label{eq:prior_x_star}
\end{equation}
where, for any set $A$ and vector $\bx$, $I_A(\bx)=1$ if $\bx\in A$ and zero otherwise. Also, for any $\bx$ and $\epsilon>0$, 
\begin{equation}
	B(\epsilon)=\mathcal X\cap\left\{\bx:\|\bof'(\bx)\|_d<\epsilon\right\}\cap\left\{\bx:\bSigma''(\bx)>0\right\},
	\label{eq:B}
\end{equation}
where $\|\cdot\|_d$ denotes the Euclidean norm in the $d$-dimensional Euclidean space.

\subsection{Interpretation of the posterior for $\bx^*$ as a pseudo-posterior}
\label{subsec:pseudo}
Before proceeding to the convergence theory, we comment on the nature of the target in (\ref{eq:inv2}). The event $\{\bg'(\bx^*)=\bzero\}$ has probability zero under the continuous distribution of the derivative process. Therefore, the expression $\pi(\bx^*|\bg'(\bx^*)=\bzero,\bD_n)$ is not a conditional probability in the strict Kolmogorov sense. Instead, it should be understood as a density-based target obtained by evaluating the conditional density of the derivative at zero, which serves as a surrogate posterior for the stationary point. This construction is akin to a Gibbs posterior or an energy-based formulation; it is well-defined as a density and leads to a valid sampling scheme. Such pseudo-posteriors have been successfully used in various contexts (see, for example, \ctn{Bissiri16}) and we adopt a similar perspective here.

\section{Almost sure uniform convergence of posterior Gaussian and Gaussian derivative processes}
\label{sec:conv}

	Consider the joint posterior distribution of 
	\begin{equation}
	\pi(g(\cdot),\bg'(\cdot)|\bD_n)=\pi(g(\cdot)|\bD_n)\pi(\bg'(\cdot)|g(\cdot),\bD_n).
		\label{eq:joint1}
	\end{equation}
	Then the marginal posterior $\pi(\bg'(\cdot)|\bD_n)$ is of the same form as (\ref{eq:postpred4}), and the marginal posterior distribution $\pi(g(\cdot)|\bD_n)$ 
	in (\ref{eq:joint1}) corresponds to the $t_1$ process, the form of which is not relevant for our purpose. 
        
	For $n\geq 1$, let $\bX_n$ denote the $n$ input points in $\bD_n$. 
	Note that even after marginalizing out the parameters of the Gaussian process with respect to their posteriors (here $\bbeta$ and $\sigma^2$), 
	the interpolation property of $g(\cdot)$ given $\bD_n$ is preserved. That is, the marginal posterior $\pi(g(\bx^*)|\bD_n)$ gives full posterior mass to $f(\bx^*)$
	if $\bx^*\in\bX_n$.
	
	Let $g_n(\cdot)$ denote any random function associated with any non-null set of the marginalized posterior measure of $g(\cdot)$ given $\bD_n$ 
	(here, the $t_1$ posterior measure). Also, let $\bg'_n(\cdot)$ denote 
	any $d$-dimensional random function associated with any non-null set of the marginalized posterior measure of $\bg'(\cdot)$ given $\bD_n$, 
	the form of which is provided explicitly by (\ref{eq:postpred4}). 
	Theorems \ref{theorem:infill}, \ref{theorem:infill2}, \ref{theorem:theorem1} and \ref{theorem:theorem2} prove almost sure uniform convergence of $g_n(\cdot)$ and
	$\bg'_n(\cdot)$ to $f(\cdot)$ and $\bof'(\cdot)$ respectively, as $n\rightarrow\infty$. In particular, Theorems \ref{theorem:theorem1} and \ref{theorem:theorem2}
	also provide rates of such convergences. 

Before presenting the theorems, we state a set of standing assumptions that will be used throughout this section.

\begin{assumption}[Assumptions for convergence results]
\label{ass:main}
We assume the following:
\begin{itemize}
\item[(A1)] The domain $\mathcal X\subset\mathbb R^d$ is compact and convex.
\item[(A2)] The objective function $f$ is twice continuously differentiable on an open set containing $\mathcal X$, with continuous mixed second-order partial derivatives.
\item[(A3)] The mean function $\mu(\cdot)$ of the Gaussian process prior is twice continuously differentiable with continuous mixed second-order partial derivatives.
\item[(A4)] The correlation function $c(\cdot,\cdot)$ is such that the mixed partial derivatives $\frac{\partial^2 c(\bx,\by)}{\partial x_i\partial y_i}$ exist and are Lipschitz continuous on $\mathcal X\times\mathcal X$ for all $i=1,\ldots,d$. For the higher-order convergence results (Theorems \ref{theorem:theorem1} and \ref{theorem:theorem2}), we additionally require the existence and Lipschitz continuity of $\frac{\partial^4 c(\bx,\by)}{\partial x_i\partial y_i\partial x_j\partial y_j}$ for all $i,j$.
\item[(A5)] The input points $\bX_n$ satisfy a fixed-domain infill asymptotics condition: for any $\bx\in\mathcal X$, there exists a sequence $\bx_n\in\bX_n$ such that 
	\begin{equation}
		\|\bx_n-\bx\|_d\rightarrow 0~\mbox{as}~n\rightarrow\infty, 
		\label{eq:infill_limit}
	\end{equation}
		and moreover points of the form $\bx_n+\bh_n$ with $\bh_n\rightarrow\bzero$ also belong to $\bX_n$.
\item[(A6)] The correlation matrices $\bSigma_{22}$ and their derivatives are uniformly non-degenerate: there exists a constant $c_0>0$ such that the smallest eigenvalue of $\bSigma_{22}$ is bounded below by $c_0$ for all $n$.
\end{itemize}
\end{assumption}
Assumption (A6) is used in two essential places. First, the posterior distributions derived in Sections 
\ref{subsec:posterior1} and \ref{subsec:marginal_posterior_derivative}  
explicitly depend on $\bSigma_{22}^{-1}$; the uniform lower bound on the smallest eigenvalue guarantees that 
$\bSigma_{22}$ is invertible for every $n$ and that its inverse remains numerically stable as $n$ increases. 
Second, in the proofs of Theorems \ref{theorem:infill2} and \ref{theorem:theorem1} we require a uniform bound on 
the second derivatives $g''_{ijn}(\cdot)$ over $n$. Such a bound follows from the uniform boundedness of the conditional 
covariance $\tilde\bSigma(\bx^*)$ in (\ref{eq:postvar1}) and the stability of the posterior of $\bg'(\bx^*)$; both rely 
on $\bSigma_{22}^{-1}$ not becoming arbitrarily large. Consequently, (A6) ensures that the almost sure uniform convergence 
results hold without pathological degeneracy of the design points.

These assumptions are standard in the literature on fixed-domain asymptotics for Gaussian processes (see, for example, \ctn{Stein99}) and are sufficient for the proofs that follow.

\begin{lemma}
	\label{lemma:lemma1}
	Consider a sequence of real-valued continuous functions $\{f_n\}_{n=1}^{\infty}$ on any compact set $\mathcal X$ such that  
	$f_n(\bx)\rightarrow f(\bx)$ for all $\bx\in\mathcal X$, where $f$ is some real-valued continuous function on $\mathcal X$. Then
	$$\underset{n\rightarrow\infty}{\lim}\underset{\bx\in\mathcal X}{\sup}~|f_n(\bx)|<\infty.$$
\end{lemma}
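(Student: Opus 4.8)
The statement to prove is that if $\{f_n\}$ is a sequence of continuous real-valued functions on a compact set $\mathcal{X}$ converging pointwise to a continuous function $f$, then $\limsup_{n\to\infty}\sup_{\bx\in\mathcal X}|f_n(\bx)|$ is finite. Note carefully that this is \emph{not} a uniform convergence claim (pointwise convergence of continuous functions to a continuous limit does not give uniform convergence without extra hypotheses like monotonicity, à la Dini); it only asserts that the sequence of sup-norms $\|f_n\|_\infty$ is eventually bounded. The natural line of attack is by contradiction: suppose $\sup_{\bx\in\mathcal X}|f_n(\bx)|$ fails to be eventually bounded, i.e. there is a subsequence $\{f_{n_k}\}$ and points $\bx_{n_k}\in\mathcal X$ with $|f_{n_k}(\bx_{n_k})|\to\infty$ (each sup is attained since $f_{n_k}$ is continuous on the compact $\mathcal X$). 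By compactness of $\mathcal X$, pass to a further subsequence so that $\bx_{n_k}\to\bx_\ast\in\mathcal X$.

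First I would try to derive a contradiction directly from $\bx_{n_k}\to\bx_\ast$ together with pointwise convergence $f_{n_k}(\bx_\ast)\to f(\bx_\ast)$ and continuity of $f$ at $\bx_\ast$. The obstruction is that pointwise convergence gives control of $f_n$ only at fixed points, not along the moving sequence $\bx_{n_k}$; continuity of each \emph{individual} $f_n$ near $\bx_\ast$ is not uniform across $n$, so one cannot immediately say $f_{n_k}(\bx_{n_k})$ is close to $f_{n_k}(\bx_\ast)$. This is the main obstacle, and the standard way around it is to invoke a Baire category / uniform boundedness argument rather than a naive limiting argument.

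So the cleaner route, which I would adopt, is this. For each $m\in\mathbb{N}$ define the closed set
\[
E_m=\bigcap_{n\ge 1}\{\bx\in\mathcal X:\ |f_n(\bx)|\le m\},
\]
which is closed because each $f_n$ is continuous. Since $f_n(\bx)\to f(\bx)$ for every $\bx$, the sequence $\{f_n(\bx)\}_n$ is bounded for each fixed $\bx$, hence $\bx\in E_m$ for some $m$; therefore $\mathcal X=\bigcup_{m\ge 1}E_m$. Now $\mathcal X$, being a compact metric space, is a complete metric space, so by the Baire category theorem some $E_{m_0}$ has nonempty interior: there is an open ball $U\subseteq E_{m_0}$, on which $|f_n(\bx)|\le m_0$ for \emph{all} $n$ simultaneously. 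The remaining work is to propagate this local uniform bound to all of $\mathcal X$: given any $\bx_0\in\mathcal X$, I would argue (again via compactness, e.g. by covering $\mathcal X$ with finitely many translates/scalings of the good ball, or by noting the limit function $f$ is bounded on compact $\mathcal X$ and combining with the local bound) that the sup-norms $\|f_n\|_\infty$ are uniformly — not merely eventually — bounded. Even the weaker, purely "eventual" conclusion stated in the lemma follows a fortiori. Finally I would remark that boundedness of $\|f_n\|_\infty$ at each $n$ is automatic from continuity plus compactness, so the content is entirely in the uniformity over $n$, which is exactly what Baire delivers; I expect that step — correctly extracting a globally uniform bound from the local one produced by Baire — to require the only genuine care in the argument.
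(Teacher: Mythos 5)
Your Baire-category reduction is sound as far as it goes, but the propagation step you defer to the end is not merely delicate --- it cannot be carried out from the stated hypotheses, because the statement as written is false. Take $\mathcal X=[0,1]$ and let $f_n$ be the continuous piecewise-linear ``spike'' that vanishes outside $(1/(n+1),\,1/n)$ and has height $n$ at the midpoint of that interval. Each $f_n$ is continuous, $f_n(x)\rightarrow 0$ for every $x\in[0,1]$ (at $x=0$ every $f_n$ vanishes; for $x>0$ the support eventually lies entirely to the left of $x$), the limit $f\equiv 0$ is continuous, and yet $\sup_{x\in[0,1]}|f_n(x)|=n\rightarrow\infty$. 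Your Baire argument does deliver a ball on which all $f_n$ are simultaneously bounded (here, any ball staying away from $0$), but neither of your suggested continuations can globalize it: translates or scalings of the good ball need not be contained in any $E_m$ (the bad behaviour simply lives elsewhere, and the sets $E_m$ have no translation structure), and boundedness of $f$ on the compact $\mathcal X$ gives no control of $f_n-f$ along points that move with $n$ --- that is precisely the obstruction you yourself identified at the start. So the step you flagged as requiring ``the only genuine care'' is a genuine gap, and it cannot be closed without an additional hypothesis such as equicontinuity of $\{f_n\}$ (or monotone convergence as in Dini's theorem, or a uniform bound on derivatives), under which one in fact gets uniform convergence and hence the desired bound.

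For comparison, the paper's own proof proceeds by a direct neighbourhood-plus-finite-subcover argument: around each point it asserts a single neighbourhood on which $|f_n-f|<\epsilon_1$ for all sufficiently large $n$, and then extracts a finite subcover of $\mathcal X$. That neighbourhood step tacitly assumes the moduli of continuity of the $f_n$ do not degrade with $n$, i.e.\ exactly the equicontinuity that your globalization step is also missing; the two arguments founder on the same ingredient, yours at the global stage and the paper's at the local stage. Either route becomes correct if the lemma is restated with an equicontinuity (or uniform Lipschitz) hypothesis on $\{f_n\}$, which is the form in which it is actually needed for the convergence theorems that invoke it.
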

\begin{proof}
	Note that $f_n$, for $n\geq 1$, and $f$ are actually uniformly continuous since $\mathcal X$ is compact.
Now let us first consider an arbitrary $\bx_1\in\mathcal X$. Then due to pointwise convergence of $f_n$ to $f$, for any $\epsilon>0$, 
	there exists $n_1\geq 1$ such that for $n\geq n_1$, $|f_n(\bx_1)-f(\bx_1)|<\epsilon$. Moreover, due to uniform continuity of $f_n$ and $f$, 
	there exists an open neighborhood
	$\mathcal N(\bx_1)$ of $\bx_1$ such that $|f_n(\bx)-f(\bx)|<\epsilon_1$ for all $\bx\in\mathcal N(\bx_1)$, where $\epsilon_1$ is some positive finite constant.
	Since $f$ is continuous on the compact set $\mathcal X$, it is uniformly bounded. Hence, $\underset{\bx\in\mathcal N(\bx_1)}{\sup}~|f_n(\bx)|<M_1$
	for all $n\geq n_1$, where $M_1$ is some positive finite constant.

	Now consider another point $\bx_2\in\mathcal X\backslash\mathcal N(\bx_1)$. Then similar argument shows that 
	$\underset{\bx\in\mathcal N(\bx_2)}{\sup}~|f_n(\bx)|<M_2$ for all $n\geq n_2\geq n_1$, 
	where $\mathcal N(\bx_2)$ is some appropriate open neighborhood of $\bx_2$ and $M_2$ is some
	positive finite constant.

	Thus, starting with $\mathcal N(\bx_1)$ and the associated bound $\underset{\bx\in\mathcal N(\bx_1)}{\sup}~|f_n(\bx)|<M_1$,
	continuing the procedure for $i\geq 2$, we can construct neighborhoods $\mathcal N(\bx_i)$ with $\bx_i\in\mathcal X\backslash\cup_{j=1}^{i-1}\mathcal N(\bx_j)$ 
	and bounds $M_i$ such that for all $n\geq n_i\geq n_{i-1}\geq \cdots\geq n_2\geq n_1$, 
	$\underset{\bx\in\mathcal N(\bx_i)}{\sup}~|f_n(\bx)|<M_i$. 
	Note that $\mathcal X\subseteq \cup_{i=1}^{\infty}\mathcal N(\bx_i)$. That is, the set of neighborhoods $\{\mathcal N(\bx_i):i=1,2,\ldots\}$
	constitutes an open cover for $\mathcal X$. Since $\mathcal X$ is compact, there exists a finite sub-cover for $\mathcal X$, say,
	$\{\mathcal N(\bx_{i_j}):j=1,2,\ldots,K\}$, where $K$ is finite. Now, by our construction, for $n\geq n_{i_j}$, 
	$\underset{\bx\in\mathcal N(\bx_{i_j})}{\sup}~|f_n(\bx)|<M_{i_j}$,
	for $j=1,\ldots,K$. Let $n_0=\max\{n_{i_j}:j=1,\ldots,K\}$ and $M=\max\{M_{i_j}:j=1,\ldots,K\}$. 
	Then for all $n\geq n_0$, $\underset{\bx\in\mathcal X}{\sup}~|f_n(\bx)|<M<\infty$.
\end{proof}

\begin{theorem}
\label{theorem:infill}
Consider a fixed-domain infill asymptotics framework satisfying Assumption (A5). Under Assumptions (A1)–(A4) and (A6), for almost all sequences $\{g_n(\cdot)\}_{n=1}^{\infty}$,
	\begin{align}
		&\underset{\bx\in\mathcal X}{\sup}~|g_n(\bx)-f(\bx)|\rightarrow 0,~\mbox{as}~n\rightarrow\infty.
		\label{eq:infill1}
	\end{align}
\end{theorem}
\begin{proof}
%
	Consider any $\bx\in\mathcal X$. Then there exists $\bx_n\in\bX_n$ for $n\geq 1$ satisfying (\ref{eq:infill_limit}). Now by Taylor's series expansion up to the 
	first order,
	\begin{equation}
	g_n(\bx_n)=g_n(\bx)+(\bx_n-\bx)^T\bg'_n(\bc_n), 
		\label{eq:taylor1}
	\end{equation}
	where $\bc_n$ lies on the line joining $\bx$ and $\bx_n-\bx$.

	Now, for $i=1,\ldots,d$, consider the $i$-th partial derivative $g'_{in}(\cdot)$ of $g_n(\cdot)$. With any sequence $h_{in}\rightarrow 0$ as $n\rightarrow\infty$, 
	we have
	\begin{equation}
		\frac{g_n(x_{1n},\ldots,x_{i-1,n},x_{in}+h_{in},x_{i+1,n},\ldots,x_{dn})-g_n(\bx_n)}{h_{in}}=g'_{in}(\bx^*_n),
		\label{eq:der1}
	\end{equation}
	where $\bx^*_n=(x_{1n},\ldots,x_{i-1,n},x^*_{in},x_{i+1,n},\ldots,x_{dn})$; here $x^*_{in}$ lies between $x_{in}$ and $x_{in}+h_{in}$. 
	Since $(x_{1n},\ldots,x_{i-1,n},x_{in}+h_{in},x_{i+1,n},\ldots,x_{dn})^T\in\bX_n$ and $\bx_n\in\bX_n$, 
	$g_n(x_{1n},\ldots,x_{i-1,n},x_{in}+h_{in},x_{i+1,n},\ldots,x_{dn})=f(x_{1n},\ldots,x_{i-1,n},x_{in}+h_{in},x_{i+1,n},\ldots,x_{dn})$
	and $g_n(\bx_n)=f(\bx_n)$, almost surely. Hence, from (\ref{eq:der1}) it follows that 
	\begin{equation}
		g'_{in}(\bx^*_n)=f'_i(\bz_n),~\mbox{almost surely}, 
	\label{eq:der2}
        \end{equation}
	with
	$\bz_n=(x_{1n},\ldots,x_{i-1,n},z_{in},x_{i+1,n},\ldots,x_{dn})$, where $z_{in}$ lies between $x_{in}$ and $x_{in}+h_{in}$.
	Clearly, $\bz_n\rightarrow\bx$, as $n\rightarrow\infty$. Hence, taking limits of both sides of (\ref{eq:der2}) as $n\rightarrow\infty$, and using
	continuity of $f'_i(\cdot)$, yields
	\begin{equation}
		\underset{n\rightarrow\infty}{\lim}~g'_{in}(\bx^*_n)=f'_i(\bx),~\mbox{almost surely}. 
	\label{eq:der3}
        \end{equation}
	Now, by the hypothesis of Lipschitz continuity of the second order mixed partial derivatives of the correlation function ensures existence and 
	sample path continuity of the partial derivatives $g'_{in}(\cdot)$, for $i=1,\ldots,d$, for any $n\geq 1$. 
	Since $\mathcal X$ is compact, $g'_{in}(\cdot)$ are uniformly continuous on $\mathcal X$, for $i=1,\ldots,d$, for any $n\geq 1$.
	Uniform continuity of $g'_{in}(\cdot)$ for all $n\geq 1$ implies that for any $\epsilon>0$,
	$|g'_{in}(\bx^*_n)-g'_{in}(\bx)|<\epsilon$, whenever $\|\bx^*_n-\bx\|_d<\delta$, where $\delta~(>0)$ depends upon $\epsilon$ only.
	Now, since $\bx^*_n\rightarrow\bx$ as $n\rightarrow\infty$, there exists $n_0~(\geq 1)$ depending upon $\delta$ such that 
	$\|\bx^*_n-\bx\|_d<\delta$ for $n\geq n_0$. Further, using (\ref{eq:der3}) we obtain for any $\bx\in\mathcal X$, the following: 
	\begin{equation}
         \underset{n\rightarrow\infty}{\lim}~g'_{in}(\bx)=\underset{n\rightarrow\infty}{\lim}~g'_{in}(\bx^*_n)+\underset{n\rightarrow\infty}{\lim}~
		(g'_{in}(\bx)-g'_{in}(\bx^*_n))=f'_i(\bx),~\mbox{almost surely}.
		\label{eq:der4}
	\end{equation}
	That is, $g'_{in}(\cdot)$ converges pointwise to $f'_i(\cdot)$ almost surely, as $n\rightarrow\infty$. Moreover, $g'_{in}(\cdot)$ is almost surely continuous
	on $\mathcal X$ for all $n\geq 1$ and $f'_i(\cdot)$ is continuous on $\mathcal X$. Since $\mathcal X$ is compact, we invoke Lemma \ref{lemma:lemma1}
	to conclude that there exists a positive, finite constant $M$ depending upon $f'_i(\cdot)$; $i=1,\ldots,d$ such that
	\begin{equation}
		\underset{n\rightarrow\infty}{\lim}~\underset{\bx\in\mathcal X}{\sup}~|g'_{in}(\bx)|<M,~\mbox{almost surely},~\mbox{for}~i=1,\ldots,d.
		\label{eq:unibound1}
	\end{equation}
	Hence, using the Cauchy-Schwartz inequality in (\ref{eq:taylor1}), boundedness of the partial derivatives $g'_{in}(\cdot)$ for $i=1,\ldots,d$
	for large enough $n$ and (\ref{eq:infill_limit}), we obtain
	\begin{equation}
	\left|g_n(\bx_n)-g_n(\bx)\right|=|(\bx_n-\bx)^T\bg'_n(\bc_n)|\leq \|\bx_n-\bx\|_d\times\|\bg'_n(\bc_n)\|_d\rightarrow 0,~\mbox{as}~n\rightarrow\infty.
		\label{eq:lim1}
	\end{equation}
	Hence, using (\ref{eq:lim1}) and continuity of $f(\cdot)$ we obtain, for any $\bx\in\mathcal X$,
	\begin{equation}
        \underset{n\rightarrow\infty}{\lim}~g_n(\bx)=\underset{n\rightarrow\infty}{\lim}~g_n(\bx_n)=\underset{n\rightarrow\infty}{\lim}~f(\bx_n)
		=f(\underset{n\rightarrow\infty}{\lim}~\bx_n)=f(\bx),
		\label{eq:pointwise}
	\end{equation}
	proving pointwise convergence of $g_n(\cdot)$ to $f(\cdot)$.

	Thus, we have shown that $g_n(\cdot)$ converges pointwise to $f(\cdot)$ on $\mathcal X$ almost surely, as $n\rightarrow\infty$ (equation (\ref{eq:pointwise})), 
	and also that the partial derivatives of $g_n(\cdot)$ are uniformly bounded in the limit almost surely (equation (\ref{eq:unibound1})). The latter also implies
	that $g_n(\cdot)$ is almost surely Lipschitz continuous on $\mathcal X$.
	Since $\mathcal X$ is compact, by the stochastic Arzel\`a--Ascoli lemma (see, for example, Theorem 1.6.6 of \ctn{vanDerVaart96}), which states that if a sequence of random continuous functions is pointwise convergent and has uniformly bounded derivatives (hence equicontinuous and tight), then uniform convergence follows, we conclude that (\ref{eq:infill1}) holds.
\end{proof}

\begin{theorem}
\label{theorem:infill2}
	Consider the same setup as in Theorem \ref{theorem:infill} with the additional smoothness condition on the correlation function (fourth-order mixed derivatives Lipschitz). Then, for almost all sequences $\{\bg^{\prime}_n(\cdot)\}_{n=1}^\infty$, 
	\begin{align}
		&\underset{\bx\in\mathcal X}{\sup}~\|\bg^{\prime}_n(\bx)-\bof^{\prime}(\bx)\|_d\rightarrow 0,~\mbox{as}~n\rightarrow\infty.
		\label{eq:infill2}
	\end{align}
\end{theorem}
\begin{proof}
Note that for $i=1,\ldots,d$, pointwise convergence of $g^{\prime}_{in}(\cdot)$ to $f^{\prime}_i(\cdot)$ as $n\rightarrow\infty$, 
is already shown by (\ref{eq:der4}), in the proof of Theorem \ref{theorem:infill}. 
Hence, if we can show that for $i,j=1,\ldots,d$, the absolute values of the second order partial derivatives $|g^{\prime\prime}_{ijn}(\cdot)|$ 
are uniformly bounded on $\mathcal X$ as $n\rightarrow\infty$, 
then this would imply that  $g^{\prime}_{in}(\cdot)$ are almost surely Lipschitz continuous on $\mathcal X$ for large enough $n$. Since $\mathcal X$ is compact,
this would then imply by the stochastic Arzel\`a–Ascoli lemma (Theorem~1.6.6 of \ctn{vanDerVaart96}) that $\underset{\bx\in\mathcal X}{\sup}~|g^{\prime}_{in}(\bx)-f^{\prime}_i(\bx) |\rightarrow 0$, almost surely,
s $n\rightarrow\infty$, for $i=1,\ldots,d$, which is equivalent to (\ref{eq:infill2}).
Hence, in the rest of the proof, we show that $\underset{n\rightarrow\infty}{\lim}\underset{\bx\in\mathcal X}{\sup}~|g^{\prime\prime}_{ijn}(\bx)|<\infty$.
The argument is given for $i=j$; mixed partials follow similarly.
%

Fix $\mathbf{x}\in\mathcal{X}$. By the infill design condition (A5) there exists a sequence $\mathbf{x}_n\in X_n$ (the set of design points) such that $\mathbf{x}_n\to\mathbf{x}$. Choose a scalar $h_n>0$ with $h_n\to0$ and such that $\mathbf{x}_n\pm h_n\mathbf{e}_i\in X_n$, where $\mathbf{e}_i$ is the $i$‑th standard basis vector. This is possible because (A5) requires points of the form $\mathbf{x}_n+\mathbf{h}_n$ with $\mathbf{h}_n\to\mathbf{0}$ to belong to $X_n$.

Since $f$ is twice continuously differentiable (Assumption A2), Taylor's theorem with Lagrange remainder gives
\begin{align}
	f(\mathbf{x}_n+h_n\mathbf{e}_i) &= f(\mathbf{x}_n) + f^{\prime}_i(\mathbf{x}_n)h_n + \tfrac12 f^{\prime\prime}_{ii}(\mathbf{x}_n+\theta_{1n}h_n\mathbf{e}_i)h_n^2,\\
	f(\mathbf{x}_n-h_n\mathbf{e}_i) &= f(\mathbf{x}_n) - f^{\prime}_i(\mathbf{x}_n)h_n + \tfrac12 f^{\prime\prime}_{ii}(\mathbf{x}_n-\theta_{2n}h_n\mathbf{e}_i)h_n^2,
\end{align}
with $\theta_{1n},\theta_{2n}\in(0,1)$. Adding the two equalities and subtracting $2f(\mathbf{x}_n)$ yields
\[
\frac{f(\mathbf{x}_n+h_n\mathbf{e}_i)-2f(\mathbf{x}_n)+f(\mathbf{x}_n-h_n\mathbf{e}_i)}{h_n^2}
	= \tfrac12\bigl[f^{\prime\prime}_{ii}(\mathbf{x}_n+\theta_{1n}h_n\mathbf{e}_i)+f^{\prime\prime}_{ii}(\mathbf{x}_n-\theta_{2n}h_n\mathbf{e}_i)\bigr].
\]

	Because $f^{\prime\prime}_{ii}$ is Lipschitz continuous on the compact set $\mathcal{X}$ (Assumption A4), there exists a constant $L_f<\infty$ such that 
	$|f^{\prime\prime}_{ii}(\mathbf{y})-f^{\prime\prime}_{ii}(\mathbf{z})|\le L_f\|\mathbf{y}-\mathbf{z}\|_d$ for all $\mathbf{y},\mathbf{z}\in\mathcal{X}$. Consequently,
\[
	\Bigl|\tfrac12\bigl[f^{\prime\prime}_{ii}(\mathbf{x}_n+\theta_{1n}h_n\mathbf{e}_i)+f^{\prime\prime}_{ii}(\mathbf{x}_n-\theta_{2n}h_n\mathbf{e}_i)\bigr] 
	- f^{\prime\prime}_{ii}(\mathbf{x})\Bigr|
\le \tfrac{L_f}{2}\bigl(\|\mathbf{x}_n+\theta_{1n}h_n\mathbf{e}_i-\mathbf{x}\|_d+\|\mathbf{x}_n-\theta_{2n}h_n\mathbf{e}_i-\mathbf{x}\|_d\bigr).
\]
Since $\mathbf{x}_n\to\mathbf{x}$ and $h_n\to0$, the right‑hand side is bounded by $C_f h_n$ for some constant $C_f$ (depending only on $L_f$ and the rate of convergence of $\mathbf{x}_n$). Thus we obtain the remainder bound
	\begin{equation}
	\Bigl|\frac{f(\mathbf{x}_n+h_n\mathbf{e}_i)-2f(\mathbf{x}_n)+f(\mathbf{x}_n-h_n\mathbf{e}_i)}{h_n^2} - f^{\prime\prime}_{ii}(\mathbf{x})\Bigr| \le C_f h_n. 
	\label{eq:eq1}
	\end{equation}

The random function $g_n$ is almost surely twice continuously differentiable because the covariance kernel satisfies the Lipschitz condition (A4) and the mean function is twice differentiable (A3). Moreover, the interpolation property of the posterior gives $g_n(\mathbf{x}_n\pm h_n\mathbf{e}_i)=f(\mathbf{x}_n\pm h_n\mathbf{e}_i)$ and $g_n(\mathbf{x}_n)=f(\mathbf{x}_n)$ almost surely, since these points belong to $X_n$. Repeating the Taylor expansion for $g_n$ yields
\[
\frac{g_n(\mathbf{x}_n+h_n\mathbf{e}_i)-2g_n(\mathbf{x}_n)+g_n(\mathbf{x}_n-h_n\mathbf{e}_i)}{h_n^2}
	= \tfrac12\bigl[g^{\prime\prime}_{iin}(\mathbf{x}_n+\tilde\theta_{1n}h_n\mathbf{e}_i)+g^{\prime\prime}_{iin}(\mathbf{x}_n-\tilde\theta_{2n}h_n\mathbf{e}_i)\bigr],
\]
	with $\tilde\theta_{1n},\tilde\theta_{2n}\in(0,1)$. From the proof of uniform boundedness of second derivatives (see the discussion preceding this theorem) we know that $\sup_n\sup_{\mathbf{y}\in\mathcal{X}}|g^{\prime\prime}_{iin}(\mathbf{y})|<\infty$ almost surely, and the sample paths are equicontinuous. Hence there exists a constant $C_g$ (independent of $n$) such that
	\begin{equation}
	\Bigl|\tfrac12\bigl[g^{\prime\prime}_{iin}(\mathbf{x}_n+\tilde\theta_{1n}h_n\mathbf{e}_i)+g^{\prime\prime}_{iin}(\mathbf{x}_n-\tilde\theta_{2n}h_n\mathbf{e}_i)\bigr] 
	- g^{\prime\prime}_{iin}(\mathbf{x})\Bigr| \le C_g h_n. 
	\label{eq:eq2}
\end{equation}

	The left‑hand sides of (\ref{eq:eq1}) and (\ref{eq:eq2}) are exactly equal because they involve the same function values $f$ at the three design points. Therefore we have
\[
	g^{\prime\prime}_{iin}(\mathbf{x}) + \tilde R_n = f^{\prime\prime}_{ii}(\mathbf{x}) + R_n,
\]
where $|R_n|\le C_f h_n$ and $|\tilde R_n|\le C_g h_n$. Rearranging gives
\[
	|g^{\prime\prime}_{iin}(\mathbf{x}) - f^{\prime\prime}_{ii}(\mathbf{x})| = |\tilde R_n - R_n| \le (C_f + C_g) h_n.
\]
Since $h_n\to0$, we conclude
	\begin{equation}
	\lim_{n\to\infty} g^{\prime\prime}_{iin}(\mathbf{x}) = f^{\prime\prime}_{ii}(\mathbf{x}) \quad\text{almost surely}. 
	\label{eq:eq3}
	\end{equation}

For $i\neq j$, a similar argument using second‑order mixed differences (for example, with $\mathbf{h}_{ijn}=h_n\mathbf{e}_i+h_n\mathbf{e}_j$) or the polarization identity yields
\begin{equation}
	\lim_{n\to\infty} g^{\prime\prime}_{ijn}(\mathbf{x}) = f^{\prime\prime}_{ij}(\mathbf{x}) \quad\text{almost surely}.
	\label{eq:taylor11}
\end{equation}
The remainder bounds again rely on the Lipschitz continuity of $f^{\prime}_{ij}$ and the uniform boundedness of $g^{\prime\prime}_{ijn}$.


Since $g^{\prime\prime}_{ijn}(\cdot)$ is almost surely continuous for all $n\geq 1$ and $f^{\prime\prime}_{ij}(\cdot)$ is continuous, with $\mathcal X$ being compact, 
the pointwise convergence results (\ref{eq:eq3}) and (\ref{eq:taylor11}) lets us conclude, using Lemma \ref{lemma:lemma1}, that 
$\underset{n\rightarrow\infty}{\lim}\underset{\bx\in\mathcal X}{\sup}~|g^{\prime\prime}_{ijn}(\bx)|<\infty$.


\end{proof}

\begin{remark}
The proof above never divides by $f^{\prime}_{ii}(\mathbf{x})$, so it remains valid when $f^{\prime}_{ii}(\mathbf{x})=0$. In that case the 
bound $|g^{\prime\prime}_{iin}(\mathbf{x})|\le (C_f+C_g)h_n$ directly forces convergence to zero. Hence no special treatment is required.
\end{remark}

Theorems \ref{theorem:infill} and \ref{theorem:infill2} prove almost sure uniform convergence of $g_n(\cdot)$ and $\bg'_n(\cdot)$ to $f(\cdot)$ and $\bof'(\cdot)$, respectively.
However, the rates of convergence are not provided by these theorems. Further fine-tuning the structure of the set of input points $\bX_n$ helps achieve
desired rates of convergence, as we show next in Theorems \ref{theorem:theorem1} and \ref{theorem:theorem2}.  

\begin{theorem}
	\label{theorem:theorem1}
	Let $\mathcal X=\prod_{i=1}^d\mathcal X_i$, where, for $i=1,\ldots,d$, $\mathcal X_i$ are compact subsets of $\mathbb R$. 
	For each $i\in\{1,\ldots,d\}$, let $x_{1i}<x_{2i}<\cdots<x_{\tilde n_i i}$ be an ordered set of points partitioning $\mathcal X_i$, with 
	$h_i=\underset{1\leq j\leq \tilde n_i-1}{\max}~(x_{\overline{j+1}i}-x_{ji})$.
	For $i=1,\ldots,d$, and for $j=1,\ldots,\tilde n_i$, let input points of the form $(x^*_1,$$\ldots,x^*_{i-1},$\\$x_{ji},$$x^*_{i+1},$$\ldots,$$x^*_d)$  
	belong to $\bX_n$, where 
	$(x^*_1,\ldots,x^*_{i-1},x^*_{i+1},\ldots,x^*_d)\in\prod_{j\neq i}\mathcal X_j$ may be arbitrary. 

	For $\bx=(x_1,\ldots,x_d)$ and $\by=(y_1,\ldots,y_d)$, let the correlation function be such that 
$$\frac{\partial^4c(\bx^*,\by^*)}{\partial x_i\partial y_i\partial x_j\partial y_j}=
\frac{\partial^4c(\bx,\by)}{\partial x_i\partial y_i\partial x_j\partial y_j}\bigg |_{\bx=\bx^*,\by=\by^*}$$
exists for all $\bx^*,\by^*\in\mathcal X$ and is Lipschitz continuous on $\mathcal X\times\mathcal X$ for $i,j=1,\ldots,d$. 

	Then, letting 
	$h=\underset{1\leq i\leq d}{\max}~h_i$, the following holds for almost all sequences $\{\bg'_n(\cdot)\}_{n=1}^{\infty}$
	with the above forms of the input points:
	\begin{equation}
		\underset{\bx\in\mathcal X}{\sup}~\|\bg'_n(\bx)-\bof'(\bx)\|_d=O\left(\sqrt{h}\right),~\mbox{as}~n\rightarrow\infty
		~(\text{equivalently, as}~h\rightarrow 0).
		\label{eq:conv1}
	\end{equation}
	The constant associated with $O\left(\sqrt{h}\right)$ depends only upon $d$ and $f(\cdot)$.
\end{theorem}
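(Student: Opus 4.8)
The plan is to derive (\ref{eq:conv1}) from a deterministic interpolation (Landau--Kolmogorov type) inequality applied to $\phi_n:=g_n-f$, which is almost surely $C^2$ on the box $\mathcal X=\prod_i\mathcal X_i$ under the stated hypothesis on $c(\cdot,\cdot)$. I would fix, once and for all, a sequence of realizations on which $g_n$ interpolates $f$ on $\bX_n$ and on which the conclusions of Theorems \ref{theorem:infill} and \ref{theorem:infill2} hold; this covers almost every sequence, so it suffices to obtain (\ref{eq:conv1}) on such a sequence. Two uniform bounds are needed, each with a constant depending only on $d$ and $f(\cdot)$: (i) $\underset{\bx\in\mathcal X}{\sup}~|\phi_n(\bx)|=O(h)$, and (ii) $\underset{\bx\in\mathcal X}{\sup}~\bigl|\partial^2\phi_n(\bx)/\partial x_i^2\bigr|\le C_2$ for all large $n$ and all $i$. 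Granting (i) and (ii), an elementary Taylor argument that balances these two scales produces the $\sqrt h$ rate; note already that the rate is genuinely $\sqrt h$ rather than $h$ precisely because (ii) only asserts boundedness of the second derivatives of $\phi_n$, not smallness.

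For (i): the prescribed structure of $\bX_n$ forces it to contain the tensor grid $\prod_{i=1}^d\{x_{1i},\ldots,x_{\tilde n_i i}\}$, so every $\bx\in\mathcal X$ lies within Euclidean distance $\sqrt d\,h$ of some $\bx_n\in\bX_n$. By the mean value theorem, exactly as in (\ref{eq:taylor1}), $g_n(\bx)-g_n(\bx_n)=(\bx-\bx_n)^T\bg'_n(\bc_n)$ for some $\bc_n$ on the segment joining $\bx$ and $\bx_n$, and since $g_n(\bx_n)=f(\bx_n)$ almost surely while $f$ is Lipschitz on the compact box $\mathcal X$,
\[
|\phi_n(\bx)|\le\|\bx-\bx_n\|_d\Bigl(\underset{\bx\in\mathcal X}{\sup}~\|\bg'_n(\bx)\|_d+\underset{\bx\in\mathcal X}{\sup}~\|\bof'(\bx)\|_d\Bigr)\le\sqrt d\,h\Bigl(\underset{\bx\in\mathcal X}{\sup}~\|\bg'_n(\bx)\|_d+\underset{\bx\in\mathcal X}{\sup}~\|\bof'(\bx)\|_d\Bigr).
\]
By Theorem \ref{theorem:infill2}, $\underset{\bx\in\mathcal X}{\sup}~\|\bg'_n(\bx)\|_d\le\underset{\bx\in\mathcal X}{\sup}~\|\bof'(\bx)\|_d+1$ for all large $n$, so $\underset{\bx\in\mathcal X}{\sup}~|\phi_n(\bx)|\le C_1 h$ with $C_1$ depending only on $d$ and $\underset{\bx\in\mathcal X}{\sup}~\|\bof'(\bx)\|_d$, hence only on $d$ and $f(\cdot)$.

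For (ii): the pointwise convergence $g''_{iin}(\bx)\to f''_{ii}(\bx)$ established in the proof of Theorem \ref{theorem:infill2} (equation (\ref{eq:taylor11})), continuity of $g''_{iin}$ and $f''_{ii}$, and compactness of $\mathcal X$ let Lemma \ref{lemma:lemma1} give $\limsup_{n}\underset{\bx\in\mathcal X}{\sup}~|g''_{iin}(\bx)|<\infty$, whence $\underset{\bx\in\mathcal X}{\sup}~\bigl|\partial^2\phi_n(\bx)/\partial x_i^2\bigr|\le\underset{\bx\in\mathcal X}{\sup}~|g''_{iin}(\bx)|+\underset{\bx\in\mathcal X}{\sup}~|f''_{ii}(\bx)|\le C_2$ for all large $n$. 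I expect this to be the main obstacle, because Lemma \ref{lemma:lemma1} only furnishes \emph{some} finite bound; to see that $C_2$ can be chosen free of the covariance hyperparameters (and of $\sigma^2$ and the priors) one must revisit the grid finite-difference identities (\ref{eq:taylor5}) and (\ref{eq:taylor7})--(\ref{eq:taylor9}) from the proof of Theorem \ref{theorem:infill2}, which exhibit $g''_{iin}$ at points $O(h)$-close to grid nodes as a second difference of $f$ with step at most $h$ and hence controlled by $\underset{\bx\in\mathcal X}{\sup}~|f''_{ii}(\bx)|$, and then propagate this control to all of $\mathcal X$ using continuity and the density (as $n\to\infty$) of those near-node points; carefully tracking the constants here is the delicate part of the argument.

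Finally I would balance (i) and (ii). Fix $\bx\in\mathcal X$ and $i\in\{1,\ldots,d\}$, and pick $t>0$ with $\bx+t\bee_i\in\mathcal X$ (using $-\bee_i$ instead if $\bx$ is at the right endpoint of $\mathcal X_i$). Taylor's theorem gives, for some $\bx^\dagger$ on the segment,
\[
t\,\frac{\partial\phi_n(\bx)}{\partial x_i}=\phi_n(\bx+t\bee_i)-\phi_n(\bx)-\frac{t^2}{2}\frac{\partial^2\phi_n(\bx^\dagger)}{\partial x_i^2},
\]
so $\bigl|\partial\phi_n(\bx)/\partial x_i\bigr|\le 2t^{-1}\underset{\bx\in\mathcal X}{\sup}~|\phi_n(\bx)|+\frac{t}{2}C_2$. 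Choosing $t=2\bigl(\underset{\bx\in\mathcal X}{\sup}~|\phi_n(\bx)|/C_2\bigr)^{1/2}$, which tends to $0$ and is therefore admissible for all large $n$, yields $\bigl|\partial\phi_n(\bx)/\partial x_i\bigr|\le 2\bigl(C_2\,\underset{\bx\in\mathcal X}{\sup}~|\phi_n(\bx)|\bigr)^{1/2}\le 2\sqrt{C_1 C_2}\,\sqrt h$. Summing the squares over $i=1,\ldots,d$,
\[
\underset{\bx\in\mathcal X}{\sup}~\|\bg'_n(\bx)-\bof'(\bx)\|_d=\underset{\bx\in\mathcal X}{\sup}~\Bigl(\sum_{i=1}^d\bigl(\partial\phi_n(\bx)/\partial x_i\bigr)^2\Bigr)^{1/2}\le 2\sqrt{d\,C_1 C_2}\,\sqrt h
\]
for all large $n$, which is (\ref{eq:conv1}) with an $O(\sqrt h)$-constant $2\sqrt{d\,C_1 C_2}$ depending only on $d$ and $f(\cdot)$.
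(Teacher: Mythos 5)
Your proposal is correct in outline, but it takes a genuinely different route from the paper's. The paper argues slice-by-slice: for fixed $\bX^*_{-i}$ and the cell $[x_{ji},x_{\overline{j+1}i}]$ containing $x$, the interpolation property makes $g_{in}-f_i$ vanish at both endpoints, Rolle's theorem yields an interior point $u$ at which $g'_{in}-f'_i=0$, and the estimate follows from the representation $g'_{in}(x|\bX^*_{-i})-f'_i(x|\bX^*_{-i})=\int_u^x\left(g''_{in}-f''_i\right)dv$ together with Cauchy--Schwarz, the factor $|x-u|^{1/2}\leq h^{1/2}$ supplying the rate and the uniform boundedness of $g''_{in}$ (inherited from the proof of Theorem \ref{theorem:infill2} via Lemma \ref{lemma:lemma1}) supplying the constant. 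You instead establish the two global bounds $\underset{\bx\in\mathcal X}{\sup}|g_n(\bx)-f(\bx)|=O(h)$ (nearest design point, mean value theorem, and Theorem \ref{theorem:infill2}) and $\underset{\bx\in\mathcal X}{\sup}|\partial^2(g_n-f)/\partial x_i^2|\leq C_2$, and then interpolate between them by the classical Landau--Kolmogorov balancing $t^{-1}O(h)+tO(1)$ optimized at $t\asymp\sqrt h$; both arguments thus rest on the same two pillars (interpolation on the gridded design and uniform boundedness of second derivatives), and your route is no less rigorous than the paper's. You are in fact more candid than the paper about the one soft spot common to both proofs: Lemma \ref{lemma:lemma1} only furnishes \emph{some} finite bound on $\underset{\bx\in\mathcal X}{\sup}|g''_{ijn}(\bx)|$, so the assertion that the $O(\sqrt h)$ constant depends only on $d$ and $f(\cdot)$ needs the extra finite-difference bookkeeping you sketch (the paper's constant, via $\int_{\mathcal X_i}$, additionally involves the lengths of the $\mathcal X_i$). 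Two small points to tidy: admissibility of the step $t=2\left(\underset{\bx\in\mathcal X}{\sup}|\phi_n(\bx)|/C_2\right)^{1/2}$ should be argued through $h$ being small (or by capping $t$ at the shortest edge length of $\mathcal X$, the large-$h$ regime being trivial since $\underset{\bx\in\mathcal X}{\sup}\|\bg'_n(\bx)-\bof'(\bx)\|_d=o(1)$ by Theorem \ref{theorem:infill2}), rather than through $n$ being large; and note that your balancing step can never beat $\sqrt h$, whereas the paper's Rolle-point representation would even give $O(h)$ had the integral been bounded over $[u,x]$ instead of being Cauchy--Schwarzed over all of $\mathcal X_i$ --- a curiosity rather than a defect, since only $O(\sqrt h)$ is claimed.
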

\begin{proof}
%
%
	For any $i\in\{1,\ldots,d\}$, and for arbitrary $\bX^*_{-i}=(x^*_1,\ldots,x^*_{i-1},x^*_{i+1},\ldots,x^*_d)^T\in\prod_{j\neq i}\mathcal X_j$, 
	for any $x\in\mathcal X_i$, let
	\begin{align}
		g_{in}(x|\bX^*_{-i})&=g_n(x^*_1,\ldots,x^*_{i-1},x,x^*_{i+1},\ldots,x^*_d);\label{eq:g_i}\\
		f_i(x|\bX^*_{-i})&=f(x^*_1,\ldots,x^*_{i-1},x,x^*_{i+1},\ldots,x^*_d).\label{eq:f_i}
	\end{align}
	Since $x\in\mathcal X_i$, it must belong to some interval of the form $[x_{ji},x_{\overline{j+1}i}]$, for some $j\in\{1,2,\ldots,\tilde n_i-1\}$. Let us fix that $j$.
	For $y=x_{ji}$ and $y=x_{\overline{j+1}i}$, $g_{in}(y|\bX^*_{-i})=f_i(y|\bX^*_{-i})$ by interpolation property of the posterior Gaussian process, 
	assuming that $(x^*_1,\ldots,x^*_{i-1},y,x^*_{i+1},\ldots,x^*_d)\in\bX_n$ for 
	$y=x_{ji}$ and $y=x_{\overline{j+1}i}$. That is, $g_{in}(y|\bX^*_{-i})-f_i(y|\bX^*_{-i})=0$ for $y=x_{ji}$ and $y=x_{\overline{j+1}i}$. Hence, by Rolle's theorem,
	$g'_{in}(u|\bX^*_{-i})-f'_i(u|\bX^*_{-i})=0$, for some $u\in \left(x_{ji},x_{\overline{j+1}i}\right)$.
	This permits the following representation:
	\begin{equation}
		g'_{in}(x|\bX^*_{-i})-f'_i(x|\bX^*_{-i})=\int_u^x\left(g''_{in}(v|\bX^*_{-i})-f''_i(v|\bX^*_{-i})\right)dv,
		\label{eq:rolle1}
	\end{equation}
	The hypothesis of Lipschitz continuity of the $4$-th order mixed partial derivatives of the correlation function ensures existence and 
	sample path continuity of $g''_{in}(v|\bX^*_{-i})$.
	Hence, by the Cauchy-Schwartz inequality we obtain from (\ref{eq:rolle1}), the following:
	\begin{align}
		&\left|g'_{in}(x|\bX^*_{-i})-f'_i(x|\bX^*_{-i})\right|\notag\\
		&\qquad\leq \left[\int_u^x\left(g''_{in}(v|\bX^*_{-i})-f''_i(v|\bX^*_{-i})\right)^2dv\right]^{1/2}\times
		|x-u|^{1/2}\notag\\
		&\qquad\leq\left[\int_{\mathcal X_i}\left(g''_{in}(v|\bX^*_{-i})-f''_i(v|\bX^*_{-i})\right)^2dv\right]^{1/2}\times h^{1/2}_i\notag\\
		&\qquad\leq\underset{\bX^*_{-i}\in\prod_{j\neq i}\mathcal X_j}{\sup}~\left[\int_{\mathcal X_i}
		\left(g''_{in}(v|\bX^*_{-i})-f''_i(v|\bX^*_{-i})\right)^2dv\right]^{1/2}\times h^{1/2}.
		\label{eq:cs1}
	\end{align}
	Now, since the hypotheses of this theorem constitute a special case of Theorem \ref{theorem:infill2}, the result of almost sure uniform boundedness
	of $|g''_{in}(\cdot)|$ as $n\rightarrow\infty$, is valid here.
	Specifically, from the proof of Theorem \ref{theorem:infill2} in this case it holds that 
	$\underset{n\rightarrow\infty}{\lim}\underset{(u,\bX^*_{-i})\in\mathcal X}{\sup}~|g''_{in}(v|\bX^*_{-i})|<\infty$. This, along with continuity of
	$f''_i(v|\bX^*_{-i})$ and compactness of $\mathcal X$, shows that the integral term is bounded by a constant $C_i$ (depending only on $f$ and $d$) almost surely for large $n$. Hence the right-hand side of (\ref{eq:cs1}) is $O(\sqrt{h})$.

	Hence, switching to our usual notation, it follows that
	\begin{equation}
		\underset{\bx\in\mathcal X}{\sup}~\left(g'_{in}(\bx)-f'_i(\bx)\right)^2=O\left(h\right),~\mbox{almost surely, as}~n\rightarrow\infty. 
		\label{eq:cs2}
	\end{equation}
	Since 
	$ \underset{\bx\in\mathcal X}{\sup}~\sum_{i=1}^d\left(g'_{in}(\bx)-f'_i(\bx)\right)^2
		\leq \sum_{i=1}^d\underset{\bx\in\mathcal X}{\sup}~\left(g'_{in}(\bx)-f'_i(\bx)\right)^2$, it follows from (\ref{eq:cs2}) that
	\begin{equation*}
		\underset{\bx\in\mathcal X}{\sup}~\|\bg'_n(\bx)-\bof'(\bx)\|_d=O\left(\sqrt{h}\right),~\mbox{almost surely, as}~n\rightarrow\infty
		~(\text{equivalently, as}~h\rightarrow 0),
	\end{equation*}
	proving (\ref{eq:conv1}).
	Note that the constant associated with $O\left(\sqrt{h}\right)$ above depends only upon $d$ and $f$.
\end{proof}

The following result holds as a consequence of Theorem \ref{theorem:theorem1}.
\begin{theorem}
	\label{theorem:theorem2}
	Under the conditions of Theorem \ref{theorem:theorem1}, 
	the following holds with the forms of the input points as specified in Theorem \ref{theorem:theorem1}: for almost all sequences $\{g_n(\cdot)\}_{n=1}^{\infty}$,
	\begin{equation}
		\underset{\bx\in\mathcal X}{\sup}~\left|g_n(\bx)-f(\bx)\right|=O\left(h^{3/2}\right),~\mbox{as}~n\rightarrow\infty.
		\label{eq:conv2}
	\end{equation}
	The constant associated with $O\left(h^{3/2}\right)$ depends only upon $d$ and $f$.
\end{theorem}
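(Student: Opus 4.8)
The plan is to reuse the one-dimensional, coordinate-wise device from the proof of Theorem~\ref{theorem:theorem1}, integrating the first-derivative error estimate (\ref{eq:conv1}) once more to control the function values themselves. All of the work will be carried out on the full-measure set of posterior sample paths on which the conclusion of Theorem~\ref{theorem:theorem1} holds; since Theorem~\ref{theorem:theorem2} is stated under the very same hypotheses, and $\bg'_n(\cdot)$ is (almost surely, under the joint posterior) the gradient of $g_n(\cdot)$, restricting to this set is legitimate and the exceptional null set is the same.

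First I would fix an arbitrary coordinate $i\in\{1,\ldots,d\}$ and an arbitrary $\bX^*_{-i}\in\prod_{j\neq i}\mathcal X_j$, and pass to the one-dimensional slices $g_{in}(x\mid\bX^*_{-i})$ and $f_i(x\mid\bX^*_{-i})$ exactly as defined in (\ref{eq:g_i})--(\ref{eq:f_i}). For a given $x\in\mathcal X_i$, lying in some partition cell $[x_{ji},x_{\overline{j+1}i}]$ (of length at most $h_i\le h$), the structural hypothesis on $\bX_n$ inherited from Theorem~\ref{theorem:theorem1} guarantees that the point with $i$-th coordinate $x_{ji}$ and remaining coordinates $\bX^*_{-i}$ lies in $\bX_n$, so the interpolation property of the posterior Gaussian process gives $g_{in}(x_{ji}\mid\bX^*_{-i})=f_i(x_{ji}\mid\bX^*_{-i})$. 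Writing $e_n(v)=g'_{in}(v\mid\bX^*_{-i})-f'_i(v\mid\bX^*_{-i})$, the fundamental theorem of calculus then yields
$$g_{in}(x\mid\bX^*_{-i})-f_i(x\mid\bX^*_{-i})=\int_{x_{ji}}^{x}e_n(v)\,dv,$$
whence $\bigl|g_{in}(x\mid\bX^*_{-i})-f_i(x\mid\bX^*_{-i})\bigr|\le |x-x_{ji}|\,\sup_{v\in\mathcal X_i}|e_n(v)|\le h\,\sup_{\bx\in\mathcal X}|g'_{in}(\bx)-f'_i(\bx)|$.

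Next I would take the supremum over $x\in\mathcal X_i$ and over $\bX^*_{-i}\in\prod_{j\neq i}\mathcal X_j$: the left-hand side becomes $\sup_{\bx\in\mathcal X}|g_n(\bx)-f(\bx)|$, while the right-hand side is $h$ times $\sup_{\bx\in\mathcal X}|g'_{in}(\bx)-f'_i(\bx)|$, which is bounded above by $h\,\sup_{\bx\in\mathcal X}\|\bg'_n(\bx)-\bof'(\bx)\|_d = h\cdot O(\sqrt h)=O(h^{3/2})$ by (\ref{eq:conv1}), with constant depending only on $d$ and $f$. This is precisely (\ref{eq:conv2}). I do not anticipate a substantive obstacle here: the only points that need care are (i) verifying that the relevant slice endpoints genuinely belong to $\bX_n$ so that the interpolation identity applies — this is exactly the structural assumption carried over from Theorem~\ref{theorem:theorem1} — and (ii) using the crude estimate $\bigl|\int_{x_{ji}}^{x}e_n\bigr|\le|x-x_{ji}|\sup_{\mathcal X_i}|e_n|$ rather than extending the integral to all of $\mathcal X_i$ (which would introduce a spurious factor of $|\mathcal X_i|$ into the constant). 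The essential content is all contained in Theorem~\ref{theorem:theorem1}; what remains is bookkeeping.
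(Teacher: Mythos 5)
Your proposal is correct and follows essentially the same route as the paper: anchor at the cell endpoint $x_{ji}\in\bX_n$ via the interpolation property, write the function error as the integral of the derivative error over an interval of length at most $h$, and invoke the $O(\sqrt h)$ uniform derivative bound from Theorem \ref{theorem:theorem1} to get $O(h^{3/2})$. The only (immaterial) difference is that you bound the integral by $|x-x_{ji}|\sup|e_n|$ directly, whereas the paper applies the Cauchy--Schwartz inequality before using the same uniform derivative estimate.
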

\begin{proof}
	As in the proof of Theorem \ref{theorem:theorem1}, for any $x\in\mathcal X_i$, 
	it must belong to some interval of the form $[x_{ji},x_{\overline{j+1}i}]$, for some $j\in\{1,2,\ldots,\tilde n_i-1\}$. Let us fix that $j$.
	Now, $g_{in}(x_{ji}|\bX^*_{-i})=f_i(x_{ji}|\bX^*_{-i})$ almost surely, by interpolation property of the posterior process $g_n(\cdot)$, 
	assuming that $(x^*_1,\ldots,x^*_{i-1},x_{ji},x^*_{i+1},\ldots,x^*_d)\in\bX_n$.
        Hence,
	\begin{equation}
		g_{in}(x|\bX^*_{-i})-f_i(x|\bX^*_{-i})=\int_{x_{ji}}^x\left(g'_{in}(v|\bX^*_{-i})-f'_i(v|\bX^*_{-i})\right)dv.
		\label{eq:int1}
	\end{equation}
	The Cauchy-Schwartz inequality applied to (\ref{eq:int1}) gives
	\begin{equation}
		\left|g_{in}(x|\bX^*_{-i})-f_i(x|\bX^*_{-i})\right|\leq \left[\int_{x_{ji}}^x\left(g'_{in}(v|\bX^*_{-i})-f'_i(v|\bX^*_{-i})\right)^2dv\right]^{1/2}
		\times|x-x_{ji}|^{1/2}.
		\label{eq:int2}
	\end{equation}
	From (\ref{eq:cs1}) it follows that the integral on the right hand side of (\ref{eq:int2}) is $O\left(h\right)\times|x-x_{ji}|$, almost surely, as $n\rightarrow\infty$.
	Recall that the constant associated with $O\left(h\right)\times|x-x_{ji}|$ depends only upon $d$ and $f$.
	Since $|x-x_{ji}|$ is bounded above by $h$, it follows that the right hand side of (\ref{eq:int2}) is $O\left(h^{3/2}\right)$, 
	almost surely, as $n\rightarrow\infty$.
	Switching to the usual notation, it is seen that (\ref{eq:conv2}) holds. 
\end{proof}

\begin{remark}
	\label{remark:remark1}
	As $h\rightarrow 0$, $\bg'(\cdot)$ uniformly converges to $\bof'(\cdot)$ at the rate $h^{1/2}$ and $g(\cdot)$ uniformly converges to $f(\cdot)$
	at the rate $h^{3/2}$, almost surely with respect to their posteriors. 
\end{remark}

\begin{remark}
	\label{remark:remark2}
	In Theorems \ref{theorem:infill}, \ref{theorem:infill2}, \ref{theorem:theorem1} and \ref{theorem:theorem2} we have referred to the posterior (\ref{eq:postpred4}), 
	which corresponds to a linear mean structure of the Gaussian process prior and conjugate priors for $\bbeta$ and $\sigma^2$. However, as can be seen
	from the proofs, both the theorems continue to hold
	for any mean function that has continuous second order mixed partial derivatives and any prior on the parameters, including 
	the parameters of the correlation function such that the posteriors of the parameters are proper.
\end{remark}

\begin{remark}
	\label{remar:remark3}
	The hypotheses of Theorems \ref{theorem:theorem1} and \ref{theorem:theorem2} require 
	input points of the form $(x^*_1,$$\ldots,x^*_{i-1},$$x_{ji},$$x^*_{i+1},$$\ldots,$$x^*_d)$  
	to belong to $\bD_n$ for $i=1,\ldots,d$, and for $j=1,\ldots,\tilde n_i$, where 
	$(x^*_1,\ldots,x^*_{i-1},x^*_{i+1},\ldots,x^*_d)\in\prod_{j\neq i}\mathcal X_j$ may be chosen arbitrarily. 
%
	Now observe that if $\mathcal X_i=[a,b]$, for $i=1,\ldots,d$, for some $a<b$, then we can set $\tilde n_i=n$ and $h_i=h$, for $i=1,\ldots,d$. In such cases,
	inclusion of the set of $n^d$ points $\left\{(x_{j_11},\ldots,x_{j_dd}):j_1,\ldots,j_d\in\{1,\ldots,n\}\right\}$ in $\bD_n$ is sufficient for 
	Theorems \ref{theorem:theorem1} and \ref{theorem:theorem2} to hold.
	However, when $d$ and $n$ are even moderately large, $n^d$ is an extremely large number, which would prohibit computation of $\Sigma^{-1}_{22}$, and hence
	computation of the posterior of $\bg'(\cdot)$. Hence, for practical purposes it makes sense to refer to the general setup of 
	Theorems \ref{theorem:infill} and \ref{theorem:infill2}.
\end{remark}

\section{Algorithm for optimization with the Gaussian process derivative method}
\label{sec:algo}
We now propose a general methodology for function optimization, which judiciously exploits the posterior form $(\ref{eq:inv2})$. 
Without loss of generality, we consider the minimization problem for notational convenience.
In a nutshell, the initial stage (say, the $0$-th stage) of the methodology
involves simulations from $\pi(\bx^*|\bg'(\bx^*)=\bzero,\bD_n)$ satisfying $\|\bof'(\cdot)\|_d<\epsilon$ for some $\epsilon>0$ and
$\bSigma''(\cdot)>0$. In the subsequent stages $k=1,2,\ldots$,
previous stage realizations satisfying $\|\bof'(\cdot)\|_d<\eta_k$, where $\eta_k\rightarrow 0$ as $k\rightarrow\infty$, are successively augmented with $\bD_n$ and
realizations from the posterior associated with the augmented data are generated at each stage $k$ by a judicious importance resampling strategy. As $k\rightarrow\infty$,
the posteriors given the successively augmented data converge to the true optima. 

The importance of the $0$-th stage simulation algorithm for generating realizations from $\pi(\bx^*|\bg'(\bx^*)=\bzero,\bD_n)$ satisfying the restrictions
$\|\bof'(\cdot)\|_d<\epsilon$ for some $\epsilon>0$ and $\bSigma''(\cdot)>0$, is enormous, particularly because the entire $d$-dimensional random variable $\bx^*$ must be 
updated in a single block to meet the restrictions. 
Traditional MCMC algorithms are not known to be efficient in such problems as even for moderately
large dimensions good proposal distributions are difficult to devise, and the acceptance rates can be poor, along with poor mixing properties. 
In this regard, the transformation based Markov Chain Monte Carlo (TMCMC) proposed by \ctn{Dutta14} is an effective methodology.
Indeed, TMCMC is designed to update all (or most of) the components of the high-dimensional random variable in a single block using appropriate deterministic transformations
of some single (or low-dimensional) random variable. As such, this strategy drastically reduces effective dimensionality, which is responsible for maintaining good 
acceptance rates in spite of high dimensions. Good mixing properties can also be ensured by judiciously choosing the relevant ``move-types", and judicious mixtures
of additive and multiplicative transformations usually lead to desired mixing properties. For details on TMCMC and its properties, see \ctn{Dutta14}, \ctn{Dey16},
\ctn{Dey17}, \ctn{Dey19}. As such, we recommend TMCMC for our optimization methodology. 
We provide the detailed Bayesian optimization methodology below as Algorithm \ref{algo:algo1}.

\begin{algorithm}
\caption{Optimization with Gaussian process derivatives}
	\label{algo:algo1}
	\begin{itemize}
\item[(1)] First simulate $N$ realizations $\{\bx^*_1,\ldots,\bx^*_N\}$ 
	from $\pi(\bx^*|\bg'(\bx^*)=\bzero,\bD_n)$ given by (\ref{eq:inv2}) using TMCMC, where the prior for $\bx^*$ is given by (\ref{eq:prior_x_star}) for some
	pre-fixed $\epsilon>0$. 
This includes simulations from posteriors associated with most plausible functions $g(\cdot)$ satisfying $\bg'(\bx^*)=\bzero$ for $\bx^*\in\mathcal X$, given $\bD_n$.
	That is, $\left\{\bx^*_i; ~i=1,\ldots, N\right\}$, represents the set of solutions for $\bg'(\bx^*)=\bzero$ for functions $g(\cdot)$ that satisfy $g(\bx_i)=f(\bx_i)$;
	$i=1,\ldots,n$. Thanks to the prior (\ref{eq:prior_x_star}), these solutions further satisfy $\|\bof'(\bx^*_i)\|_d<\epsilon$ and $\bSigma''(\bx^*_i)>0$, for
	$i=1,\ldots,N$. Note that $\bSigma''(\bx^*)>0$ can be checked by computing the eigenvalues and checking if all the
eigenvalues are positive. But a more efficient alternative is to check if Cholesky decomposition of $\bSigma''(\bx^*)$ is possible, the information
of which is provided by the subroutines of the BLAS and LAPACK libraries. We exploit the latter for our implementation.

\item[(2)] For stages $k=1,2,3,\ldots$,
	\begin{enumerate}
		\item[(i)] For $i=1,\ldots,N$, compute importance weights proportional to 
			\begin{equation}
				w_k(\bx^*_i)=\left\{\begin{array}{cc}1 & \mbox{if}~~k=1;\\
					w_{k-1}(\bx^*_i)\times\frac{\pi(\bg'(\bx^*_i)=
					\bzero|\bD_{n+\sum_{j=0}^{k-1}n_j},\bx^*_i)}{\pi(\bg'(\bx^*_i)=\bzero|\bD_{n+\sum_{j=0}^{k-2}n_j},\bx^*_i)} & \mbox{if}~~k\geq 2,
				\end{array}\right.
				\label{eq:wts1}
			\end{equation}
			where $n_0=0$.
		\item[(ii)] Without replacement, select a subsample $\{\bx^*_{i_1},\ldots,\bx^*_{i_M}\}$ from $\{\bx^*_1,\ldots,\bx^*_N\}$ with probabilities proportional to
			$w_k(\bx^*_i)$; $i=1,\ldots,N$.
Note that, as $M\rightarrow\infty$ and $N\rightarrow\infty$ such that $M/N\rightarrow 0$, $\bx^*_{i_j}$; $j=1,\ldots,M$, follow the distribution 
			$\pi(\bx^*|\bg'(\bx^*)=\bzero,\bD_{n+\sum_{j=1}^{k-1}n_j})$.
The recursively computed importance weights $w_k(\bx^*_i)$; $i=1,\ldots,N$, are expected to be stable, since for each stage $k$, for $i=1,\ldots,N$, the factors
			$\frac{\pi(\bg'(\bx^*_i)=\bzero|\bD_{n+\sum_{j=1}^{k-1}n_j},\bx^*_i)}{\pi(\bg'(\bx^*_i)=\bzero|\bD_{n+\sum_{j=1}^{k-2}n_j},\bx^*_i)}$ in (\ref{eq:wts1})
are not expected to be very different from $1$ if $n_{k-1}$ is not significantly greater than zero. Note that the
			importance weights $w_k(\bx^*_i)$, for $i=1,\ldots,N$, can be computed simultaneously on parallel processors. This, along with stability
			of the recursive formulation (\ref{eq:wts1}), is expected to make for an efficient computational strategy.

		\item[(iii)] For $j=1,\ldots,M$, check if $\|\bof'(\bx^*_{i_j})\|_d<\eta_k$, where $\eta_k\rightarrow 0$ as $k\rightarrow\infty$.
			If $\bx^*_{i_j}$ satisfies this condition, then $\bx^*_{i_j}$ is a realization from $\pi(\bx^*|\bg'(\bx^*)=\bzero,\bD_{n+\sum_{j=1}^{k-1}n_j})$,
			where the prior for $\bx^*$ is uniform on $B(\eta_k)$, the form of which is given by (\ref{eq:B}). 
		\item[(iv)]	Let $n_k~(\geq 0)$ realizations among the $M$ realizations satisfy the condition $\|\bof'(\bx^*_{i_j})\|_d<\eta_k$. 
			Without loss of generality, assume that $\bx^*_{i_j}$; $j=1,\ldots,n_k$ are such realizations. 
			Compute $f(\bx^*_{i_j})$; $j=1,\ldots,n_k$, and augment $(\bx^*_{i_j},f(\bx^*_{i_j})$; $j=1,\ldots,n_k$,
			with $\bD_{n+\sum_{j=0}^{k-1}n_j}$ to form $\bD_{n+\sum_{j=0}^kn_j}$. In practice, to avoid excessive growth of the dataset and numerical instability, we recommend augmenting at most $5$ points per stage (see also Remark \ref{eq:remark0}).
		\item[(v)] Store the realizations $\{\bx^*_{i_j}:~j=1,\ldots,n_k\}$.
	\end{enumerate}
	\end{itemize}
\end{algorithm}

A detailed analysis of the computational complexity of Algorithm~\ref{algo:algo1} is presented in Appendix~\ref{app:complexity}. In summary, each TMCMC iteration (step (1)) requires $O(d^3 + d^2 n_0 + d n_0^2)$ operations, where $d$ is the input dimension and $n_0$ the initial dataset size. The total complexity of step (1) is $O(N_{\text{TMCMC}} d^3)$ when $d$ is moderate or large, with $N_{\text{TMCMC}}$ the number of TMCMC iterations. The refinement stage (step (2)) adds $O(K N (d^3 + d^2 N_{\max} + d N_{\max}^2) + K_{\text{aug}} C_f)$ operations, where $K$ is the number of stages, $N$ the number of stored samples, $N_{\max}$ the maximum dataset size, $K_{\text{aug}}$ the number of stages where augmentation occurs, and $C_f$ the cost of evaluating the objective function $f$ once. For the experimental settings in this paper, $C_f = O(d^2)$ and the TMCMC term $O(N_{\text{TMCMC}} d^3)$ dominates the runtime.

\subsection{Practical guidance for Algorithm \ref{algo:algo1}}
\label{subsec:guidance}

To facilitate reproducibility and effective application of Algorithm \ref{algo:algo1}, we provide the following practical recommendations.

\subsubsection{Initial design}

\subsubsection{Initial design}
In all our experiments, we used a simple initial design consisting of $n=10$ points uniformly spaced along the main diagonal of $\mathcal{X}=[-10,10]^d$, that is, 
$\mathbf{x}_i = (x_i,\dots,x_i)$ with $x_i = -10 + 2(i-1)$. This very sparse design suffices because the algorithm subsequently augments the dataset adaptively. 
For general applications, we recommend starting with a small number of points (for example, $n = 10$ to $20$) chosen either along a diagonal or as a simple random 
sample; the algorithm is robust to the initial design as long as the domain is covered to some extent. More sophisticated space‑filling designs (for example, 
Latin hypercube or Sobol sequences) (for example, \ctn{McKay1979}, \ctn{Sobol1967}) may be used if $n$ is kept small (for example, $n = 10d$), but we do not advocate dense grids because they would make the 
initial GP computations infeasible.

\subsubsection{Choice of tolerance decay $\eta_k$}

The sequence $\eta_k$ should converge to zero slowly enough to maintain stable importance weights. In our experiments we used the following choices (with $k=1,2,\ldots$):
\begin{itemize}
	\item[(a)] For $d=1,2$: $\eta_k = 1/(10+k-1)^2$ (quadratic decay).
	\item[(b)] For $d=5$: $\eta_k = 1.5/\log(10+k)$.
	\item[(c)] For $d=10$: $\eta_k = 7/\log(10+k-1)$.
	\item[(d)] For $d=50$: $\eta_k = 200/\log(10+k-1)$.
	\item[(e)] For $d=100$: $\eta_k = 850/\log(10+k-1)$.
\end{itemize}
In all cases $\eta_k \to 0$ and the decay is sufficiently slow to keep importance weights stable. For other problems, we recommend a logarithmic 
decay $\eta_k = C/\log(10+k)$ for moderate to high dimensions ($d\ge 5$), with $C$ chosen such that the initial $\eta_1$ is about $10\%$ of the typical 
gradient norm at random points 
(for example, $C$ roughly proportional to $d$). For low dimensions ($d\le 2$), a quadratic decay works well.

\subsubsection{Stopping criteria}
In our experiments, we simply fixed the number of refinement stages to $S=40$ for dimensions $d=2,5,10$ and $S=100$ for $d=50,100$. In all cases, $n_k$ became zero after a small number of stages (typically $k\le 10$ for low dimensions, $k\le 4$ for high dimensions). For general use, we recommend the following adaptive stopping criteria:
\begin{enumerate}
    \item[(a)] Stop if $n_k = 0$ for three consecutive stages (no new points satisfy the gradient bound).
    \item[(b)] Stop if the gradient norm $\|\mathbf{f}'(\mathbf{x}^*_{i_j})\|_d$ for the best candidate has not decreased by more than $1\%$ over the last $10$ stages.
    \item[(c)] Stop after a maximum number of stages $K_{\max}=100$ (safety bound).
\end{enumerate}
With these criteria, the algorithm typically terminates after $10$–$40$ stages in practice.


\subsection{Pseudo-code for key components}
To aid implementation, we provide pseudo-code for the TMCMC and the importance resampling components of Algorithm \ref{algo:algo1}.

\paragraph{TMCMC (single iteration):}
\begin{verbatim}
Input: current state x, target log-density L(x), scaling parameters a1, a2, 
       move probabilities p, q (e.g., 0.5)
Output: next state x'

1. Draw U ~ Uniform(0,1)
2. if U < p:
      // Additive move
      epsilon ~ N(0,1)
      for j=1..d: b_j ~ Uniform({-1,1})
      y_j = x_j + b_j * a1 * |epsilon|
      accept with prob min(1, exp(L(y)-L(x)))
3. else:
      // Multiplicative move
      epsilon ~ Uniform(-1,1)
      for j=1..d: b_j ~ Uniform({-1,0,1})
      y_j = x_j * epsilon if b_j=1
          = x_j / epsilon if b_j=-1
          = x_j otherwise
      J = |epsilon|^(sum(b_j))
      accept with prob min(1, exp(L(y)-L(x)) * J)
4. Set x_tilde = accepted y or x
5. Draw U ~ Uniform(0,1)
6. if U < q:
      // Additive refinement
      epsilon ~ N(0,1)
      for j=1..d: y_j = x_tilde_j + a2 * |epsilon| with prob 0.5, 
         else - a2 * |epsilon|
      accept with prob min(1, exp(L(y)-L(x_tilde)))
7. else:
      // Multiplicative refinement
      epsilon ~ Uniform(-1,1)
      if Uniform(0,1) < 0.5: y_j = x_tilde_j * epsilon, J = |epsilon|^d
      else: y_j = x_tilde_j / epsilon, J = |epsilon|^{-d}
      accept with prob min(1, exp(L(y)-L(x_tilde)) * J)
8. return accepted value
\end{verbatim}

\paragraph{Importance resampling at stage k:}
\begin{verbatim}
Input: prior samples {x_i} (i=1..N), current dataset D_{k-1}, target dataset D_k
Output: resampled indices {i_1,...,i_M}

1. For each i, compute w_i = w_{k-1}(x_i) * L_k(x_i) / L_{k-1}(x_i)
   where L_k(x) = pi(g'(x)=0 | D_k, x)
2. Normalize w_i to sum to 1
3. Sample M indices without replacement with probabilities w_i
4. Return the sampled indices
\end{verbatim}


\subsection{Further discussion of Algorithm \ref{algo:algo1}}
\label{subsec:further_discussion}
	Algorithm \ref{algo:algo1} begins by simulating from the posterior of $\bx^*$ satisfying $\|\bof'(\bx^*)\|_d<\epsilon$ and $\bSigma''(\bx^*)>0$.
	In the subsequent steps $k\geq 1$, the set of realizations $\{\bx^*_{i_j}:~j=1,\ldots,n_k\}$ generated by importance resampling 
	further satisfy $\|\bof'(\bx^*_{i_j})\|_d<\eta_k$ along with $\bSigma''(\bx^*_{i_j})>0$, for $j=1,\ldots,n_k$. 
	The implication is that, $\epsilon$ may be chosen somewhat larger to achieve reasonably good TMCMC mixing and
	acceptance rates. Indeed, if $n$ is not large enough, then $\bg'$ is not expected to be sufficiently close to $\bof'$, and hence for too small $\epsilon$, 
	$\{\bx^*:\|\bof'(\bx^*)\|_d<\epsilon\}$ would be too small a region to contain the solutions $\{\bx^*:\|\bg'(\bx^*)\|_d=0\}$, given $\bD_n$.
	This would result in poor TMCMC mixing.

	Once adequate mixing with reasonable acceptance rates are achieved with relatively small $n$ and relatively large $\epsilon$, 
	the subsequent steps increase the data size by
	augmenting the data with those values that satisfy $\|\bof'(\bx^*_{i_j})\|_d<\eta_k$. Thus, in the subsequent steps, these data points help better approximate
	the region around the stationary points of $f$ by the posterior, and enables more simulations from the region $\|\bof'(\bx^*_{i_j})\|_d<\eta_k$, finally
	leading to convergence of the solutions $\{\bx^*:\bg'_k(\bx^*)=\bzero,\bSigma''(\bx^*)>0\}$ to 
	$\{\bx^*:\bof'(\bx^*)=\bzero,\bSigma''(\bx^*)>0\}$, almost surely, as $k\rightarrow\infty$, where 
	$\bg'_k(\cdot)$ denotes any realization from the posterior of $\bg'(\cdot)$ given $\bD_{n+\sum_{j=0}^{k-1}n_j}$.
	This intuition is formalized below as Theorem \ref{theorem:algo1}.

\begin{theorem}
\label{theorem:algo1}
Consider the setup of Theorem \ref{theorem:infill2} (or more specifically, that of Theorem \ref{theorem:theorem1}).
Then, as $k\rightarrow\infty$,  
the set $\{\bx^*_{i_j}: j=1,\ldots,n_k\}$ of Algorithm \ref{algo:algo1} almost surely contains all the 
local minima of the objective function $f(\cdot)$, as $M\rightarrow\infty$ and $N\rightarrow\infty$ such that $M/N\rightarrow 0$.
\end{theorem}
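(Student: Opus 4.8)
Keep the minimization framing and write $N_k=n+\sum_{j=0}^{k}n_j$, so that $\bD_{N_{k-1}}$ is the data conditioned upon at stage $k$, and let $S=\{\bx\in\mathcal X:\bof'(\bx)=\bzero,\ \bSigma''(\bx)>0\}$ be the set of local minima of $f$ with positive definite Hessian. The plan is to show that, almost surely with respect to the randomness of the simulations in Algorithm~\ref{algo:algo1}, for every $\bx_0\in S$ the distance from $\bx_0$ to the stored set $\{\bx^*_{i_j}:j=1,\ldots,n_k\}$ tends to $0$ as $k\to\infty$ (in the stated double limit $M,N\to\infty$ with $M/N\to0$); this is precisely the assertion that the stored sets eventually capture every local minimum. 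The first step is to identify the law of the retained points. The recursion (\ref{eq:wts1}) telescopes to $w_k(\bx^*_i)=\pi(\bg'(\bx^*_i)=\bzero|\bD_{N_{k-1}},\bx^*_i)/\pi(\bg'(\bx^*_i)=\bzero|\bD_n,\bx^*_i)$; since the $N$ draws of Step~(1) come from the density proportional to $I_{B(\epsilon)}(\bx^*)\,\pi(\bg'(\bx^*)=\bzero|\bD_n,\bx^*)$, the consistency of importance resampling (the property already stated in Step~(2)(ii)) shows that, as $N\to\infty,\ M\to\infty,\ M/N\to0$, the subsample of Step~(2)(ii) follows the density proportional to $I_{B(\epsilon)}(\bx^*)\,\pi(\bg'(\bx^*)=\bzero|\bD_{N_{k-1}},\bx^*)$; the thinning in Step~(2)(iii) then replaces $I_{B(\epsilon)}$ by $I_{B(\eta_k)}$ (once $\eta_k\le\epsilon$), so the retained points follow exactly $\pi(\bx^*|\bg'(\bx^*)=\bzero,\bD_{N_{k-1}})$ of (\ref{eq:inv2}) with prior (\ref{eq:prior_x_star}) at level $\eta_k$. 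The regularity hypotheses inherited from Theorem~\ref{theorem:infill2} guarantee that all the derivative processes, hence all the posterior densities above, are well defined and continuous in $\bx^*$.

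Now fix $\bx_0\in S$. Since $f\in C^2$ and the Hessian $\bSigma''(\bx_0)$ is nonsingular, the inverse function theorem gives an open ball $U\ni\bx_0$ on which $\bof'$ is a homeomorphism onto its image and $\bSigma''(\cdot)>0$; in particular $\bx_0$ is the only zero of $\bof'$ in $U$. For each $k$ the set $V_k:=U\cap\{\bx:\|\bof'(\bx)\|_d<\eta_k\}$ is open and nonempty (it contains $\bx_0$), hence of positive Lebesgue measure, with $V_k\subseteq B(\eta_k)$; moreover, by continuity of the local inverse of $\bof'$ at $\bzero$ together with $\eta_k\to0$, for every $\delta>0$ one has $V_k\subseteq\{\bx:\|\bx-\bx_0\|_d<\delta\}$ for all sufficiently large $k$. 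On $B(\eta_k)$ the posterior density of $\bx^*$ is, by (\ref{eq:inv2}) and (\ref{eq:postpred5}), proportional to $[1+q_k(\bx^*)]^{-(a+d)}$, where $q_k(\bx^*)\ge0$ is the quadratic form in $\hat\bmu'(\bx^*)$ appearing in (\ref{eq:postpred5}); this is strictly positive and continuous, hence bounded below by a positive constant on any compact subset of $V_k$, so $\pi(V_k|\bg'(\bx^*)=\bzero,\bD_{N_{k-1}})>0$ for every $k$, irrespective of the current data. By the first step, the number of retained points falling in $V_k$ at stage $k$ is asymptotically a positive multiple of $M$, so almost surely at least one retained point lies in $V_k$ once $M$ is large; being in $\{\|\bof'(\cdot)\|_d<\eta_k\}$, such a point is kept in Steps~(2)(iv)--(v). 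Thus $n_k\ge1$ for all $k$ (so $N_k\to\infty$) and the stored set at stage $k$ contains a point $\by_k\in V_k$; since $V_k$ is eventually inside every ball about $\bx_0$, $\by_k\to\bx_0$. As $\bx_0\in S$ was arbitrary, the theorem follows. (If one additionally appeals to Theorem~\ref{theorem:infill2} applied on a neighbourhood of each $\bx_0\in S$ — legitimate because the retained points densify such a neighbourhood as $M,N\to\infty$ — one obtains the stronger statement of Section~\ref{subsec:further_discussion}, namely that the full solution set $\{\bx^*:\bg'_k(\bx^*)=\bzero,\ \bSigma''(\bx^*)>0\}$ converges to $S$.)

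\textbf{Main obstacle.} The subtle point is the interchange of limits: the resampling identity of the first step is only asymptotic in $M,N$ at each fixed stage, so the description ``retained points $\sim$ posterior given $\bD_{N_{k-1}}$'' must be established stage by stage and only afterwards passed to the limit $k\to\infty$; keeping the counts $n_k$ bounded below (so that $N_k\to\infty$), and — for the sharper statement — justifying the use of Theorem~\ref{theorem:infill2} near the points of $S$, where the augmented design concentrates at the stationary set rather than filling $\mathcal X$, are the places that need care. I would also flag that the statement tacitly restricts to nondegenerate local minima: a local minimum with singular Hessian fails $\bSigma''(\cdot)>0$ and is excluded by the prior (\ref{eq:prior_x_star}).
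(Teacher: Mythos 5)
Your proposal is correct (at the level of rigor the paper itself operates at), but it reaches the conclusion by a genuinely different route. The paper's proof is short: it identifies the retained points at stage $k$ as draws from $\pi(\bx^*|\bg'(\bx^*)=\bzero,\bD_{n+\sum_{j=0}^{k-1}n_j})$ subject to $\|\bof'\|_d<\eta_k$ and $\bSigma''>0$, and then invokes Theorem \ref{theorem:infill2} (or \ref{theorem:theorem1}) to get almost sure uniform convergence of $\bg'_k(\cdot)$ to $\bof'(\cdot)$, concluding that the constrained solution set $\{\bx^*:\bg'_k(\bx^*)=\bzero,\|\bof'(\bx^*)\|_d<\eta_k,\bSigma''(\bx^*)>0\}$ converges to $\{\bx^*:\bof'(\bx^*)=\bzero,\bSigma''(\bx^*)>0\}$. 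You share the first step (and make it more explicit by telescoping the weights in (\ref{eq:wts1})), but then you bypass the uniform-convergence machinery entirely: around each nondegenerate minimum $\bx_0$ you use the inverse function theorem to build shrinking sets $V_k=U\cap\{\|\bof'\|_d<\eta_k\}$, observe that the $t$-posterior density (\ref{eq:postpred5}) is strictly positive there regardless of the augmented data, and conclude via resampling consistency that the stored set at stage $k$ almost surely contains a point in $V_k$, hence points converging to $\bx_0$. What each buys: your argument is more elementary and, notably, does not lean on the infill-asymptotics design hypotheses at the later stages — a real advantage, since (as you correctly flag) the augmented points concentrate near the stationary set rather than infilling $\mathcal X$, which is the shakiest step in the paper's own appeal to Theorem \ref{theorem:infill2} ("as $k\to\infty$, equivalently $h\to 0$"). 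Conversely, the paper's route, granting those hypotheses, yields the stronger two-sided set convergence (\ref{eq:xconv1}) — no spurious solutions of $\bg'_k=\bzero$ persist — which your local argument does not deliver except by re-invoking the same theorem in your parenthetical. Your closing remarks (the stage-wise ordering of the limits in $M,N$ before $k$, and the tacit exclusion of local minima with singular Hessian by the constraint $\bSigma''(\cdot)>0$) are accurate observations that the paper leaves implicit.
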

\begin{proof}
Note that at stage $k$, as $M\rightarrow\infty$ and $N\rightarrow\infty$ such that $M/N\rightarrow 0$, $\bx^*_{i_j}$; $j=1,\ldots,n_k$, arise from 
	$\pi(\bx^*|\bg'(\bx^*)=\bzero,\bD_{n+\sum_{j=0}^{k-1}n_j})$, subject to $\|\bof'(\bx^*_{i_j})\|_d<\eta_k$ and 
	$\bSigma''(\bx^*_{i_j})>0$ for $j=1,\ldots,n_k$. 
	These realizations are solutions of $\bg'_k(\bx^*)=\bzero$ and $\bSigma''(\bx^*)>0$ when the data observed is
	$\bD_{n+\sum_{j=0}^{k-1}n_j}$. By Theorem \ref{theorem:infill2} (or more specifically by Theorem \ref{theorem:theorem1}), 
	as $k\rightarrow\infty$ (equivalently, as $h\rightarrow 0$ in Theorem \ref{theorem:theorem1}), $\bg'_k(\cdot)$ uniformly converges to $\bof'(\cdot)$ almost surely.
Hence, as $k\rightarrow\infty$,
\begin{equation}
	\{\bx^*:\bg'_k(\bx^*)=\bzero,\|\bof'(\bx^*)\|_d<\eta_k,\bSigma''(\bx^*)>0\}\rightarrow \{\bx^*:\bof'(\bx^*)=\bzero,\bSigma''(\bx^*)>0\},
	\label{eq:xconv1}
\end{equation}
almost surely. 

Due to (\ref{eq:xconv1}), as $k\rightarrow\infty$, 
the set $\{\bx^*_{i_j}: j=1,\ldots,n_k\}$ contains all the 
local minima of the objective function $f(\cdot)$, as $M\rightarrow\infty$ and $N\rightarrow\infty$ such that $M/N\rightarrow 0$.
\end{proof}

\begin{remark}
\label{eq:remark0}
In step (2) (iv) of Algorithm \ref{algo:algo1} we have suggested augmentation of all realizations $(\bx^*_{i_j},f(\bx^*_{i_j})$ satisfying $\|\bof'(\bx^*_{i_j})\|_d<\eta_k$ 
to the existing data $\bD_{n+\sum_{j=0}^{k-1}n_j}$. 
In practice, augmentation of all such realizations may enlarge the dataset to such an extent that invertibility of the resultant $\Sigma_{22}$ may be infeasible
or numerically unstable, so that computation of the corresponding posterior densities of $\bg'(\bx^*_i)=\bzero$, and hence the importance weights (\ref{eq:wts1}), 
may not yield reliable results. Hence in practice, as a rule of thumb, we recommend augmenting at most $5$ realizations satisfying $\|\bof'(\bx^*_{i_j})\|_d<\eta_k$,
which we consider in all our applications.
\end{remark}

\begin{remark}
\label{eq:remark1}
	As the stage number $k$ in step (2) of Algorithm \ref{algo:algo1} increases, $n_k$ decreases. Hence, in practice, $n_k$ will be zero after some large enough $k$.
	When $d$ is large, due to the curse of dimensionality, only the first few stages are expect to yield positive $n_k$.
\end{remark}

\begin{remark}
\label{eq:remark2}
	In step (1) of Algorithm \ref{algo:algo1}, that is, in the TMCMC step, as well as in any stage $k$ of step (2) of the algorithm, provided
	that $n_k$ is sufficiently large, desired credible regions of the respective posterior distributions of $\bx^*$ can be obtained. These quantify the stage-wise
	uncertainty in {\it a posteriori} learning about the optima, given $\|\bof'(\cdot)\|_d<\epsilon$ or $\|\bof'(\cdot)\|_d<\eta_k$. As $k\rightarrow\infty$, the uncertainty
	decreases, and the credibility regions shrink to the points representing the true optima.
	However, as mentioned in Remark \ref{remark:remark1}, in practice, particularly for large $d$, $n_k$ would be zero for most stages $k$, which would
	preclude computation of credible regions for most stages.
\end{remark}

\begin{remark}
\label{eq:remark3}
Note that if it is known beforehand that there is a single global minimum of $f(\cdot)$ on $\mathcal X$, 
then step (2) of Algorithm \ref{algo:algo1} is not required. It is then sufficient to
report $\bx^*_{i^*}$ as the (approximate) minimizer of $f(\cdot)$, where  $i^*=\arg\min\{f(\bx^*_i):i=1,\ldots,N\}$.
\end{remark}

\section{Bayesian characterization of the number of local minima of the objective function with recursive posteriors}
\label{sec:recursive}

Steps (2) (iii) and (2) (iv) can be combined to obtain a Bayesian characterization of the number of local minima of the objective function.
In this regard, for stage $j$, let us define $Y_j=\sum_{r=1}^MI_{B(\eta_j)}(\bx^*_{i_r})$, where $B(\eta_j)$ is given by (\ref{eq:B}). 
Thus, $\{Y_j=m\}$ with probability $p_{mj}$, for $m=0,1,2,\ldots,M$.
Since $M\rightarrow\infty$, we allow $Y_j$ to take values on the entire set of non-negative integers.
That is, we set 
\begin{equation}
	P\left(Y_j=m\right)=p_{mj};~m=0,1,2,\ldots,
	\label{eq:mult1}
\end{equation}
the infinite-dimensional multinomial distribution, where $0\leq p_{mj}\leq 1$ for $m=0,1,2,\ldots$ and $j\geq 1$. Further, $\sum_{m=0}^{\infty}p_{mj}=1$ for all $j\geq 1$.
We assume that the true probabilities $p_{m0}\in [0,1]$; $m=0,1,2,\ldots$, such that $\sum_{m=0}^{\infty}p_{m0}=1$, are unknown. 
Indeed, if $f(\cdot)$ has finite number of local minima, then 
there must exist $\tilde m\geq 0$ such that $p_{\tilde m 0}=1$ and $p_{m0}=0$ for $m\neq\tilde m$. For infinite number of local minima, we must have
$p_{m0}=0$ for any finite integer $m\geq 0$.

We adopt the approach of \ctn{Roy17} based on Dirichlet process (\ctn{Ferguson73}) to 
obtain the posterior distribution of the infinite set of parameters $\{p_{mk};~m=0,1,2,\ldots\}$.
\ctn{Roy17} came up with the following strategy for characterizing possibly infinite number of limit points of infinite series, which we usefully
employ for our current purpose, albeit with a definition of $Y_j$ that is different from that of \ctn{Roy17}.  

Let $\mathcal Y=\left\{1,2,\ldots\right\}$ and let $\mathcal B\left(\mathcal Y\right)$ denote the Borel
$\sigma$-field on $\mathcal Y$ (assuming every singleton of $\mathcal Y$ is an open set). Let $\mathcal P$
denote the set of probability measures on $\mathcal Y$. Then, at the first stage,
\begin{equation}
	\pi\left(Y_1|P_1\right)\equiv P_1,
\label{eq:Y_DP}
\end{equation}
where $P_1\in\mathcal P$. We assume that
$P_1$ is the following Dirichlet process:
\begin{equation}
	\pi\left(P_1\right)\equiv DP\left(G\right),
\label{eq:DP}
\end{equation}
where, the probability measure $G$ is such that, for every $j\geq 1$,
\begin{equation}
G\left(Y_j=m\right)=\frac{1}{2^m}.
\label{eq:G}
\end{equation}
Then 
\begin{equation*}
	\pi\left(P_1|y_1\right)\equiv DP\left(G+\delta_{y_1}\right),
\end{equation*}
where, for any $x$, $\delta_{x}$ denotes point mass at $x$.

At the second stage, we set the prior for $P_2$ to be the posterior given $y_1$ corresponding to $DP\left(\left(1+\frac{1}{2^2}\right)G\right)$ prior for $P_1$.
That is, $\pi\left(P_2\right)\equiv DP\left((1+\frac{1}{2^2})G+\delta_{y_1}\right)$. Then, with respect to this prior for $P_2$, the posterior of $P_2$ is given by
\begin{equation*}
	\pi\left(P_2|y_2\right)\equiv DP\left(\left(1+\frac{1}{2^2}\right)G+\delta_{y_1}+\delta_{y_2}\right).
\end{equation*}

Continuing this recursive process we obtain that, at the $k$-th stage, 
the posterior of $P_k$ to be a Dirichlet process, given by
\begin{equation}
	\pi\left(P_k|y_k\right)\sim DP\left(\sum_{j=1}^k\frac{1}{j^2}G+\sum_{j=1}^k\delta_{y_j}\right).
\label{eq:posterior_DP}
\end{equation}
It follows from (\ref{eq:posterior_DP}) that
\begin{align}
E\left(p_{mk}|y_{k}\right)&=
\frac{\frac{1}{2^m}\sum_{j=1}^k\frac{1}{j^2}+\sum_{j=1}^k I\left(y_{j}=m\right)}
{\sum_{j=1}^k\frac{1}{j^2}+k};
\label{eq:mean_DP}\\
Var\left(p_{mk}|y_{k}\right)&=
\frac{\left(\sum_{j=1}^k\frac{1}{j^2}+\sum_{j=1}^k I\left(y_{j}=m\right)\right)
\left((1-\frac{1}{2^m})\sum_{j=1}^k\frac{1}{j^2}+k-\sum_{j=1}^k I\left(y_{j}=m\right)\right)}
{\left(\sum_{j=1}^k\frac{1}{j^2}+k\right)^2\left(\sum_{j=1}^k\frac{1}{j^2}+k+1\right)}.
\label{eq:var_DP}
\end{align}

The theorem below characterizes the number of local minima of
the objective function $f(\cdot)$ in terms of the limit of the marginal posterior
probabilities of $p_{mk}$, denoted by $\pi_m\left(\cdot|y_k\right)$,  
as $k\rightarrow\infty$. 
\begin{theorem}
\label{theorem:post_minima1}
Assume the conditions of Theorem \ref{theorem:theorem1}, and in Algorithm \ref{algo:algo1}, assume that $M\rightarrow\infty$ and $N\rightarrow\infty$ such that
$M/N\rightarrow 0$.
Then $f(\cdot)$ has $\tilde m~(\geq 0)$ local minima if and only if 
\begin{equation}
	\pi_{\tilde m}\left(\mathcal N_1|y_{k}\right)\rightarrow 1,~\mbox{almost surely with respect to the posterior}~(\ref{eq:inv2}),
	\label{eq:con1}
\end{equation}
	as $k\rightarrow\infty$.
	In the above, $\mathcal N_1$ is any neighborhood of $1$ (one). 
\end{theorem}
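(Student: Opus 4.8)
The plan is to connect the Dirichlet-process posterior machinery of Section \ref{sec:recursive} with the almost-sure convergence result of Theorem \ref{theorem:algo1}. The key observation is that, under the stated asymptotics ($M\rightarrow\infty$, $N\rightarrow\infty$, $M/N\rightarrow 0$), the subsample $\{\bx^*_{i_r}: r=1,\ldots,M\}$ at stage $k$ behaves like an i.i.d.\ sample from $\pi(\bx^*|\bg'(\bx^*)=\bzero,\bD_{n+\sum_{j=0}^{k-1}n_j})$, so $Y_j$ is (in the limit) a draw from the multinomial $(p_{0j},p_{1j},\ldots)$. The first step is therefore to argue that, when $f(\cdot)$ has exactly $\tilde m$ local minima, Theorem \ref{theorem:algo1} forces the stage-$k$ solution set $\{\bx^*:\bg'_k(\bx^*)=\bzero,\|\bof'(\bx^*)\|_d<\eta_k,\bSigma''(\bx^*)>0\}$ to converge (almost surely with respect to the posterior (\ref{eq:inv2})) to the finite set of $\tilde m$ true local minima. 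Consequently the number of resampled points landing in $B(\eta_k)$ stabilizes: for all sufficiently large $k$, $Y_k=\tilde m$ almost surely, i.e.\ $y_k=\tilde m$ eventually.

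The second step is to feed this into the explicit posterior moments (\ref{eq:mean_DP}) and (\ref{eq:var_DP}). Since $\sum_{j=1}^k \frac{1}{j^2}$ converges to $\pi^2/6$ (a finite constant) while the denominator grows like $k$, and since $\sum_{j=1}^k I(y_j=\tilde m) = k - O(1)$ once $y_j\equiv\tilde m$ for $j$ large, we get $E(p_{\tilde m k}|y_k)\rightarrow 1$ and $\mathrm{Var}(p_{\tilde m k}|y_k)\rightarrow 0$ as $k\rightarrow\infty$. By Chebyshev's inequality (or simply because an $[0,1]$-valued random variable with mean tending to $1$ and variance tending to $0$ concentrates at $1$), the marginal posterior $\pi_{\tilde m}(\cdot|y_k)$ converges weakly to the point mass at $1$, which gives $\pi_{\tilde m}(\mathcal N_1|y_k)\rightarrow 1$ for every neighborhood $\mathcal N_1$ of $1$. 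This establishes the ``only if'' direction. For the ``if'' direction, I would argue the contrapositive: if $f(\cdot)$ does not have exactly $\tilde m$ local minima, then either it has some other finite number $m'\neq\tilde m$ of them — in which case the same argument shows $\pi_{m'}(\mathcal N_1|y_k)\rightarrow 1$ and hence, by $\sum_m p_{mk}=1$ together with the $\tilde m$-component having mean bounded away from $1$, $\pi_{\tilde m}(\mathcal N_1|y_k)\not\rightarrow 1$ — or it has infinitely many local minima, in which case $y_k\rightarrow\infty$ and for each fixed $m$, $E(p_{mk}|y_k)\rightarrow 0$, so again $\pi_{\tilde m}(\mathcal N_1|y_k)\rightarrow 0$.

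The main obstacle I anticipate is making rigorous the claim that stabilization of the \emph{set} of posterior solutions (the content of (\ref{eq:xconv1})) implies stabilization of the \emph{count} $Y_k$. One must be careful: $Y_k$ counts resampled realizations in $B(\eta_k)$, not distinct limit points, so in principle several of the $M$ resampled points could cluster near the same true minimum. The resolution is that the relevant statistic for the Dirichlet-process characterization is the number of \emph{distinct} stationary points captured — one should either redefine $Y_j$ accordingly (as the authors note, their $Y_j$ differs from that of \ctn{Roy17}) or argue that, after de-duplication up to the resolution $\eta_k$, the count is exactly the number of local minima that the posterior solution set has converged to. Once this bookkeeping is pinned down, the remaining steps are routine manipulations of the closed-form Dirichlet-process posterior moments, and the almost-sure qualifier simply propagates from the almost-sure statement in Theorem \ref{theorem:algo1}.
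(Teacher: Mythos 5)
Your proposal follows essentially the same route as the paper: Theorem \ref{theorem:algo1} is used to conclude $y_j=\tilde m$ for all sufficiently large $j$, the closed-form Dirichlet-process posterior moments (\ref{eq:mean_DP})--(\ref{eq:var_DP}) together with a Chebyshev/Markov bound give the forward implication, and the converse follows from applying the forward argument to a putative $m'\neq\tilde m$ together with $\sum_m p_{mk}=1$ (the paper phrases this as a contradiction via dominated convergence rather than your contrapositive, but the content is the same). The bookkeeping issue you flag — that $Y_j$ counts resampled points falling in $B(\eta_j)$ rather than distinct captured minima — is genuine but is not addressed in the paper either: its proof simply asserts $y_j=\tilde m$ eventually, so your suggested de-duplication reading of $Y_j$ is, if anything, more careful than the published argument.
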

\begin{proof}
	First, let us assume that $f(\cdot)$ has $\tilde m~(\geq 0)$ local minima. Then, by Theorem \ref{theorem:algo1}, Algorithm \ref{algo:algo1} converges
	to the $\tilde m$ local minima almost surely, as $k\rightarrow\infty$, provided that $M\rightarrow\infty$ and $N\rightarrow\infty$ such that $M/N\rightarrow 0$.
	Hence, there exists $j_0\geq 1$, such that for $j\geq j_0$, $y_j=\tilde m_0$.
	Thus, almost surely, $I(y_j=\tilde m)=1$, for $j\geq j_0$.
Consequently, it easily follows from (\ref{eq:mean_DP}) and (\ref{eq:var_DP}) that almost surely, as $k\rightarrow\infty$,
\begin{align}
E\left(p_{\tilde mk}|y_{k}\right)&\rightarrow 1,~~\mbox{and}
\label{eq:mean_DP_convergence}\\
Var\left(p_{\tilde mk}|y_{k}\right)&= O\left(\frac{1}{k}\right)\rightarrow 0.
\label{eq:var_DP_convergence}
\end{align}
	Now let $\mathcal N_1$ denote any neighborhood of $1$, and let $\epsilon~(>0)$ be sufficiently small such that $\mathcal N_1\supseteq \{1-p_{\tilde mk}<\epsilon\}$.
	Then using Markov's inequality we obtain 
	\begin{align}
		&\pi_{\tilde m}\left(\mathcal N_1|y_{k}\right)\geq \pi_{\tilde m}\left(1-p_{\tilde mk}<\epsilon|y_{k}\right)\notag\\
		&\qquad = 1-\pi_{\tilde m}\left(1-p_{\tilde mk}\geq\epsilon|y_{k}\right)\notag\\
		&\qquad\geq 1-\frac{E\left(1-p_{\tilde mk}|y_{k}\right)^2}{\epsilon^2}\notag\\
		&\qquad = 1-\frac{1-2E\left(p_{\tilde mk}|y_{k}\right)+E\left(p^2_{\tilde mk}|y_{k}\right)}{\epsilon^2}.
		\label{eq:markov1}
	\end{align}
	Now, as $k\rightarrow\infty$, $E\left(p_{\tilde mk}|y_{k}\right)\rightarrow 1$ by (\ref{eq:mean_DP_convergence}), and
	$E\left(p^2_{\tilde mk}|y_{k}\right)=Var\left(p_{\tilde mk}|y_{k}\right)+\left[E\left(p_{\tilde mk}|y_{k}\right)\right]^2\rightarrow 1$ by 
	(\ref{eq:mean_DP_convergence}) and (\ref{eq:var_DP_convergence}). Hence, the right hand side of (\ref{eq:markov1}) converges to $1$ almost surely, as
	$k\rightarrow\infty$. This proves (\ref{eq:con1}).

	Now assume that (\ref{eq:con1}) holds for any neighborhood $\mathcal N_1$ of $1$. Let us fix $\eta\in (0,1)$. 
	Then given any $\epsilon\in (0,1-\eta)$,
	\begin{equation}
	\pi_{\tilde m}\left(1-p_{\tilde mk}<\epsilon|y_{k}\right)\rightarrow 1, 
		\label{eq:markov2}
	\end{equation}
	almost surely as $k\rightarrow\infty$. The left hand side of (\ref{eq:markov2}) admits the following Markov's inequality:
         \begin{align}
           &\pi_{\tilde m}\left(1-p_{\tilde mk}<\epsilon|y_{k}\right)=\pi_{\tilde m}\left(p_{\tilde mk}>1-\epsilon|y_{k}\right) 
	      <\frac{E\left(p_{\tilde mk}|y_{k}\right)}{1-\epsilon}.
		 \label{eq:markov3}
	\end{align}
	Due to (\ref{eq:markov2}), validity of (\ref{eq:markov3}) 
	for all $\epsilon\in (0,1-\eta)$, and almost sure upper boundedness of $p_{\tilde mk}$ 
	by $1$, it follows that
	\begin{equation}
		E\left(p_{\tilde mk}|y_{k}\right)\rightarrow 1,~\mbox{almost surely},~\mbox{as}~k\rightarrow\infty.
		\label{eq:meandp1}
	\end{equation}
%
	Now, if $f(\cdot)$ is assumed to have $m^*$ local minima where $m^*\neq\tilde m$, then due to (\ref{eq:mean_DP_convergence}) we must have
	\begin{equation}
		E\left(p_{m^*k}|y_{k}\right)\rightarrow 1,~\mbox{almost surely},~\mbox{as}~k\rightarrow\infty.
		\label{eq:dct0}
	\end{equation}
	Also note that since $0\leq p_{mk}\leq 1$ for all $m$ and $k$, the dominated convergence theorem ensures the following, almost surely:
	\begin{equation*}
	1=\underset{k\rightarrow\infty}{\lim}~\sum_{m=1}^{\infty}E\left(p_{mk}|y_{k}\right)
		=\sum_{m=1}^{\infty}\underset{k\rightarrow\infty}{\lim}~E\left(p_{mk}|y_{k}\right),
	\end{equation*}
	which implies, due to (\ref{eq:dct0}), that $E\left(p_{\tilde mk}|y_{k}\right)\rightarrow 0$, almost surely, as $k\rightarrow\infty$.
	But this would contradict (\ref{eq:meandp1}). 
	Hence, $f(\cdot)$ must have $\tilde m$ local minima.
\end{proof}

\section{Experiments}
\label{sec:exps}
We consider application of Algorithm \ref{algo:algo1} to $5$ different optimization problems, ranging from simple to challenging, several of which are problems
of finding both maxima and minima, and one is concerned with saddle points and inconclusiveness in addition to maximum. 
Encouragingly, all our experiments bring forth the versatility of Algorithm \ref{algo:algo1} in capturing all the optima, saddlepoints, as well as inconclusiveness 
of the problems.

As regards TMCMC, in our examples, we expectedly find that in the less challenging, low-dimensional problems, additive TMCMC is sufficient, while
in the more challenging cases, we consider appropriate mixtures of additive and multiplicative moves, followed by a further move of a specialized mixture
of additive and multiplicative transformations to improve mixing.

In most of our experiments, we discard the first $10^5$ TMCMC iterations as burn-in and store every $10$-th TMCMC realization in the next $5\times 10^5$ iterations,
to obtain $50000$ realizations before proceeding to step (2) of Algorithm \ref{algo:algo1}. 
However, in the $4$-th example, due to inadequate mixing, we discard the first $10^6$ iterations and store every $10$-th realization in 
the next $5\times 10^6$ iterations to obtain $5\times 10^5$ TMCMC realizations.
For the resample size in step (2), we set $M=1000$.

For the posterior of $\bg'(\cdot)=\bzero$ given by (\ref{eq:postpred5}), we set $a=b=0.1$, $\bbeta_0=\bzero$ and $\bSigma_0=\mathbb I_d$, for all the examples, where
$d$ is the dimension relevant to the problem.
In all the examples, we also set $\mathcal X=[-10,10]^d$ and the initial input size $n=10$. With $\bx_i=(x_{i1},\ldots,x_{id})$ for $i=1,\ldots,n$, we choose the inputs points 
as $x_{ik}=-10+2(i-1)$, for $i=1,\ldots,n$ and for $k=1,\ldots,d$. We then evaluate $f(\cdot)$ at each $\bx_i$; $i=1,\ldots,n$, to form $\bD_n$
with $n=10$. As we shall demonstrate, this strategy, in conjunction with the rest of the methodology, leads to adequate estimation of the optima in our examples. 

\subsection{Example 1}
\label{subsec:example1}
We begin with a simple example, where the goal is to obtain the maxima and minima of the function
\begin{equation}
	f(x)=2x^3-3x^2-12x+6.
	\label{eq:ex1}
\end{equation}
Here $f'(x)=6(x-2)(x+1)$ and $f''(x)=6(2x-1)$.
Hence, this function has a maximum at $x=-1$ and minimum at $x=2$.

We apply step (1) of Algorithm \ref{algo:algo1} with 
$\epsilon=1$, implementing additive TMCMC with 
equal move-type probabilities for forward and backward transformations.
Specifically, at iteration $t=1,2,\ldots$, letting $x^{(t-1)}$ denote the TMCMC realization at iteration $t-1$, 
we draw $\varepsilon\sim N(0,1)$ and consider the transformation $y=x^{(t-1)}+b|\varepsilon|$,
where $b$ takes the values $1$ and $-1$ with equal probabilities. We set $x^{(t)}=y$ with probability 
\begin{equation*}
	\alpha=\min\left\{1,\frac{\pi(y)\pi(g'(y)=0|\bD_n,y)}{\pi(x^{(t-1)})\pi(g'(x^{(t-1)})=0|\bD_n,x^{(t-1)})}\right\},
\end{equation*}
and set $x^{(t)}=x^{(t-1)}$ with the remaining probability.
This is of course same as the ordinary random walk Metropolis algorithm since the dimension here is $d=1$; 
see \ctn{Dutta14} for ramifications and detailed discussions. 
\begin{figure}
	\centering
	\subfigure [TMCMC for maximum.]{ \label{fig:ex1_max}
	\includegraphics[width=7.5cm,height=6.5cm]{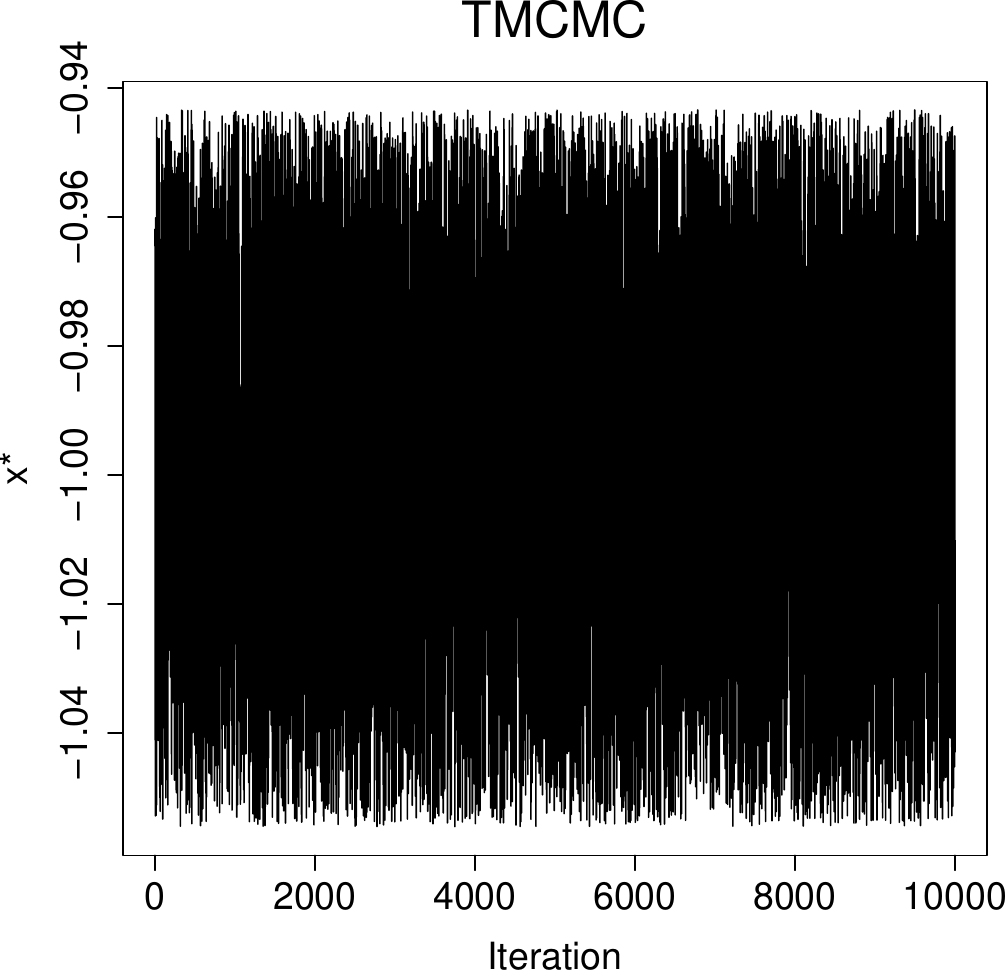}}
	\hspace{2mm}
	\subfigure [TMCMC for minimum.]{ \label{fig:ex1_min}
	\includegraphics[width=7.5cm,height=6.5cm]{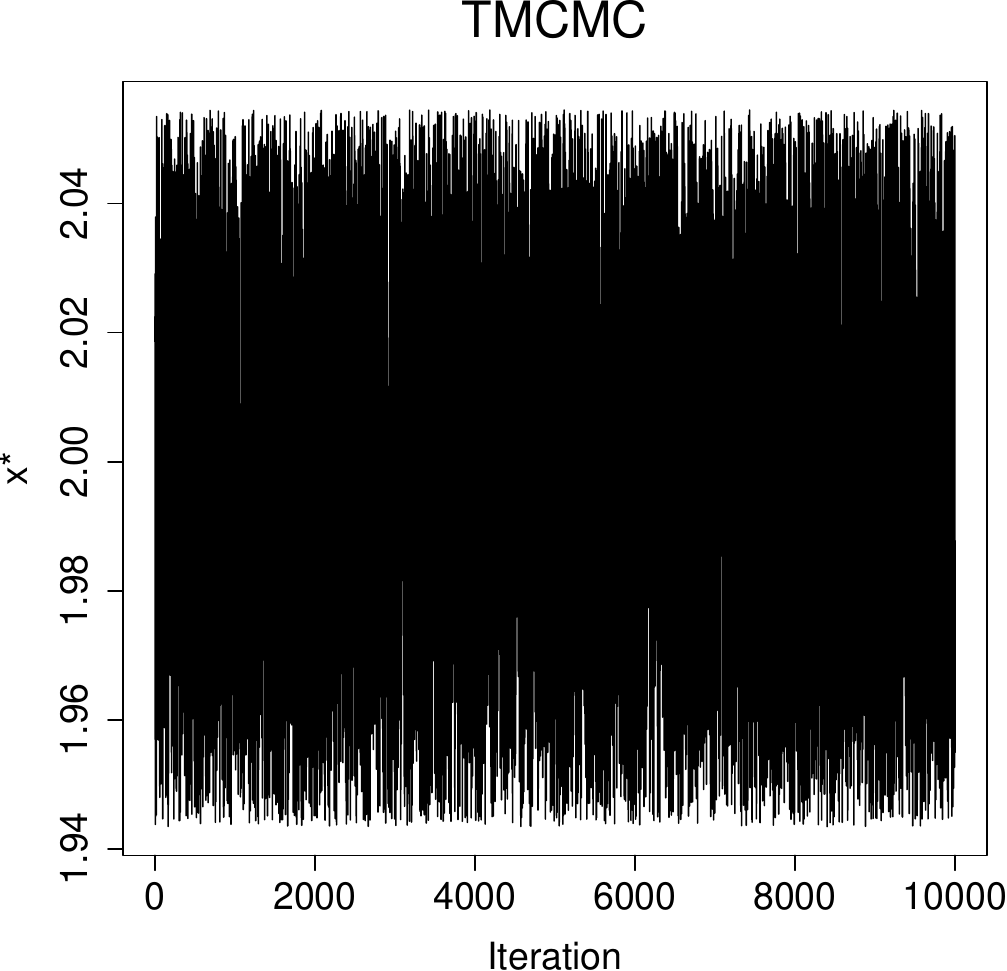}}
	\caption{TMCMC trace plots for Example 1.}
	\label{fig:ex1}
\end{figure}
Figure \ref{fig:ex1} shows the trace plots of $x^*$ for maximum and minimum, 
where to reduce the figure file size, we thinned the original TMCMC sample of size $N=50000$ to $10000$
by displaying every $5$-th realization. The trace plots indicate adequate mixing properties of TMCMC. 
We run step (2) of Algorithm \ref{algo:algo1} for $k=1,\ldots,S$ stages with $S=40$, setting $\eta_k=1/(10+k-1)^2$, 
computing the importance weights for the $N$ TMCMC realizations at each stage on $100$ $64$-bit cores in a VMWare parallel computing environment.
The cores have 2.80 GHz speed, and have access to 1 TB memory. All our codes are written in C, using the Message Passing Interface (MPI) protocol for parallel processing. 
As such, our entire exercise is completed in about $2$ minutes.
We obtain $\hat x_{\max}=-0.999995$ and
$\hat x_{\min}=2.000023$, as our estimates
of the maximum and the minimum, respectively, which are quite accurate. 

\subsection{Example 2}
\label{subsec:example2}
We now consider maximization and minimization of the function $f(x)=\sin(x)$ for $x\in[-10,10]$.
Here, the true maxima are $x_{\max}=\{\frac{\pi}{2}-2\pi=-4.712389,\frac{\pi}{2}=1.570796, \frac{\pi}{2}+2\pi=7.853982\}$, and the minima are
$x_{\min}=\{-7.853982,-1.570796,4.712389\}$.

We implement Algorithm \ref{algo:algo1} in the same way as in Example 1. Figure \ref{fig:ex2} displays the TMCMC trace plots, thinned to $10000$ realizations
to reduce figure file sizes. Clear tri-modality can be visualized from both the trace plots. After implementing step (2) of Algorithm \ref{algo:algo1} for $S=40$ stages
on our VMWare parallel computing architecture,
we obtain $\hat x_{\max}=\{-4.712581,1.570655,7.860879\}$ and $\hat x_{\min}=\{-7.854051,-1.570423,4.713222\}$, which turn out to be adequate approximations to the truths.
Again, the entire exercise takes about $2$ minutes.
\begin{figure}
	\centering
	\subfigure [TMCMC for maximum.]{ \label{fig:ex2_max}
	\includegraphics[width=7.5cm,height=6.5cm]{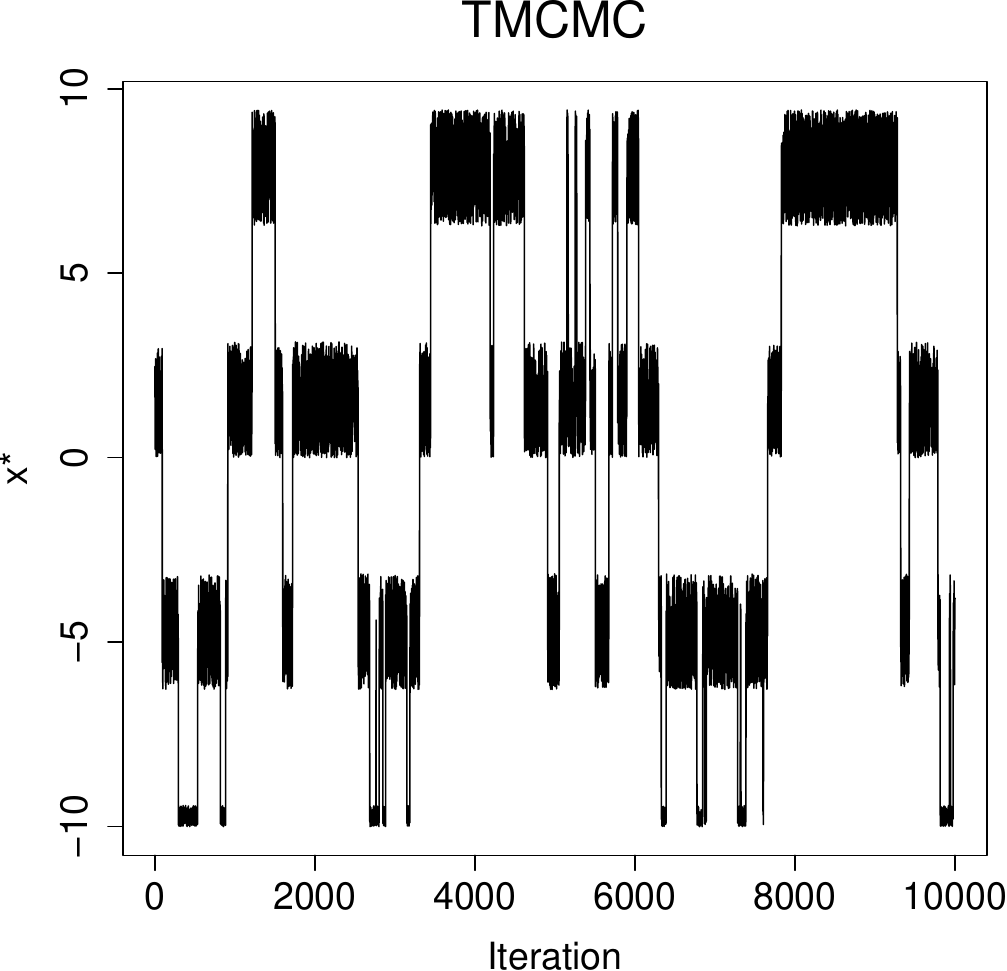}}
	\hspace{2mm}
	\subfigure [TMCMC for minimum.]{ \label{fig:ex2_min}
	\includegraphics[width=7.5cm,height=6.5cm]{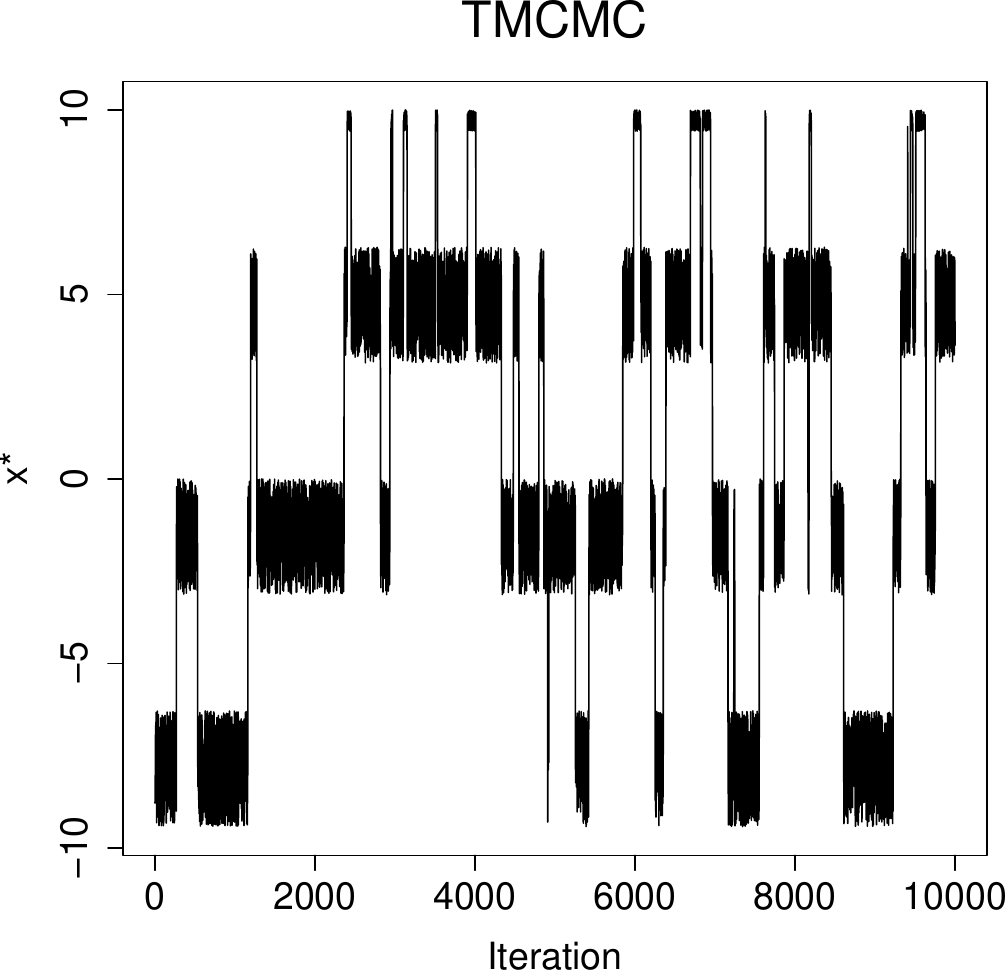}}
	\caption{TMCMC trace plots for Example 2.}
	\label{fig:ex2}
\end{figure}

\subsection{Example 3}
\label{subsec:example3}
Let us now consider a two-dimensional example, given by
$f(x_1,x_2)=x_1x_2(x_1+x_2)(1+x_2)$.

The first derivatives are given by 
$f'_1(x_1,x_2)=x_2(2x_1+x_2)(x_2+1)$ and $f'_2(x_1,x_2)=x_1(3x^2_2+2x_2(x_1+1)+x_1)$.

The second derivatives are $f''_{11}(x_1,x_2)=2x_2(x_2+1)$, $f''_{12}(x_1,x_2)=f''_{21}(x_1,x_2)=4x_1x_2+3x^2_2+2(x_1+x_2)$ and $f''_{22}(x_1,x_2)=2x_1(3x_2+x_1+1)$.

Consider the determinant $D(x_1,x_2)=f''_{11}(x_1,x_2)f''_{22}(x_1,x_2)-\left[f''_{12}(x_1,x_2)\right]^2$.
Now, if $(a,b)$ is any critical point of $f(\cdot)$ satisfying $f'_1(a,b)=0$ and $f'_2(a,b)=0$, then $(a,b)$ is a local maximum if $D(a,b)>0$ and 
$f''_{11}(a,b)<0$; $(a,b)$ is a local minimum if $D(a,b)>0$ and $f''_{11}(a,b)>0$; $(a,b)$ is a saddle point if $D(a,b)<0$. Furthermore, if $D(a,b)=0$,
then $(a,b)$ may be either maximum, minimum or even a saddle point, that is, the derivative test remains inconclusive in such cases.

In this example, it is easy to verify that there are four critical points $(0,0)$, $(0,-1)$, $(1,-1)$ and $\left(\frac{3}{8},-\frac{3}{4}\right)$.
The last point is a local maximum; $(0,-1)$ and $(1,-1)$ are saddle points, and the derivative test remains inconclusive about $(0,0)$.

We implement Algorithm \ref{algo:algo1} using the above conditions on $D(\bx^*)$ and $f''_{11}(\bx^*)>0$, along with the condition $\|\bof'(\bx^*)\|_2<\epsilon$,
with $\epsilon=1$, in the prior for $\bx^*=(x^*_1,x^*_2)$, for detection of maxima, minima, saddle points and inconclusiveness.  

We implement additive TMCMC with 
equal move-type probabilities for forward and backward transformations.
In these cases, at iteration $t=1,2,\ldots$, letting $\bx^{(t-1)}$ denote the TMCMC realization at iteration $t-1$, 
we draw $\varepsilon\sim N(0,1)$ and consider the transformation $\by=\bx^{(t-1)}+\bb|\varepsilon|$,
where, $\bb=(b_1,b_2)^T$ and each of $b_1$ and $b_2$ independently takes the values $1$ and $-1$ with equal probabilities. We set $\bx^{(t)}=\by$ with probability 
\begin{equation*}
	\alpha=\min\left\{1,\frac{\pi(\by)\pi(\bg'(\by)=\bzero|\bD_n,\by)}{\pi(\bx^{(t-1)})\pi(\bg'(\bx^{(t-1)})=\bzero|\bD_n,\bx^{(t-1)})}\right\},
	\label{eq:acc2}
\end{equation*}
and set $\bx^{(t)}=\bx^{(t-1)}$ with the remaining probability. Note that unlike the one-dimensional setup, this additive TMCMC is no longer equivalent to
random walk Metropolis (see \ctn{Dutta14} for details).

Implementation of Algorithm \ref{algo:algo1} for obtaining the maximum and the saddle points took about $7$ minutes to complete on our VMWare, implemented on $100$ cores. 

\subsubsection{Maximum}
\label{subsubsec:max}
Figure \ref{fig:ex3_max} displaying the TMCMC trace plots for maxima finding, indicates quite adequate mixing. 
Running step (2) of Algorithm \ref{algo:algo1} for $S=40$ steps yields
$\hat\bx_{max}=(0.376858, -0.752406)$, which is reasonably close to the true maximum.
\begin{figure}
	\centering
	\subfigure [TMCMC for maximum: first co-ordinate.]{ \label{fig:ex3_max1}
	\includegraphics[width=7.5cm,height=6.5cm]{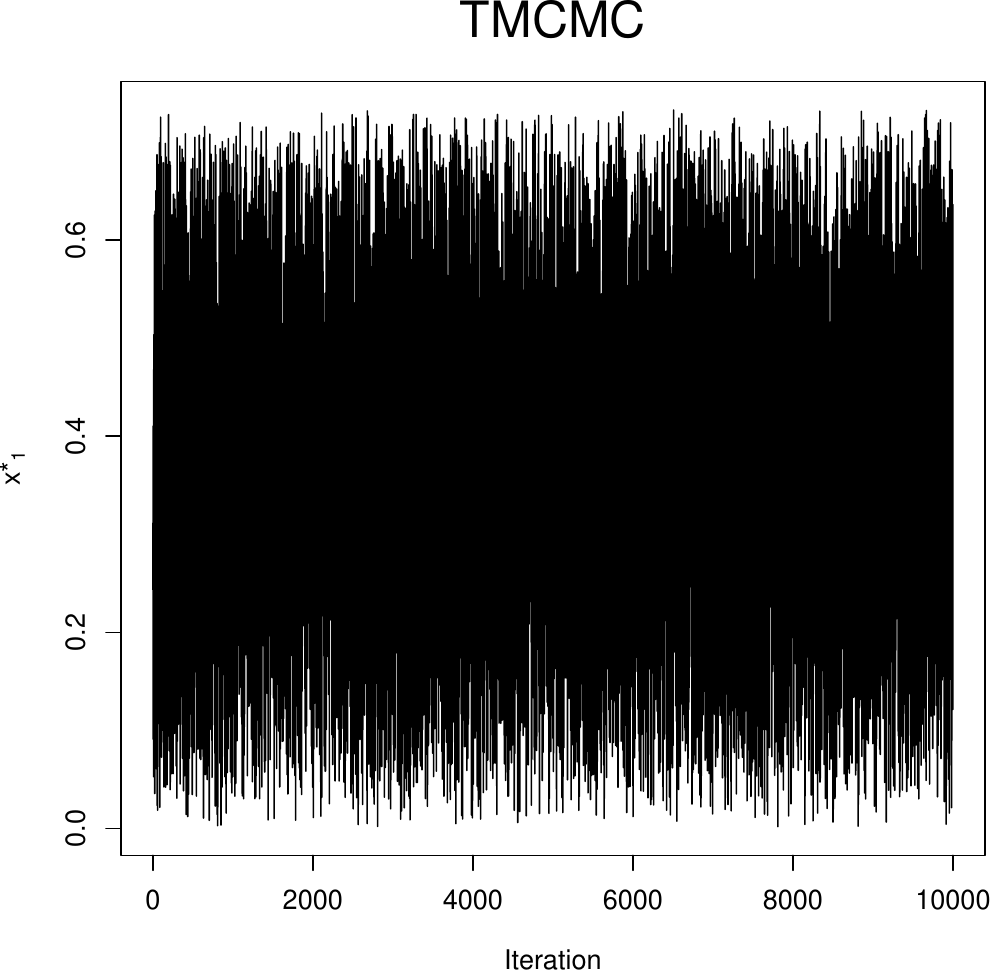}}
	\hspace{2mm}
	\subfigure [TMCMC for maximum: second co-ordinate.]{ \label{fig:ex3_max2}
	\includegraphics[width=7.5cm,height=6.5cm]{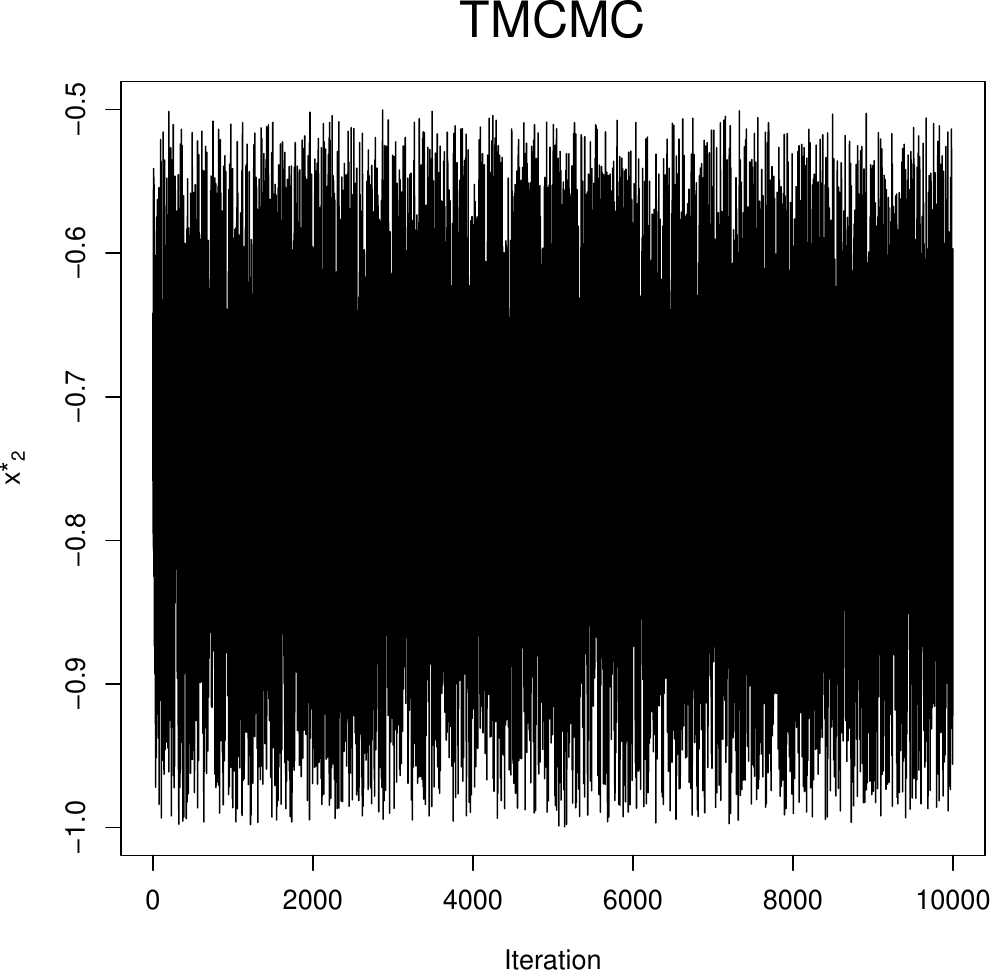}}
	\caption{TMCMC trace plots for Example 3 for finding maxima.}
	\label{fig:ex3_max}
\end{figure}

\subsubsection{Saddle points}
\label{subsubsec:saddle}
Our initial TMCMC investigations revealed two modal regions roughly around $(0.1,-1.1)$ and $(1.1,-1.1)$. 
For better exploration of the two modal regions, we implemented two separate TMCMC runs beginning at the above two points, and continued Algorithm \ref{algo:algo1}
to ultimately obtain two separate results after $S=40$ stages at step (2), associated with the two different starting points.
Figures \ref{fig:ex3_saddle1} and \ref{fig:ex3_saddle2} display the TMCMC trace plots associated with the two different starting points.  

Running step (2) of Algorithm \ref{algo:algo1} for $S=40$ steps yields
$\hat\bx^{(1)}_{saddle}=(-0.000309, -0.999580)$ and $\hat\bx^{(2)}_{saddle}=(0.999433, -0.999915)$ as estimates of two saddle points, 
which are both reasonably close to the true saddle points.
\begin{figure}
	\centering
	\subfigure [TMCMC for first saddle point: first co-ordinate.]{ \label{fig:ex3_sad1}
	\includegraphics[width=7.5cm,height=6.5cm]{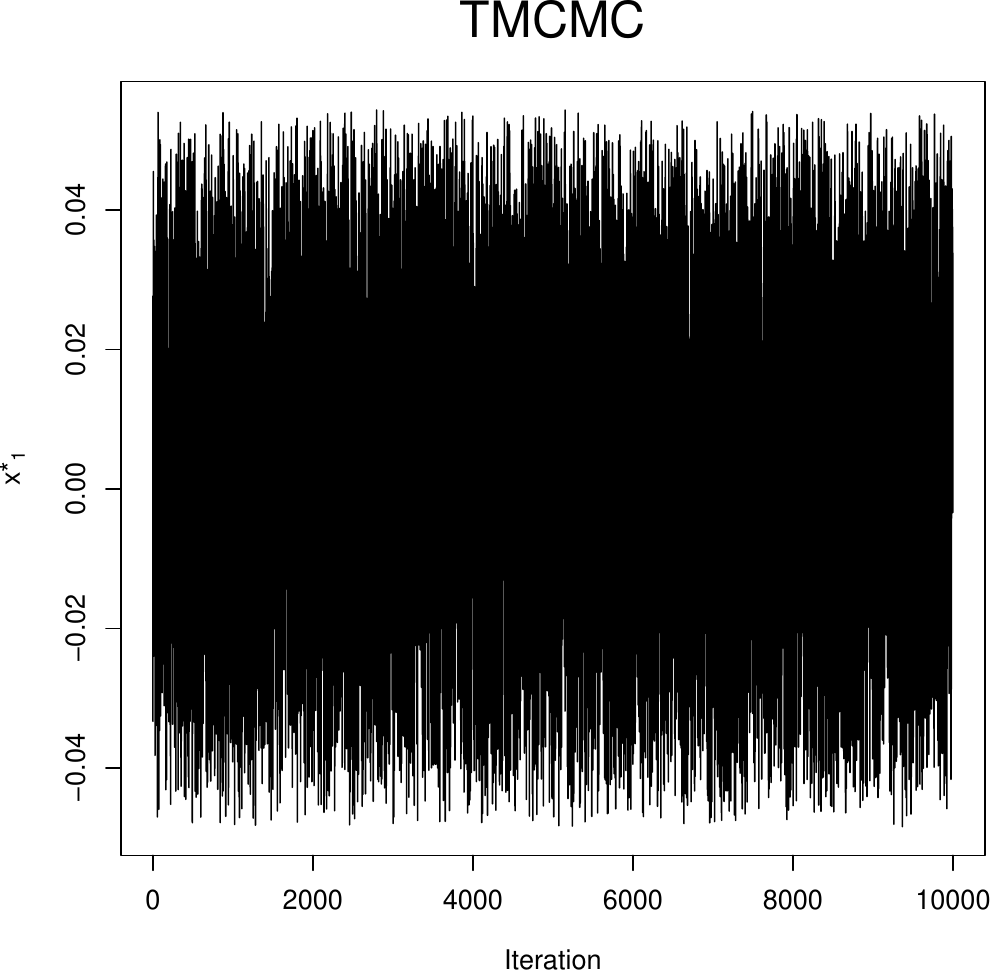}}
	\hspace{2mm}
	\subfigure [TMCMC for first saddle point: second co-ordinate.]{ \label{fig:ex3_sad2}
	\includegraphics[width=7.5cm,height=6.5cm]{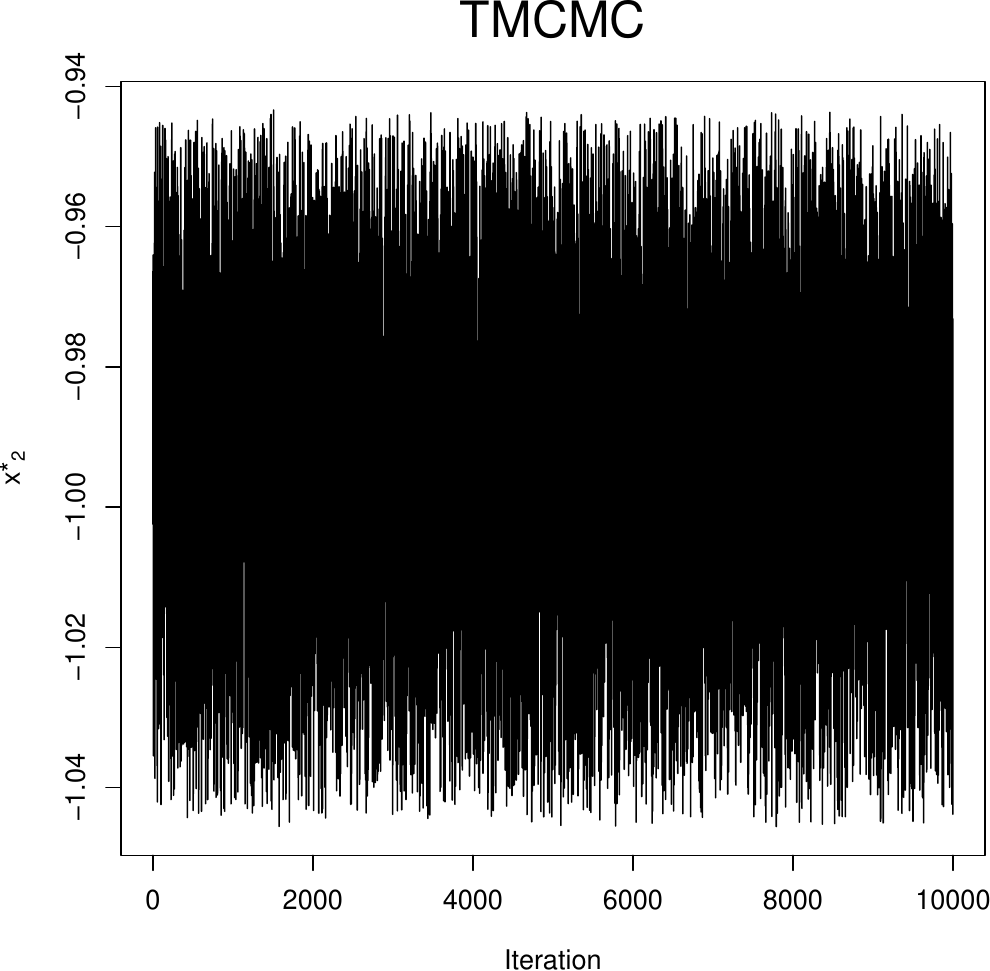}}
	\caption{TMCMC trace plots for Example 3 for finding the first saddle point.}
	\label{fig:ex3_saddle1}
\end{figure}

\begin{figure}
	\centering
	\subfigure [TMCMC for second saddle point: first co-ordinate.]{ \label{fig:ex3_sad3}
	\includegraphics[width=7.5cm,height=6.5cm]{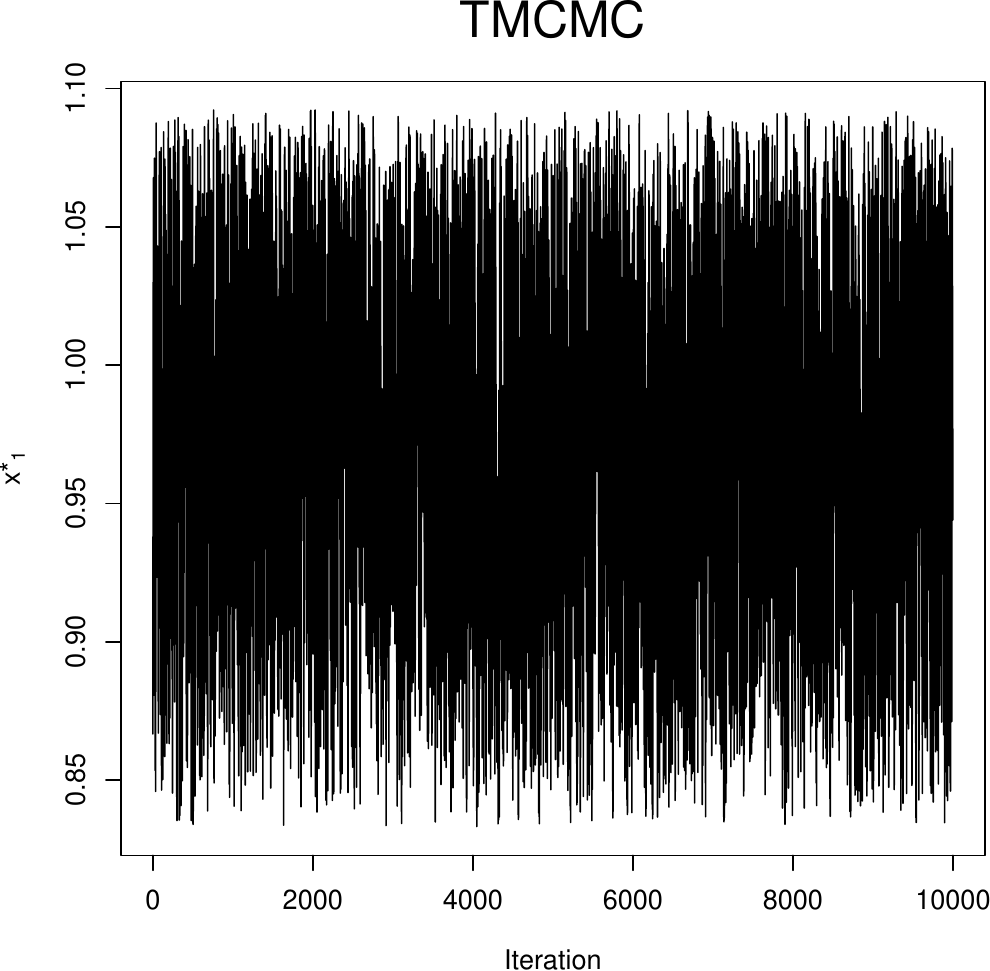}}
	\hspace{2mm}
	\subfigure [TMCMC for second saddle point: second co-ordinate.]{ \label{fig:ex3_sad4}
	\includegraphics[width=7.5cm,height=6.5cm]{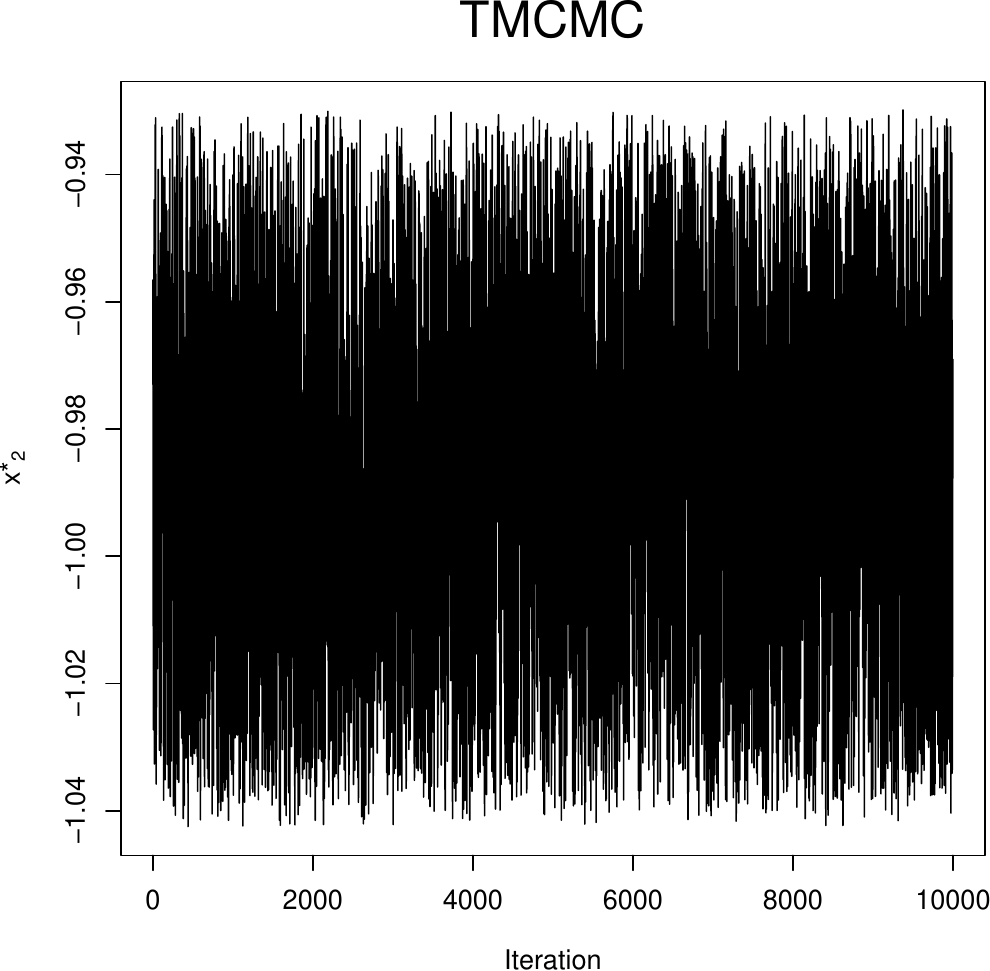}}
	\caption{TMCMC trace plots for Example 3 for finding the second saddle point.}
	\label{fig:ex3_saddle2}
\end{figure}

\subsubsection{Inconclusiveness}
\label{subsubsec:incon}
Investigation of situations where $D(a,b)=0$ for any critical point $(a,b)$ yielded the TMCMC trace plots displayed as Figure \ref{fig:ex3_incon}.
On completion of step (2) of Algorithm \ref{algo:algo1}, we obtain $\hat\bx_{incon}=(-0.000087, -0.000265)$ as the estimate of the stationary point
regarding which conclusion can not be drawn using the second derivatives. Observe that $\hat\bx_{incon}$ quite adequately estimates the actual point $(0,0)$
where conclusion fails.
\begin{figure}
	\centering
	\subfigure [TMCMC for inconclusiveness: first co-ordinate.]{ \label{fig:ex3_incon1}
	\includegraphics[width=7.5cm,height=6.5cm]{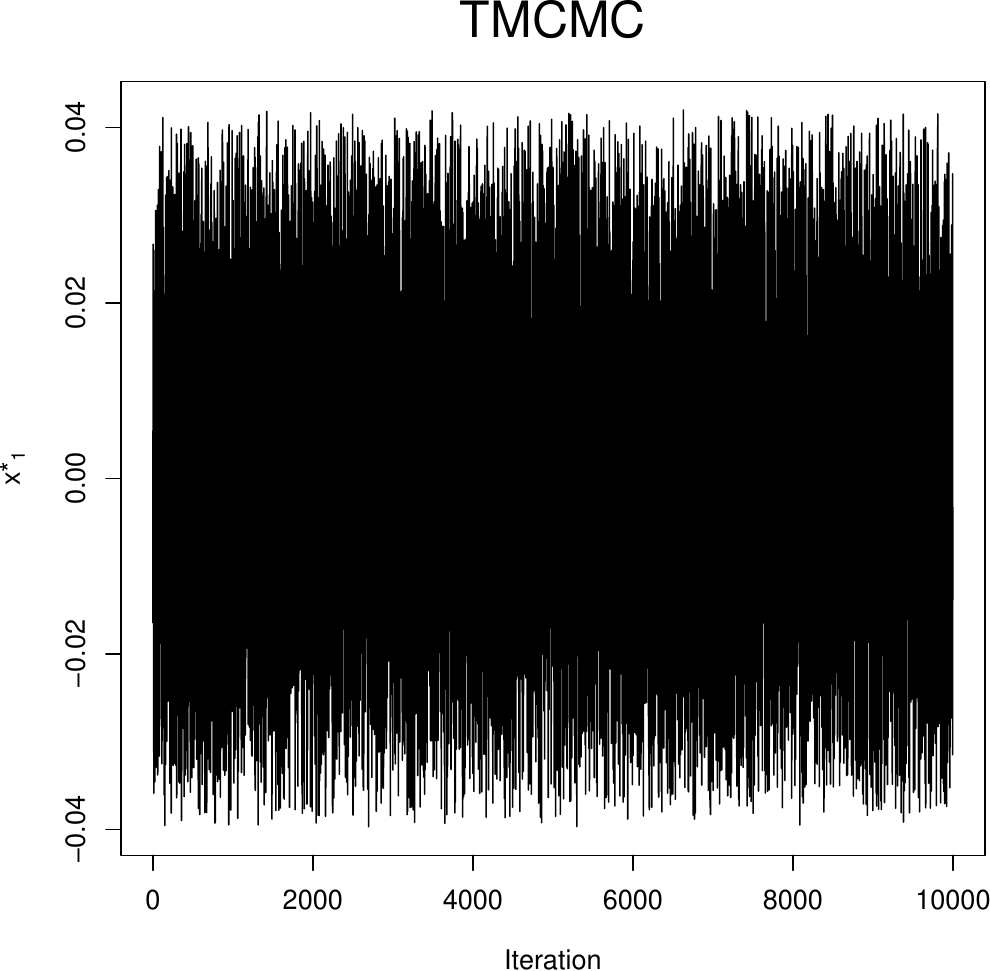}}
	\hspace{2mm}
	\subfigure [TMCMC for inconclusiveness: second co-ordinate.]{ \label{fig:ex3_incon2}
	\includegraphics[width=7.5cm,height=6.5cm]{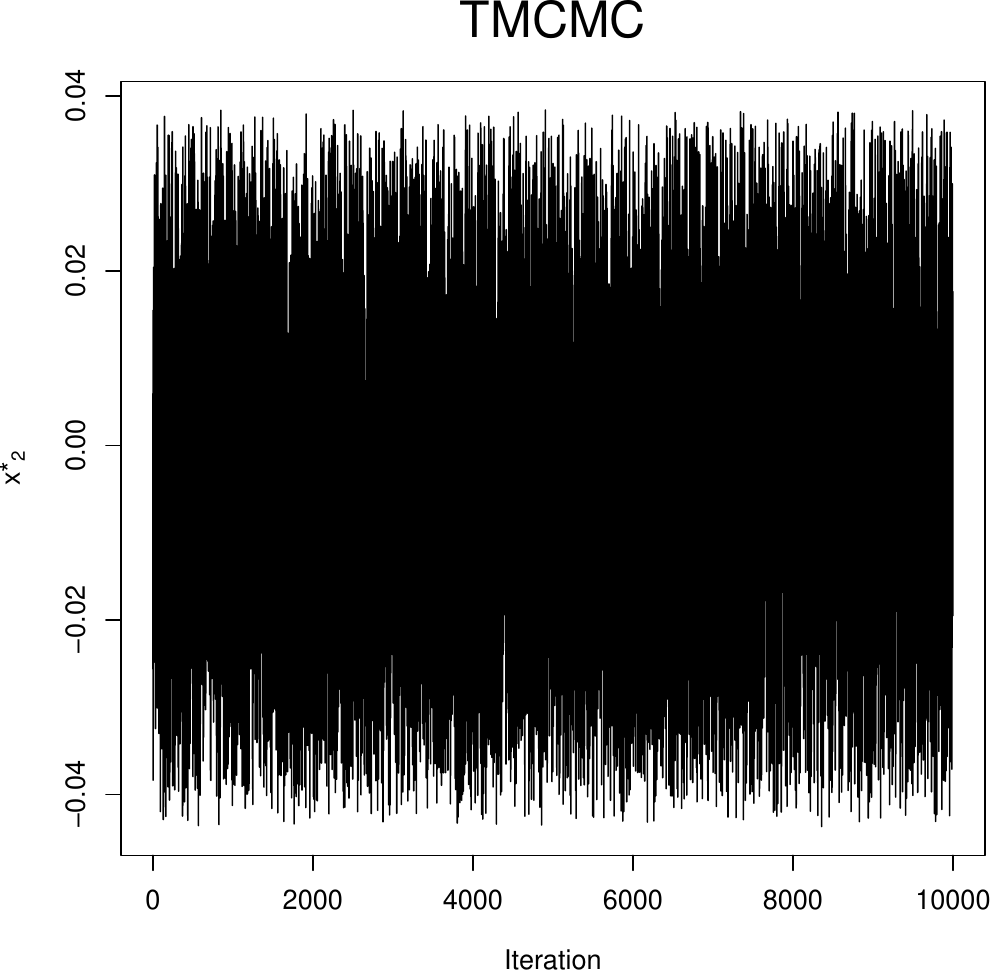}}
	\caption{TMCMC trace plots for Example 3 for investigating inconclusiveness.}
	\label{fig:ex3_incon}
\end{figure}

\subsubsection{Minimum}
\label{subsubsec:min}
Our attempt to implement TMCMC with the restrictions $D(\bx^*)>0$ and $f''_{11}(\bx^*)>0$ with $\|\bof'(\bx^*)\|_2<\epsilon$ did not yield any acceptance,
even for arbitrary initial values. In other words, we could not obtain any solution that satisfies all the above restrictions, and hence conclude that
there is no critical point on $\mathcal X$ that satisfy the above restrictions.

\subsection{Example 4}
\label{subsec:example4}
Table 14.2 of \ctn{Lange10} reports quarterly data on AIDS deaths in Australia during $1983-1986$, which is considered for illustration of fitting Poisson regression model.
Specifically, for $i=1,\ldots,14$, \ctn{Lange10} considers the model $Y_i\sim Poisson(\lambda_i)$, with $\lambda_i=\exp\left(\beta_0+i\beta_1\right)$.
\ctn{Lange10} computed the maximum likelihood estimate (MLE) of $\bbeta=(\beta_0,\beta_1)$ using Fisher's scoring method, which is equivalent to Newton's method in this case.
The final estimate obtained obtained by \ctn{Lange10} is $\hat\bbeta_{MLE}=(0.3396,0.2565)$. 

In our notation, the function to maximize is
$f(x_1,x_2)=-\sum_{i=1}^{14}\exp(x_1+ix_2)+\sum_{i=1}^{14}y_i(x_1+ix_2)$, with respect to $\bx=(x_1,x_2)$.
Note that this is a concave maximization problem and hence the second derivative is irrelevant.
We thus consider the only constraint $\|\bof'(\bx^*)\|_2<\epsilon$ for our implementation, with $\epsilon=1$. However, additive TMCMC did not exhibit adequate
mixing properties in this example, and hence we consider a mixture of additive and multiplicative TMCMC that is expected to improve mixing by using
a mixture of localised moves of additive TMCMC and non-local (``random dive") moves of multiplicative TMCMC (see \ctn{Dutta12}, \ctn{Dey16} for details). 
We strengthen the mixture TMCMC strategy with a further step of a mixture of specialized 
additive and multiplicative moves, which has parallels with \ctn{Liu00}. The detailed TMCMC algorithm, for general dimension $d$, is provided below as Algorithm \ref{algo:tmcmc}.

\begin{algorithm}
\caption{Mixture TMCMC}
\label{algo:tmcmc}
\begin{itemize}
	\item[(1)] Fix $p,q\in (0,1)$. Set an initial value $\bx^{(0)}$.
\item[(2)] For $t=1,\ldots,N$, do the following:
	\begin{enumerate}
		\item Generate $U\sim U(0,1)$. 
			\begin{enumerate}
				\item If $U<p$, then do the following:
	\begin{enumerate}
		\item[(i)] Generate $\varepsilon\sim N(0,1)$, $b_j\stackrel{iid}{\sim}U(\{-1,1\})$ for $j=1,\ldots,d$, and set
			$y_j=x^{(t-1)}_j+b_ja^{(1)}_j|\varepsilon|$, for $j=1,\ldots,d$. Here $a^{(1)}_j$ are positive scaling constants. 
		\item[(ii)] Evaluate 
                        \begin{equation*}
	                 \alpha_1=\min\left\{1,\frac{\pi(\by)\pi(\bg'(\by)=\bzero|\bD_n,\by)}{\pi(\bx^{(t-1)})\pi(\bg'(\bx^{(t-1)})=\bzero|\bD_n,\bx^{(t-1)})}\right\}.
                        \end{equation*}
		\item[(iii)] Set $\tilde\bx^{(t)}=\by$ with probability $\alpha_1$, else set $\tilde\bx^{(t)}=\bx^{(t-1)}$. 

	\end{enumerate}
\item If $U\geq p$, then
	\begin{enumerate}
		\item[(i)] Generate $\varepsilon\sim U(-1,1)$, $b_j\stackrel{iid}{\sim}U(\{-1,0,1\})$ for $j=1,\ldots,d$, and set
			$y_j=x^{(t-1)}_j\varepsilon$ if $b_j=1$, $y_j=x^{(t-1)}_j/\varepsilon$ if $b_j=-1$ and $y_j=x^{(t-1)}_j$ if $b_j=0$, for $j=1,\ldots,d$. 
			Calculate $|J|=|\varepsilon|^{\sum_{j=1}^db_j}$.
		\item[(ii)] Evaluate 
                        \begin{equation*}
	                 \alpha_2=\min\left\{1,\frac{\pi(\by)\pi(\bg'(\by)=\bzero|\bD_n,\by)}{\pi(\bx^{(t-1)})\pi(\bg'(\bx^{(t-1)})=\bzero|\bD_n,\bx^{(t-1)})}\times|J|\right\}.
                        \end{equation*}
		\item[(iii)] Set $\tilde\bx^{(t)}=\by$ with probability $\alpha_2$, else set $\tilde\bx^{(t)}=\bx^{(t-1)}$. 

	\end{enumerate}
			\end{enumerate}
		
\item Generate $U\sim U(0,1)$. 
			\begin{enumerate}
				\item If $U<q$, then do the following
	\begin{enumerate}
		\item[(i)] Generate $\tilde U\sim U(0,1)$ and $\varepsilon\sim N(0,1)$. If $\tilde U<1/2$, set
			$y_j=\tilde x^{(t)}_j+a^{(2)}_j|\varepsilon|$, for $j=1,\ldots,d$; else, set $y_j=\tilde x^{(t)}_j-a^{(2)}_j|\varepsilon|$, for $j=1,\ldots,d$. 
			Here $a^{(2)}_j$ are positive scaling constants. 
		\item[(ii)] Evaluate 
                        \begin{equation*}
	                \alpha_3=\min\left\{1,\frac{\pi(\by)\pi(\bg'(\by)=\bzero|\bD_n,\by)}{\pi(\tilde\bx^{(t)})\pi(\bg'(\tilde\bx^{(t)})=\bzero|\bD_n,\tilde\bx^{(t)})}\right\}.
                        \end{equation*}
		\item[(iii)] Set $\bx^{(t)}=\by$ with probability $\alpha_3$, else set $\bx^{(t)}=\tilde\bx^{(t)}$. 

	\end{enumerate}
\item If $U\geq q$, then
	\begin{enumerate}
		\item[(i)] Generate $\varepsilon\sim U(-1,1)$ and $\tilde U\sim U(0,1)$. If $\tilde U<1/2$, set
			$y_j=\tilde x^{(t)}_j\varepsilon$ for $j=1,\ldots,d$ and $|J|=|\varepsilon|^d$, else set $y_j=\tilde x^{(t)}_j/\varepsilon$ for $j=1,\ldots,d$ and 
			$|J|=|\varepsilon|^{-d}$.
		\item[(ii)] Evaluate 
               \begin{equation*}
	       \alpha_4=\min\left\{1,\frac{\pi(\by)\pi(\bg'(\by)=\bzero|\bD_n,\by)}{\pi(\tilde\bx^{(t)})\pi(\bg'(\tilde\bx^{(t)})=\bzero|\bD_n,\tilde\bx^{(t)})}\times|J|\right\}.
               \end{equation*}
	        \item[(iii)] Set $\bx^{(t)}=\by$ with probability $\alpha_4$, else set $\bx^{(t)}=\tilde\bx^{(t)}$. 

	\end{enumerate}
			\end{enumerate}
	\end{enumerate}
\item[(3)] Store the realizations $\left\{\bx^{(t)};t=0,1,\ldots,N\right\}$ for Bayesian inference. 
\end{itemize}
\end{algorithm}

In our case, $d=2$, and we choose $a^{(1)}_j=a^{(2)}_j=0.05$, for $j=1,2$; we also set $p=q=1/2$. 
However, in spite of such a sophisticated TMCMC algorithm, we failed to achieve excellent
mixing in this example, even with long TMCMC runs, discarding the first $10^6$ iterations and storing every $10$-th realization in the next $5\times 10^6$ iterations.
The trace plots of all stored $5\times 10^5$ realizations shown in Figure \ref{fig:ex4} indeed demonstrate that the TMCMC chain does not have excellent mixing properties. 
\begin{figure}
	\centering
	\subfigure [TMCMC for MLE: first co-ordinate.]{ \label{fig:ex4_1}
	\includegraphics[width=7.5cm,height=6.5cm]{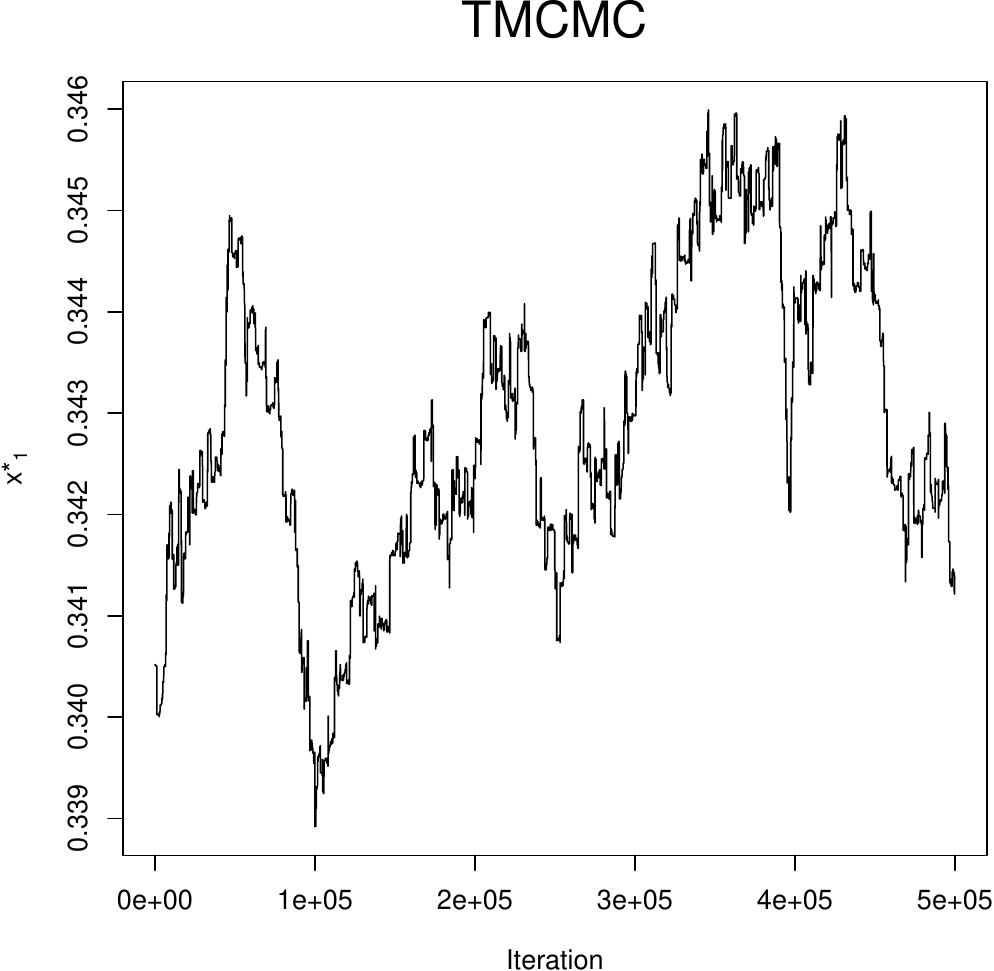}}
	\hspace{2mm}
	\subfigure [TMCMC for MLE: second co-ordinate.]{ \label{fig:ex4_2}
	\includegraphics[width=7.5cm,height=6.5cm]{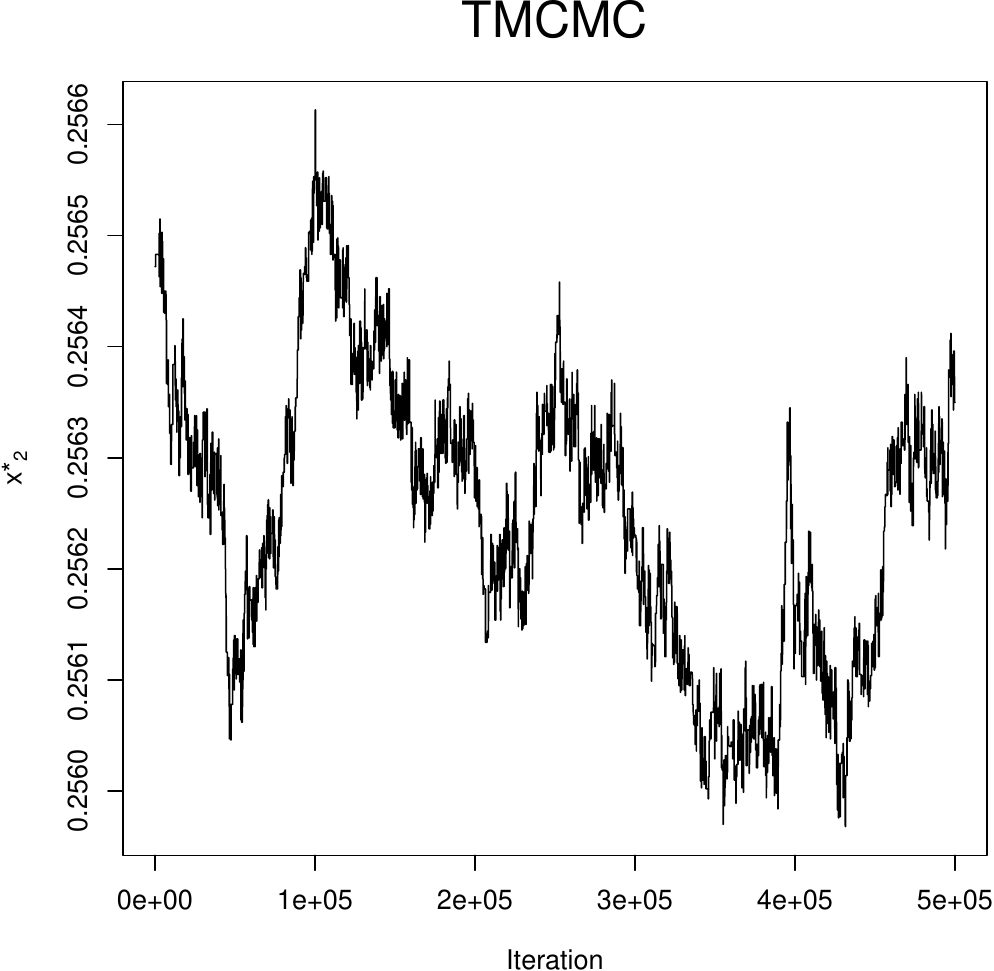}}
	\caption{TMCMC trace plots for Example 4 for finding MLE.}
	\label{fig:ex4}
\end{figure}
In fact, the trace plots correspond to a reasonable initial value, chosen as the optimum obtained by the Fisher scoring method, which we implemented in R
independently of \ctn{Lange10}. 
However, our goal here is to demonstrate that even when TMCMC mixing is less than ideal, the estimates obtained by our method can still significantly outperform existing techniques.

Since this is a concave maximization problem, step (2) of Algorithm~\ref{algo:algo1} is unnecessary. Following Remark~\ref{eq:remark3}, we set $i^* = \arg\min\{f(\mathbf{x}_i^*) : i = 1,\ldots,N\}$ and report $\mathbf{x}_{i^*}^*$ as the approximate maximizer of $f(\cdot)$. With the $N = 5\times10^5$ TMCMC realizations shown in Figure~\ref{fig:ex4}, we obtain the estimate $\hat{\mathbf{x}}_{\text{MLE}} = (0.339627,\,0.256524)$, for which $\|\mathbf{f}'(\hat{\mathbf{x}}_{\text{MLE}})\|_2 = 0.005108$. In contrast, the MLE obtained by Fisher's scoring (reported to four digits by \ctn{Lange10}) is $\hat{\boldsymbol{\beta}}_{\text{MLE}} = (0.339634,\,0.256524)$, with a gradient norm of $0.011791$ — substantially farther from zero. 

We also compared our approach with other popular optimization methods implemented in R: the BFGS (Broyden–Fletcher–Goldfarb–Shanno) method, simulated annealing followed by BFGS refinement, and a multi‑start quasi‑Newton method with 10 random starts uniformly sampled from $[-2,2]^2$. The BFGS and simulated annealing results were obtained using R's \texttt{optim} function with default settings. For the multi‑start quasi‑Newton method, we report the mean of the estimates from the 10 runs; we deliberately did not select the best solution because the other competing methods rely on single starts only. Thus, the multi‑start results are comparable to an average, not to a minimum‑norm solution over multiple starts.

Up to the first six decimal places, Fisher's scoring, BFGS, and simulated annealing produced identical optima. Table~\ref{tab:ex4_compare} shows that in every case our Bayesian procedure achieves a substantially smaller gradient norm. All methods completed in a few seconds, so we do not report run times; gradient norms were computed analytically.

Thus, our Bayesian approach proves more reliable than existing methods. This may be a general phenomenon: by starting from good initial values (for instance, optima obtained by popular methods), TMCMC can explore regions of $\mathcal{X}$ that almost surely contain points with smaller gradient norms than those found by the initial method. It is also encouraging that for this example, TMCMC took well under a minute to generate $6\times10^6$ iterations on a single processor of our VMWare system.

%
\begin{table}[ht]
\centering
\caption{Comparison of optimization methods for Example 4 (Poisson regression MLE). The true gradient norm at the optimum is zero.}
\label{tab:ex4_compare}
\begin{tabular}{lcccc}
\hline
	Method & $\hat\beta_0$ & $\hat\beta_1$ & $\|\nabla f\|_2$ & $f(\hat\beta_0,\hat\beta_1)$\\ \hline
	Fisher's scoring & 0.339634 & 0.256524 & 0.011791 & 472.062548\\
	BFGS  & 0.339634 & 0.256524 & 0.011791 & 472.062548\\
	Simulated annealing  & 0.339634 & 0.256524 & 0.011791 & 472.062548\\
	Multi-start quasi-Newton (10 starts) & 0.339850 & 0.256501 & 0.123481 & 472.062547\\
	Our method (TMCMC + posterior) & 0.339627 & 0.256524 & 0.005108 & 472.062548\\ \hline
\end{tabular}

{Our method achieves the smallest gradient norm, indicating closer proximity to the true MLE.}
\end{table}

\subsection{Example 5}
\label{subsec:example5}
\ctn{Hunter00} refer to a nonlinear optimization problem of the form $Y_i\sim N\left(\mu_i,\sigma^2\right)$ for $i=1,\ldots,m$, where
$$\mu_i=\sum_{j=1}^d\left[\exp\left(-z_{ij}\theta^2_j\right)+z_{ij}\theta_{d-j+1}\right];$$
$z_{ij}$ being the $i$-th observation of the $j$-th covariate, for $i=1,\ldots,m$ and $j=1,\ldots,d$.
The goal is to compute the MLE of $\btheta=(\theta_1,\ldots,\theta_d)$, assuming that $\sigma$ is known.
In our notation, the objective is to minimize
$$f(\bx)=\sum_{i=1}^m\left(y_i-\sum_{j=1}^d\left[\exp\left(-z_{ij}x^2_j\right)+z_{ij}x_{d-j+1}\right]\right)^2,$$
with respect to $\bx$.

We consider $5$ simulation experiments in this regard for $d=2,5,10,50,100$. In each case we generate $\theta_{0j}\sim U(-1,1)$ independently for $j=1,\ldots,d$, 
$z_{ij}\sim N(0,1)$ independently, for $i=1,\ldots,m$ and $j=1,\ldots,d$, and set, for $i=1,\ldots,m$, 
$\mu_{0i}=\sum_{j=1}^d\left[\exp\left(-z_{ij}\theta^2_{0j}\right)+z_{ij}\theta_{0,d-j+1}\right]$. We finally generate the response data by simulating
$Y_i\sim N\left(\mu_{0i},\sigma^2_0\right)$ independently for $i=1,\ldots,m$, where we set $\sigma^2_0=0.1$. For $d=2,5,10,50,100$, we generate datasets of
sizes $m=10,10,20,75,200$.

Note that this is not a convex minimization problem and the matrix of second derivatives $\bSigma''(\cdot)$ plays an important role, along with $\|\bof'(\cdot)\|_d$.
Thus it is important to check positive definiteness of $\bSigma''(\cdot)$ for any dimension $d$.
We use the LAPACK library function $``dpotrf"$ for Cholesky decomposition of $\bSigma''(\cdot)$, which contains a parameter $``info"$. Given any $\bx$, $info=0$ indicates 
positive definiteness of $\bSigma''(\bx)$, while other values of $info$ rules out positive definiteness.  
Note that since this is not a convex minimization problem, step (2) of Algorithm \ref{algo:algo1} is necessary, unlike in Example 4.

We implement the mixture TMCMC algorithm \ref{algo:tmcmc} for all values of $d$, with $p=q=1/2$ and set $a^{(1)}_j=a^{(2)}_j=0.05$ for $j=1,\ldots,d$.

\subsubsection{Case 1: $d=2$}
\label{subsubsec:case1}
In the TMCMC step, we set $\epsilon=1$ in the restriction $\|\bof'(\cdot)\|_2<\epsilon$. The trace plots, shown in Figure \ref{fig:ex5_nlm2}, exhibit adequate mixing.
Running step (2) of Algorithm \ref{algo:algo1} till $S=40$ stages with $\eta_k=1/(10+k-1)$ for $k=1,\ldots,S$, 
yielded the estimate of the MLE to be $\hat\bx_{MLE}=(0.678854,0.293575)$, for which 
$\|\bof'(\hat\bx_{MLE})\|_2=0.012514$. 
The exercise takes $3$ minutes on our VMWare, implemented in parallel on $100$ cores. 

However, examination of the samples obtained by importance resampling 
at the different stages of step (2) of Algorithm \ref{algo:algo1} did not reveal any evidence of multimodality, and hence it is pertinent to consider that estimate
which corresponds to the minimum of $\{\|\bof'(\bx^*_i)\|_2:i=1,\ldots,N\}$, where $\bx^*_i$ are the original TMCMC samples, with $N=50000$. As such,
we modify the previous estimate to $\hat\bx_{MLE}=(0.678912,0.293809)$, which yields $\|\bof'(\hat\bx_{MLE})\|_2=0.007221$, which is somewhat closer to zero
compared to that for the previous estimate. Note however, that the two estimates of MLE are quite close to each other.  
\begin{figure}
	\centering
	\subfigure [TMCMC for MLE: first co-ordinate.]{ \label{fig:ex5_nlm2_1}
	\includegraphics[width=7.5cm,height=6.5cm]{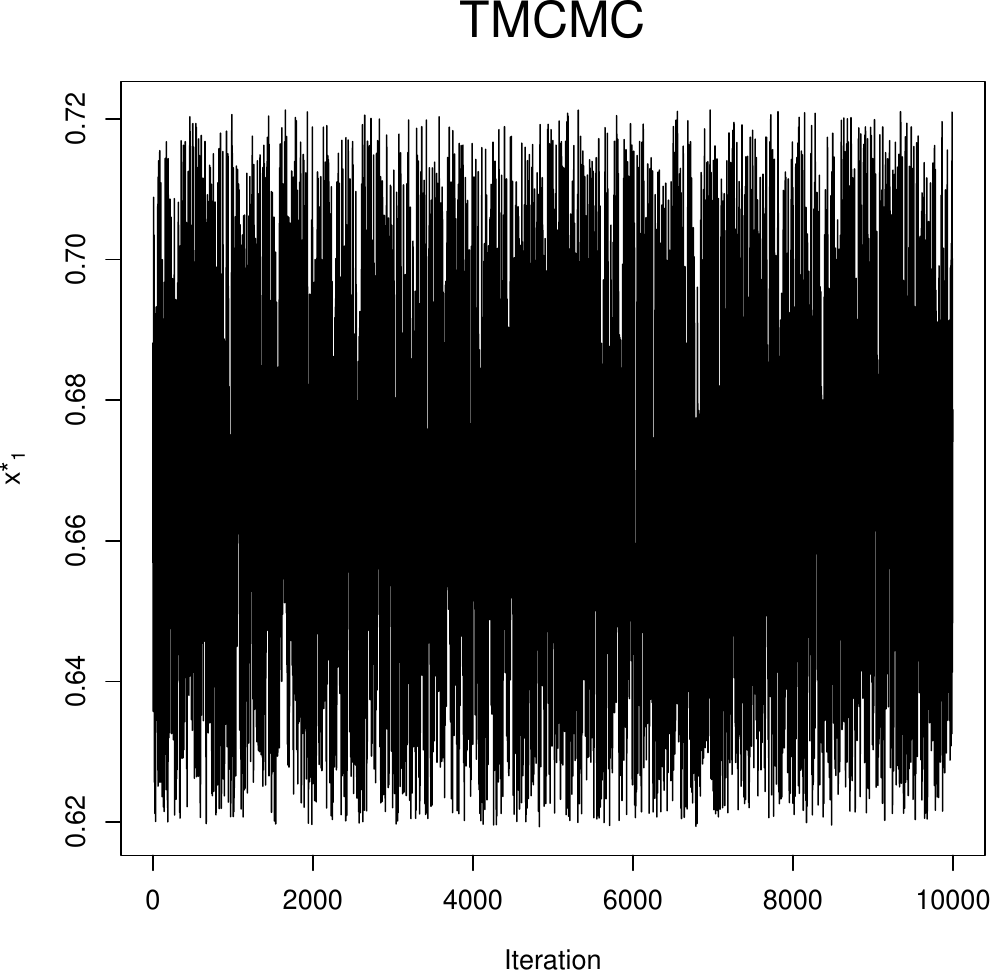}}
	\hspace{2mm}
	\subfigure [TMCMC for MLE: second co-ordinate.]{ \label{fig:ex5_nlm2_2}
	\includegraphics[width=7.5cm,height=6.5cm]{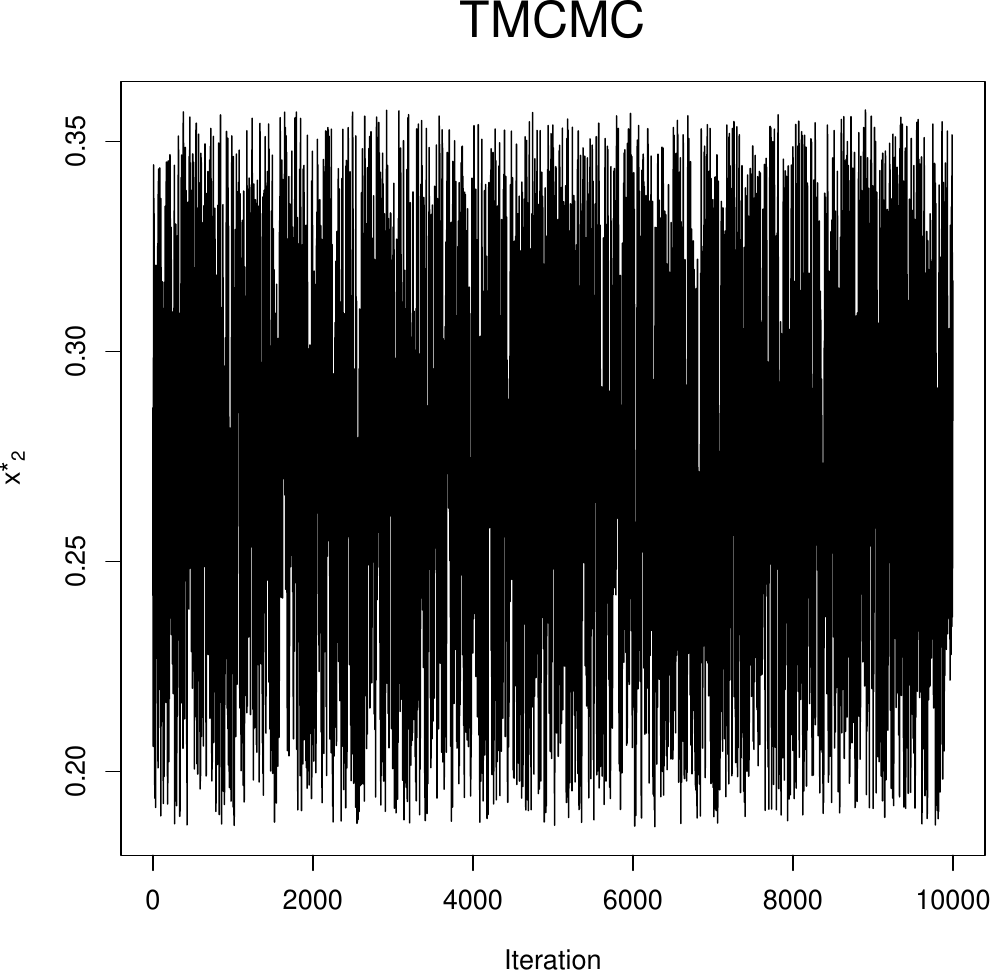}}
	\caption{TMCMC trace plots for Example 5 for finding MLE for dimension $d=2$.}
	\label{fig:ex5_nlm2}
\end{figure}

\subsubsection{Case 2: $d=5$}
\label{subsubsec:case2}

Here, for the $5$-dimensional TMCMC, we set $\epsilon=3$ in the restriction $\|\bof'(\cdot)\|_5<\epsilon$, as smaller values of $\epsilon$ led to poor convergence. 
Note that the Euclidean norm increases with dimension (see, for example, \ctn{Giraud15}), and so it is a natural requirement to increase $\epsilon$ 
as dimension increases. Similarly, we had to increase $\eta_k$ to $\eta_k=1.5/\log(10+k)$ for implementing step (2) of Algorithm \ref{algo:algo1}.  
The rest of the parameters of Algorithm \ref{algo:tmcmc} remain the same as for $d=2$.

The trace plots exhibited reasonable mixing; those for the
first and the last ($5$-th) co-ordinate of $\bx^*$ are depicted in Figure \ref{fig:ex5_nlm5}.
Running step (2) of Algorithm \ref{algo:algo1} till $S=40$ stages 
we obtain $\hat\bx_{MLE}=(0.663327, 0.431669, 0.045598, 0.239091, 0.301665)$, which corresponds to $\min\{\|\bof'(\bx^*_i)\|_5:i=1,\ldots,N\}\\=0.254217$, with $N=50000$. 
Note the increase in $\|\bof'(\cdot)\|_5$ compared to those of the smaller dimensions. Again, this is clearly to be expected because of the curse of dimensionality.
The exercise takes $4$ minutes to complete on our VMWare.
\begin{figure}
	\centering
	\subfigure [TMCMC for MLE: first co-ordinate.]{ \label{fig:ex5_nlm5_1}
	\includegraphics[width=7.5cm,height=6.5cm]{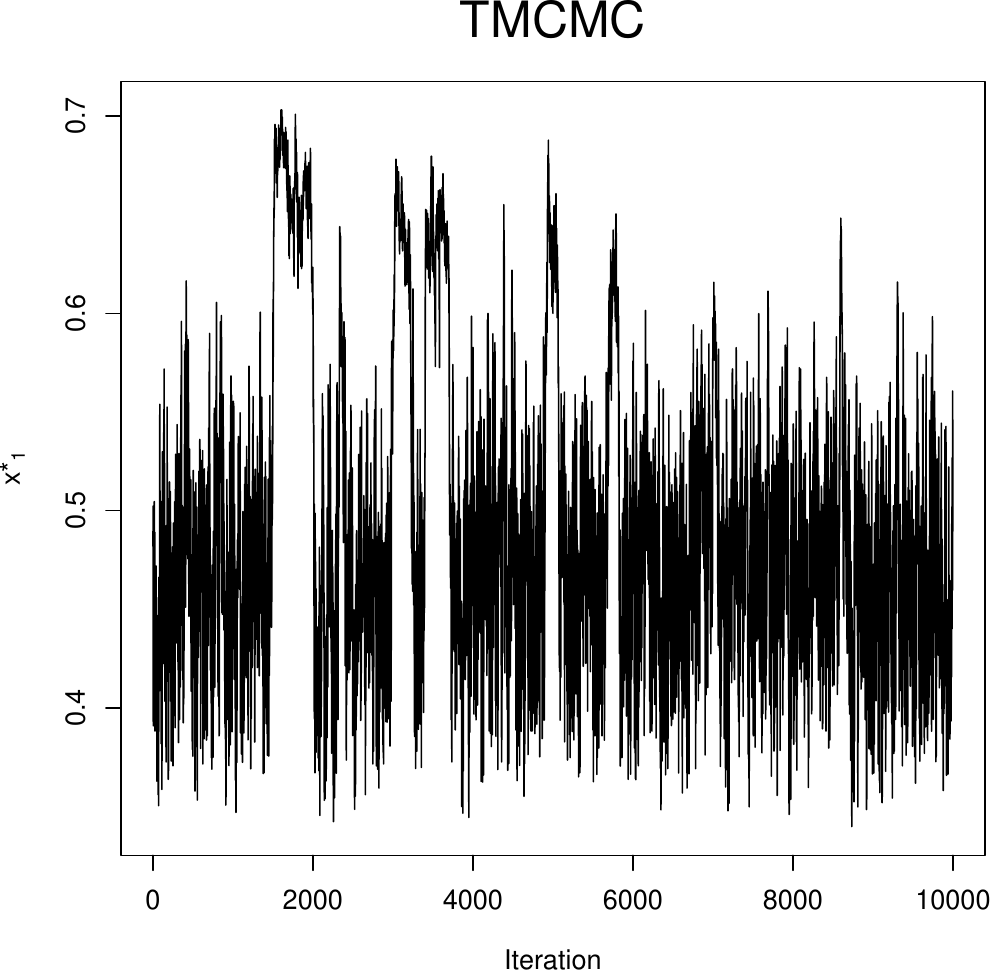}}
	\hspace{2mm}
	\subfigure [TMCMC for MLE: fifth co-ordinate.]{ \label{fig:ex5_nlm5_2}
	\includegraphics[width=7.5cm,height=6.5cm]{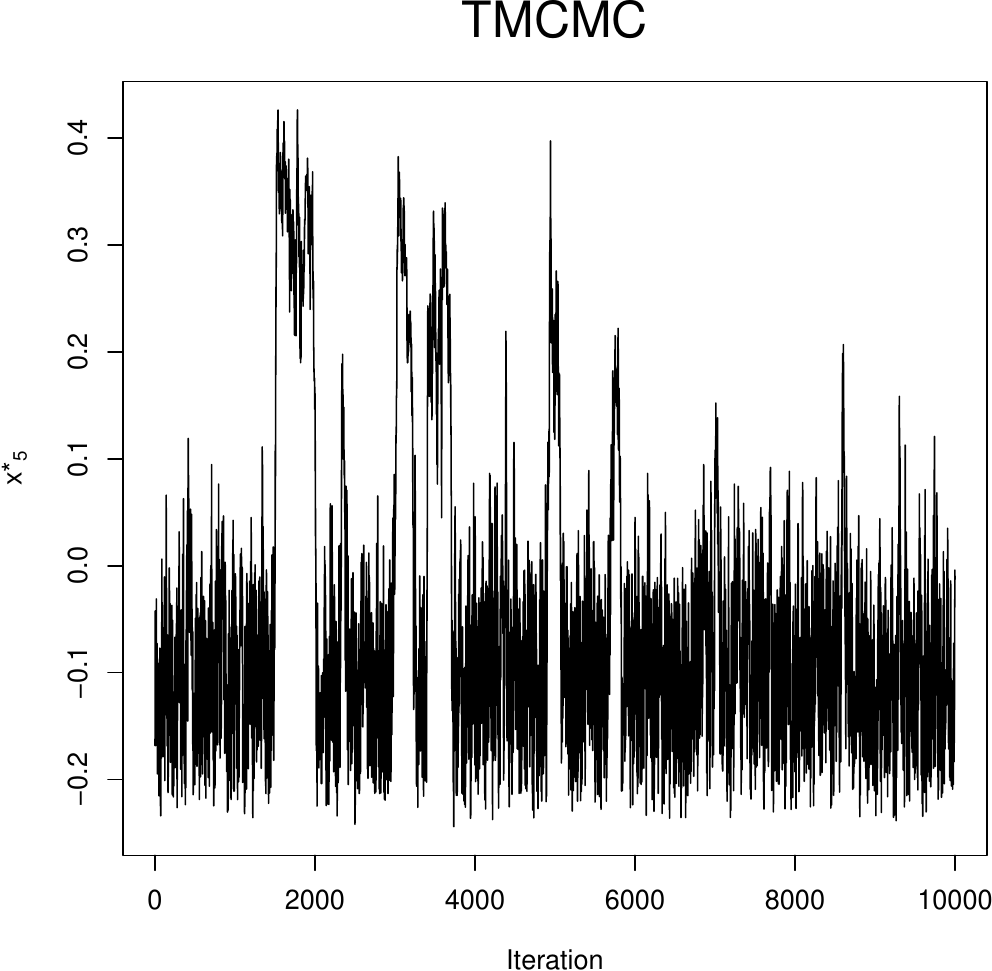}}
	\caption{TMCMC trace plots for Example 5 for finding MLE for dimension $d=5$.}
	\label{fig:ex5_nlm5}
\end{figure}

\subsubsection{Case 3: $d=10$}
\label{subsubsec:case3}
Here, for $d=10$, we had to set $\epsilon=6$ for the restriction $\|\bof'(\cdot)\|_{10}<\epsilon$ in TMCMC for adequate convergence.
We also set $\eta_k=7/\log(10+k-1)$ for implementing step (2) of Algorithm \ref{algo:algo1}.  

Our investigation shows that the mixing of TMCMC is not inadequate. The trace
plots of the first and the last co-ordinate of $\bx^*$ shown in Figure \ref{fig:ex5_nlm10} also bear evidence to this.
After implementing step (2) of Algorithm \ref{algo:algo1} till $S=40$ stages, we obtain
$\hat\bx_{MLE}=(0.472309, 0.10124, 0.079194, 0.108072, 0.096287,\\ -0.511566, 0.517567, -0.887624, 0.637796, -0.317721)$ with
$\|\bof'(\hat\bx_{MLE})\|_{10}=1.917746$. On the other hand, $\min\{\|\bof'(\bx^*_i)\|_{10}:i=1,\ldots,50000\}=1.902788$, which corresponds to
$\hat\bx_{MLE}=(0.471200, 0.100131, 0.078085, 0.101934, 0.095178, -0.504813, 4 0.516459, -0.888733, 0.631659,\\ -0.323859)$. Thus, both the estimates as well as the
corresponding gradients are quite close to each other. Again note the increase in $\|\bof'(\cdot)\|_{10}$ compared to those of the smaller dimensions.
\begin{figure}
	\centering
	\subfigure [TMCMC for MLE: first co-ordinate.]{ \label{fig:ex5_nlm10_1}
	\includegraphics[width=7.5cm,height=6.5cm]{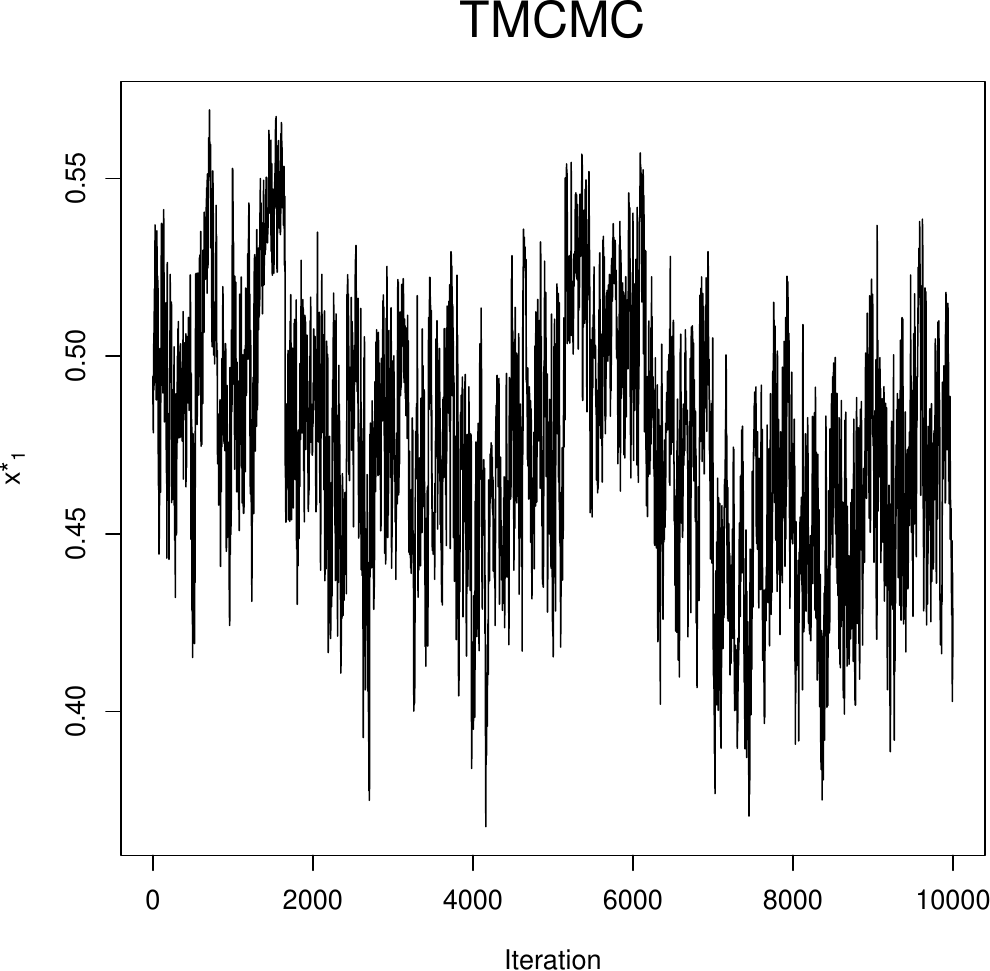}}
	\hspace{2mm}
	\subfigure [TMCMC for MLE: $10$-th co-ordinate.]{ \label{fig:ex5_nlm10_2}
	\includegraphics[width=7.5cm,height=6.5cm]{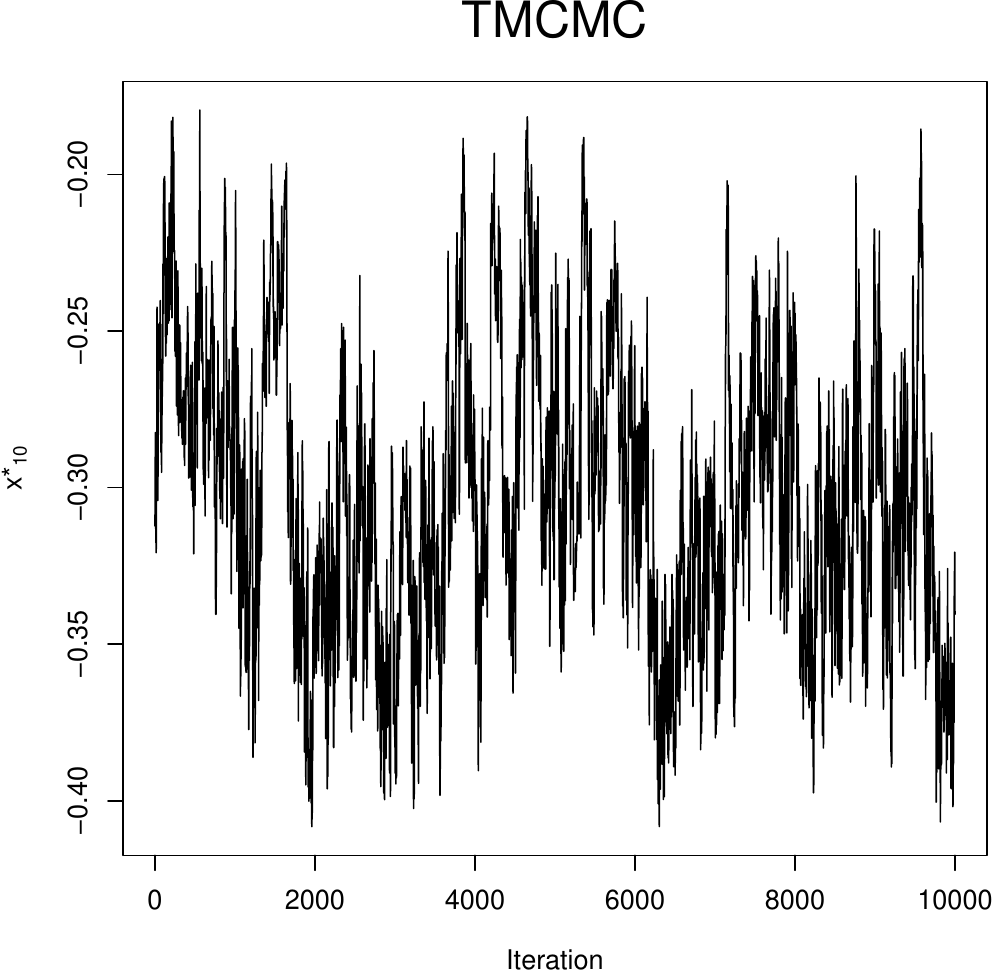}}
	\caption{TMCMC trace plots for Example 5 for finding MLE for dimension $d=10$.}
	\label{fig:ex5_nlm10}
\end{figure}
The entire exercise takes $17$ minutes on our VMWare to complete.

\subsubsection{Case 4: $d=50$}
\label{subsubsec:case4}

For this somewhat large dimension, we had to set $\epsilon=100$ and $\eta_k=200/\log(10+k-1)$.
Figure \ref{fig:ex5_nlm50}, which displays all stored $50000$ TMCMC realizations for $x^*_1$ and $x^*_{50}$ do not indicate excellent mixing, in spite
of the sophistication of Algorithm \ref{algo:tmcmc}. However, due to the high-dimension and complexity of the posterior this is not unexpected. As demonstrated
by Example 4, we can still expect to get closer to the MLE compared to other optimization methods. 
\begin{figure}
	\centering
	\subfigure [TMCMC for MLE: first co-ordinate.]{ \label{fig:ex5_nlm50_1}
	\includegraphics[width=7.5cm,height=6.5cm]{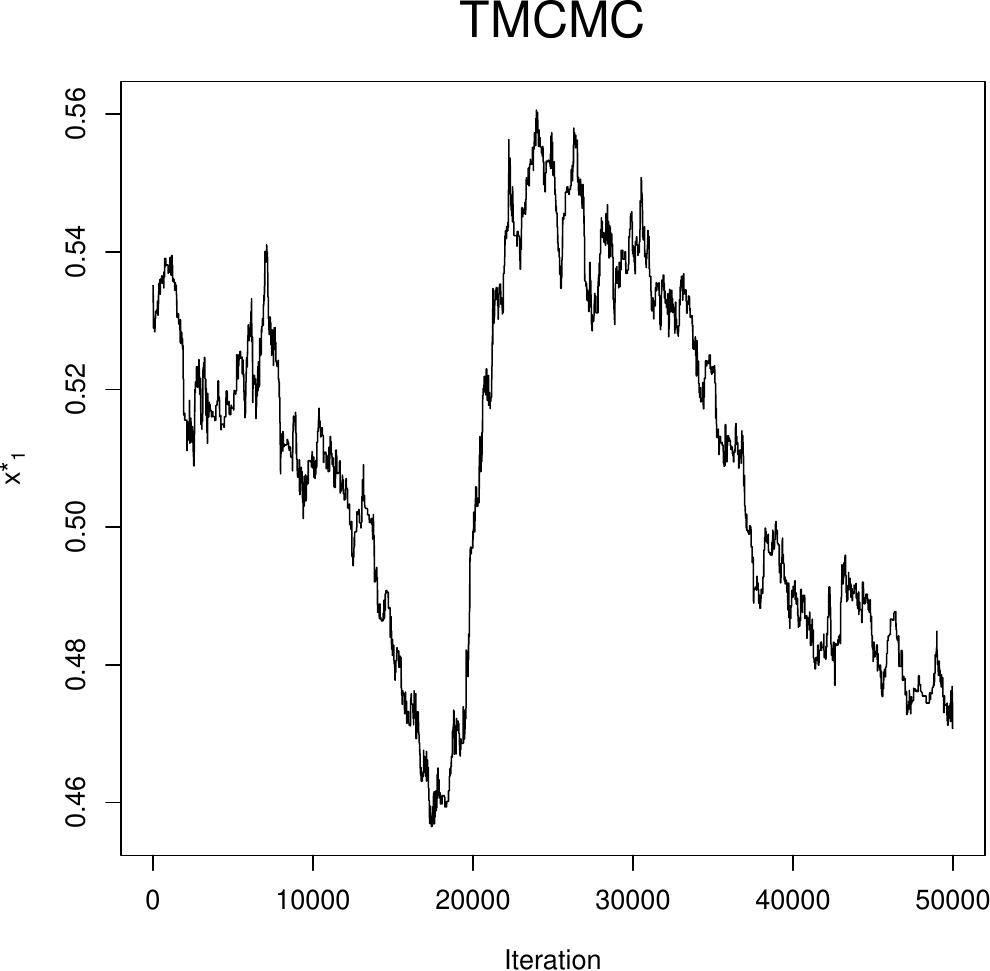}}
	\hspace{2mm}
	\subfigure [TMCMC for MLE: $50$-th co-ordinate.]{ \label{fig:ex5_nlm50_2}
	\includegraphics[width=7.5cm,height=6.5cm]{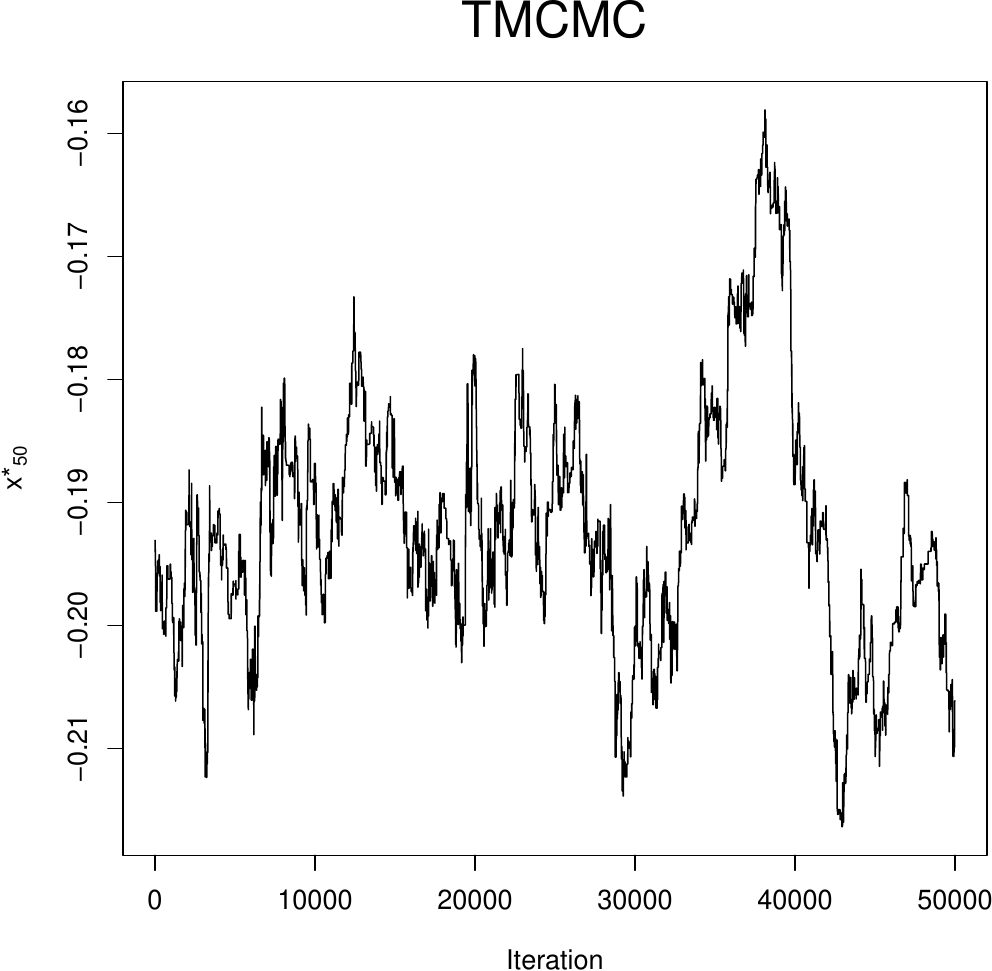}}
	\caption{TMCMC trace plots for Example 5 for finding MLE for dimension $d=50$.}
	\label{fig:ex5_nlm50}
\end{figure}
In this case, we obtain $\min\{\|\bof'(\bx^*_i)\|_{50}:i=1,\ldots,50000\}=73.05261$ while step (2) of Algorithm \ref{algo:algo1} implemented for $S=100$ stages,
of which only the first four stages consisted of positive $n_k$,
yielded $\|\bof'(\hat\bx_{MLE})\|_{50}=76.28244$. Thus, the norms of the gradients have increased considerably in this high dimension, compared to the previous
$d=2,5,10$. Given the high dimension in this problem, the above gradient values $73.05261$ and $76.28244$ are quite close. 

The TMCMC implementation took about $12$ hours on a single core in our VMWare and step (2) of Algorithm \ref{algo:algo1} took additional $2$ hours on $100$ cores. 

\subsubsection{Case 5: $d=100$}
\label{subsubsec:case5}
The curse of dimensionality now forced us to set $\epsilon=400$ for TMCMC convergence and $\eta_k=850/\log(10+k-1)$. 
It took around $15$ hours to complete the TMCMC run; the trace plots shown in Figure \ref{fig:ex5_nlm100} do not bear evidence of non-convergence, even though mixing is
expectedly inadequate in such high dimension. 
\begin{figure}
	\centering
	\subfigure [TMCMC for MLE: first co-ordinate.]{ \label{fig:ex5_nlm100_1}
	\includegraphics[width=7.5cm,height=6.5cm]{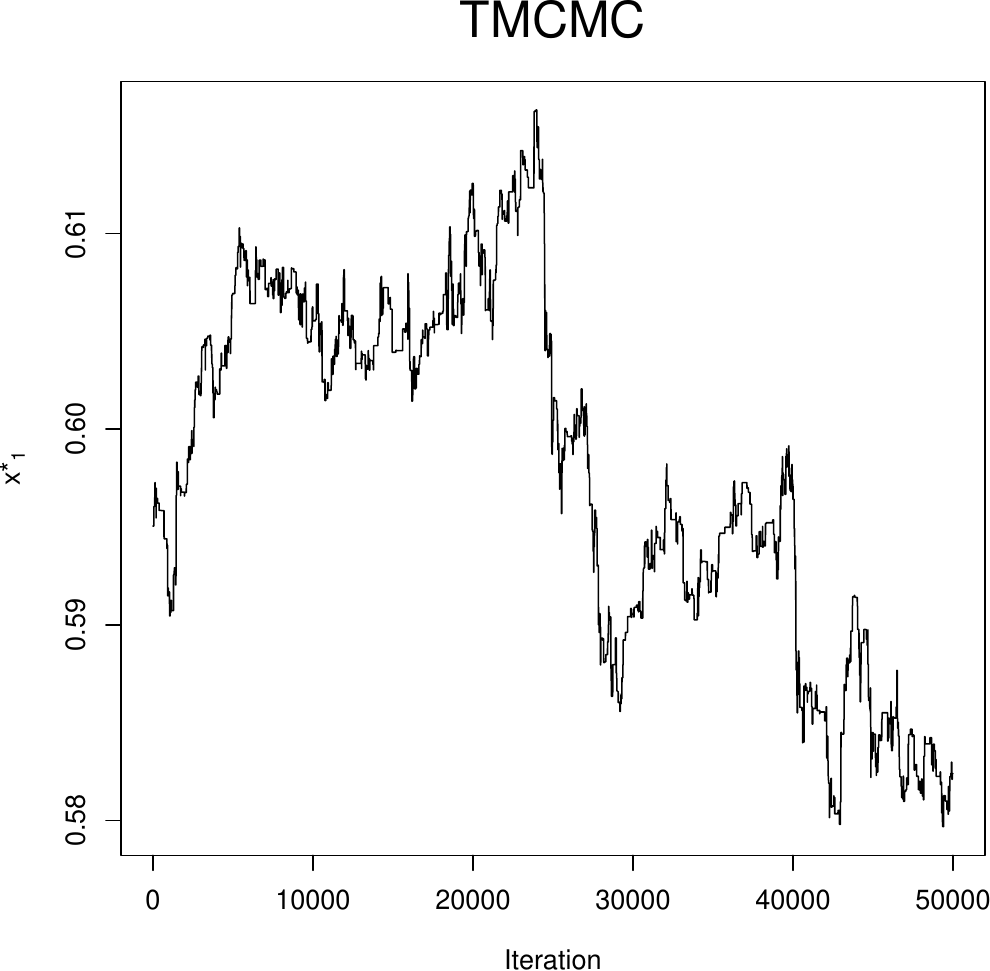}}
	\hspace{2mm}
	\subfigure [TMCMC for MLE: $100$-th co-ordinate.]{ \label{fig:ex5_nlm100_2}
	\includegraphics[width=7.5cm,height=6.5cm]{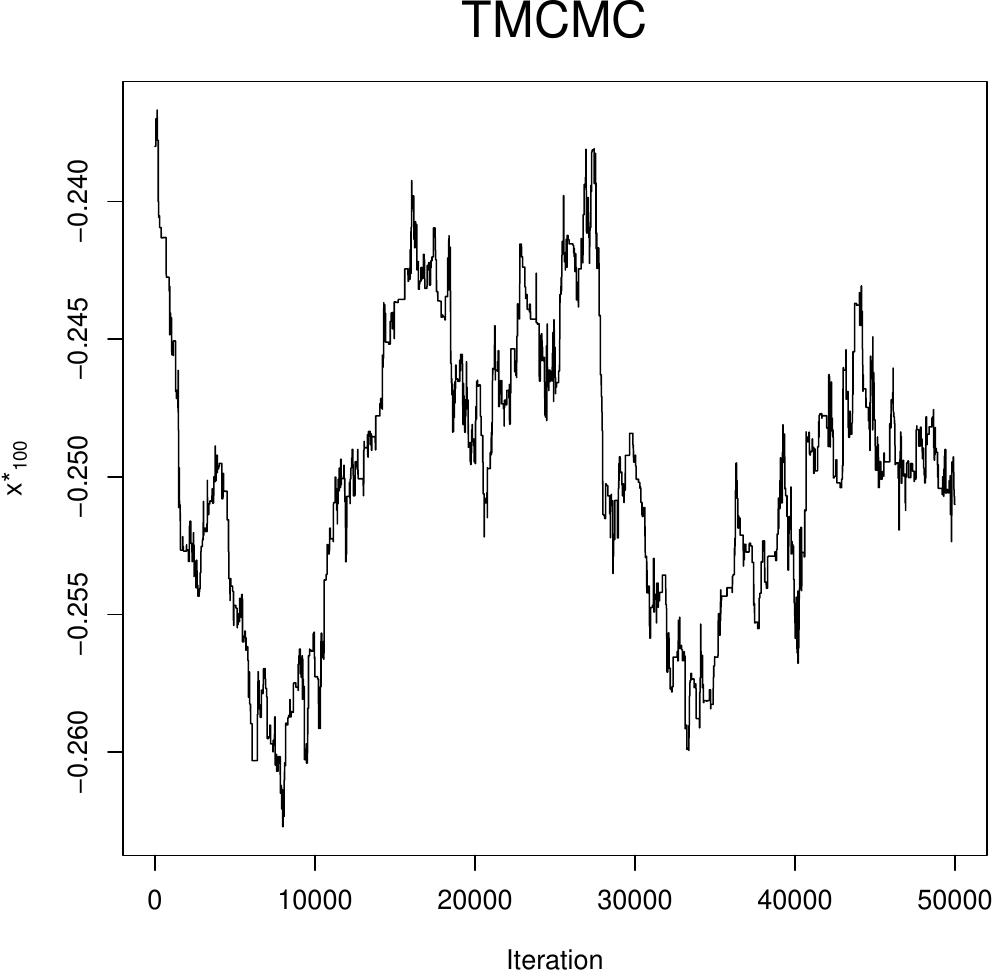}}
	\caption{TMCMC trace plots for Example 5 for finding MLE for dimension $d=100$.}
	\label{fig:ex5_nlm100}
\end{figure}
Here we obtain $\min\{\|\bof'(\bx^*_i)\|_{50}:i=1,\ldots,50000\}=330.4483$, while step (2) of Algorithm \ref{algo:algo1} implemented for till $S=100$ stages, 
of which only the first three stages yielded positive $n_k$, produced $\hat\bx_{MLE}$ such that $\|\bof'(\hat\bx_{MLE})\|_{100}=341.2009$. Given the high dimension, 
this value is quite close to the above minimum gradient.  
Implementation of step (2) of Algorithm \ref{algo:algo1} took $2$ hours $36$ minutes on $100$ cores on our VMWare.

\section{Summary and conclusion}
\label{sec:conclusion}

\subsection{The key ideas in a nutshell}

In this article, we have proposed and developed a novel Bayesian algorithm for general function optimization that judiciously exploits the derivatives of the 
objective function in conjunction with the posterior Gaussian derivative process, given data consisting of input points from the function domain and their 
corresponding function evaluations. The posterior simulation approach inherent to our method ensures improved accuracy of the obtained solutions compared to 
existing optimization algorithms. Another important feature is that, for any desired degree of accuracy, Bayesian credible regions of the optima are readily 
available at any stage of the algorithm.

Under an appropriate fixed‑domain infill asymptotics setup, we prove that the algorithm converges almost surely to the true optima. Along the way, we establish 
almost sure uniform convergence of the posteriors corresponding to the Gaussian process and its derivative process to the objective function and its derivatives, 
and we provide rates of convergence for a specific infill design. Furthermore, we present a Bayesian characterization of the number of optima of the objective 
function by exploiting the information contained in our algorithm.

\subsection{High-dimensional and mixing issues – i.i.d. sampling as an alternative to TMCMC}

The application of our Bayesian optimization algorithm to a variety of examples, ranging from simple to challenging, has produced encouraging and insightful results. The choice of initial values for the TMCMC sampler can affect mixing; however, as argued in the paper, the optima obtained by good existing optimization algorithms can serve as effective initial values for TMCMC. Moreover, we have shown that even when mixing is less than ideal, our Bayesian algorithm can still significantly outperform popular optimization methods. Consequently, the TMCMC mixing issue may not be critical, at least for low‑dimensional problems.

Dimensionality poses a much more serious challenge. Our experimental results demonstrate that as the dimension increases, the accuracy of the algorithm deteriorates. High dimensionality also severely impairs TMCMC mixing by excessively restricting the input space through the prior constraints. The reason is that the prior requires the gradient norm to be small and the Hessian positive definite; in high dimensions, the region of the parameter space that satisfies these conditions is extremely small, making it difficult for a Markov chain to explore effectively.

Fortunately, the TMCMC mixing problem can be completely circumvented by replacing TMCMC with the general i.i.d. sampling procedure developed by \ctn{bhattacharya2025iid} (see also \ctn{bhattacharya2021multimodal}, \ctn{bhattacharya2021doubly}, \ctn{bhattacharya2022dirichlet}, \ctn{bhattacharya2025flows} for extensions). Using i.i.d. sampling, sampling accuracy can be maintained almost perfectly even as the dimension increases, which is expected to dramatically improve the quality of the optima obtained by our method. This approach eliminates the need for careful tuning of proposal distributions and the risk of poor mixing, making it particularly attractive for high‑dimensional optimization problems.

\subsection{Towards black‑box optimization: a natural extension}

Although the current work assumes that the first and second partial derivatives of the objective function are available, a natural and promising extension is to adapt the algorithm to settings where \(f\) is a black box – only function evaluations are possible, and derivatives are unavailable. In this case, one can replace the true derivatives \(\mathbf{f}'\) and \(\mathbf{\Sigma}''\) in the prior and the refinement step with their Gaussian process surrogates \(\mathbf{g}'_n\) and the GP‑based Hessian. Because Theorems~\ref{theorem:infill2} and~\ref{theorem:theorem1} establish that \(\mathbf{g}'_n\) converges uniformly to \(\mathbf{f}'\) as the dataset grows, the modified algorithm would still converge to the true stationary points of \(f\). The refinement condition \(\|\mathbf{f}'(\mathbf{x})\|_d < \eta_k\) would be replaced by \(\|\mathbf{g}'_n(\mathbf{x})\|_d < \eta_k\), and the Hessian check would use the GP posterior Hessian. This adaptation turns our derivative‑based method into a general‑purpose, provably convergent black‑box optimization algorithm – a rare combination in the Bayesian optimization literature. We intend to pursue this direction in future work, providing rigorous convergence guarantees and demonstrating its performance on real‑world black‑box problems.

\subsection{Discussion of scalability and potential extensions}

The cubic complexity \(O(n^3)\) of the Gaussian process posterior, which arises from the inversion of the \(n\times n\) correlation matrix \(\boldsymbol{\Sigma}_{22}\), limits the method to moderate dataset sizes \(n\) (typically \(n \le 500\) in our experiments). For high‑dimensional problems, the number of refinement stages \(k\) is small and the dataset size \(n\) remains modest. To scale the method to very large input dimensions or very large dataset sizes, several extensions could be explored.

One promising direction is the use of sparse Gaussian process approximations, such as the fully independent training conditional (FITC) method of \ctn{Quinonero05}. By introducing a small set of \(m\) inducing points with \(m \ll n\), the complexity of the GP posterior reduces from \(O(n^3)\) to \(O(n m^2)\), making it feasible to handle larger datasets. Another approach is to employ additive kernels, where the covariance function is assumed to decompose as a sum of one‑dimensional kernels: \(c(\mathbf{x}, \mathbf{y}) = \sum_{k=1}^d c_k(x_k, y_k)\). This additive structure allows separate Gaussian process modeling per dimension, effectively reducing the dimensionality of the problem and enabling more efficient computation; see \ctn{Duvenaud11}.

Random Fourier features offer an alternative that approximates the covariance kernel using a finite set of random basis functions; see \ctn{Rahimi07}. This approximation yields linear complexity in the number of data points \(n\) and is particularly suitable for high‑dimensional problems, as the number of features needed to achieve a given accuracy does not depend exponentially on \(d\). Finally, if the covariance kernel is separable, the posterior of the derivative process factorizes across dimensions. This factorization can be exploited for parallel or distributed computation, where each dimension is processed independently on a separate processor, leading to significant speedups in high dimensions.

These extensions, together with the black‑box adaptation and i.i.d. sampling, constitute a rich agenda for future research. They would enable the application of our Bayesian optimization method to very high‑dimensional optimization problems and to problems with large numbers of data points, while retaining the theoretical guarantees that are the hallmark of our approach.

\section*{Acknowledgment}
We sincerely thank the reviewer whose comments have led to significant enhancement of the quality of our manuscript.

\section*{Conflict of interest}
The authors declare no conflicts of interest.

\section*{Data and code availability statement}
Our MPI-based parallel C code for implementing the AIDS data problem is available at \url{https://github.com/Sourabh-Bhattacharya/FUNCTION_OPT_GDP}.
The data is contained in the code itself, and is also available in \ctn{Lange10}.

\appendix
\section{Computational complexity of Algorithm \ref{algo:algo1}}
\label{app:complexity}

\subsection{Preliminary notation}

\begin{itemize}
    \item $d$ – dimension of the input space $\mathcal{X}$.
    \item $n_0$ – size of the initial dataset $\mathcal{D}_{n_0}$.
    \item $N_{\text{TMCMC}}$ – number of TMCMC iterations performed in step (1) (including burn‑in and thinning).
 \item $N$ – number of stored samples after thinning (for example, $N=50000$).
 \item $M$ – number of points resampled at each stage of step (2) (for example, $M=1000$).
    \item $K$ – total number of refinement stages.
    \item $L$ – maximum number of points augmented per stage (typically $L=5$).
    \item $n_k$ – number of points actually augmented at stage $k$ ($0\le n_k\le L$).
    \item $N_k$ – dataset size before stage $k$; $N_1 = n_0$, $N_{k+1}=N_k+n_k$.
    \item $N_{\max} = \max_k N_k \le n_0 + LK$.
    \item $K_{\text{aug}} = \#\{k: n_k>0\}$ – number of stages where augmentation occurs.
\end{itemize}

\subsection{Assumptions}

\begin{enumerate}
	\item[(i)] \textit{Dense linear algebra.} All matrix operations (multiplication, inversion, Cholesky decomposition) use standard algorithms. Multiplication of a $p\times q$ matrix by a $q\times r$ matrix costs $O(pqr)$; inversion of an $m\times m$ matrix costs $O(m^3)$; Cholesky decomposition costs $O(m^3)$.
	\item[(ii)] \textit{Analytical derivatives.} The first and second partial derivatives of the objective function $f$ are available in closed form. Evaluating the gradient $\mathbf{f}'(\mathbf{x})$ costs $O(d)$, assembling the Hessian $\mathbf{\Sigma}''(\mathbf{x})$ costs $O(d^2)$, and testing positive definiteness via Cholesky decomposition costs $O(d^3)$.
	\item[(iii)] \textit{Dataset augmentation.} When new points are added to the dataset, the correlation matrix $\boldsymbol{\Sigma}_{22}$ and its inverse are recomputed from scratch (no low‑rank updates).
	\item[(iv)] \textit{TMCMC implementation.} In each TMCMC iteration, 
	    the density of the current point is stored. Hence only one density evaluation (for the proposal) is performed per iteration.
    We assume additive TMCMC for simplicity.
\end{enumerate}

\subsection{Cost of one posterior density evaluation}
\subsubsection{Precomputed quantities}
Before the TMCMC loop, the following matrices are computed once:
\[
\mathbf{P} = \bigl(\mathbf{H}^T \boldsymbol{\Sigma}_{22}^{-1} \mathbf{H} + \boldsymbol{\Sigma}_0^{-1}\bigr)^{-1},
\]
and the vector \(\mathbf{v} = \boldsymbol{\Sigma}_{22}^{-1}(\mathbf{f}_n - \mathbf{H}\hat{\boldsymbol{\beta}})\), where \(\hat{\boldsymbol{\beta}}\) is the posterior mean of \(\boldsymbol{\beta}\). These quantities do not depend on the candidate point \(\mathbf{x}\) and are reused throughout.
\begin{lemma}
	\label{lemma:app1}
    For a fixed dataset $\mathcal{D}_n$ of size $n$ and a point $\mathbf{x}\in\mathcal{X}$, the computation of
    \[
    \pi\bigl(\mathbf{g}'(\mathbf{x})=\mathbf{0}\mid \mathcal{D}_n,\mathbf{x}\bigr)
    \]
    requires $O\bigl(d n^2 + d^2 n + d^3\bigr)$ floating‑point operations, assuming that all quantities that do not depend on $\mathbf{x}$ (for example, $\boldsymbol{\Sigma}_{22}^{-1}$, $\mathbf{P}$, $\mathbf{A}\hat{\boldsymbol{\beta}}$) have been precomputed.
\end{lemma}
\begin{proof}
	The evaluation follows equations~\ref{eq:postpred4}, \ref{eq:postpred_mean1}, \ref{eq:postpred_var1}, \ref{eq:postmean1} and \ref{eq:postvar1} of the main paper. 
	We list the steps and their costs.

    \begin{enumerate}
	    \item[(1)] Compute $\boldsymbol{\Sigma}_{12}(\mathbf{x})$ ($d\times n$). Each column requires $O(d)$ work $\implies$ $O(dn)$.
	    \item[(2)] Compute $\hat{\boldsymbol{\mu}}'(\mathbf{x}) = \mathbf{A}\hat{\boldsymbol{\beta}} + \boldsymbol{\Sigma}_{12}(\mathbf{x})\,\mathbf{v}$, where $\mathbf{v}$ is a precomputed $n$-vector.  
              $\boldsymbol{\Sigma}_{12}\mathbf{v}$ costs $O(dn)$; addition of the constant $O(d)$.
      \item[(3)] Compute $\mathbf{M} = \boldsymbol{\Sigma}_{12}(\mathbf{x})\,\boldsymbol{\Sigma}_{22}^{-1}$: $d\times n$ times $n\times n$ $\implies$ $O(d n^2)$.
      \item[(4)] Compute $\tilde{\boldsymbol{\Sigma}} = \boldsymbol{\Sigma}_{11} - \mathbf{M}\,\boldsymbol{\Sigma}_{12}(\mathbf{x})^T$:  
              $\mathbf{M}\boldsymbol{\Sigma}_{12}^T$ is $d\times n$ times $n\times d$ $\implies$ $O(d^2 n)$; subtraction $O(d^2)$.
      \item[(5)] Compute $\mathbf{Q} = \mathbf{A} - \mathbf{M}\mathbf{H}$:  
              $\mathbf{M}\mathbf{H}$ is $d\times n$ times $n\times(d+1)$ $\implies$ $O(d^2 n)$; subtraction $O(d^2)$.
      \item[(6)] Compute $\mathbf{Q}\mathbf{P}\mathbf{Q}^T$:  
              $\mathbf{Q}\mathbf{P}$: $d\times(d+1)$ times $(d+1)\times(d+1)$ $\implies$ $O(d^3)$.  
              Then $(\mathbf{Q}\mathbf{P})\mathbf{Q}^T$: $d\times(d+1)$ times $(d+1)\times d$ $\implies$ $O(d^3)$.  
              Add to $\tilde{\boldsymbol{\Sigma}}$: $O(d^2)$.
      \item[(7)] Invert $\hat{\boldsymbol{\Sigma}}$ (size $d\times d$): $O(d^3)$ (Cholesky or LU).
      \item[(8)] Compute the quadratic form $\hat{\boldsymbol{\mu}}'^T \hat{\boldsymbol{\Sigma}}^{-1} \hat{\boldsymbol{\mu}}'$:  
              matrix‑vector product $O(d^2)$, dot product $O(d)$.
      \item[(9)] Combine with the precomputed constant denominator: $O(1)$.
    \end{enumerate}
    Summing the dominating terms gives $O(dn^2 + d^2 n + d^3)$. 
\end{proof}

\subsection{Cost of one TMCMC iteration (step (1))}

\begin{lemma}
	\label{lemma:app2}
    Each iteration of the TMCMC loop 
	executed in step (1) of Algorithm~\ref{algo:algo1} has complexity
    \[
    O\bigl(d^3 + d^2 n_0 + d n_0^2\bigr).
    \]
\end{lemma}
\begin{proof}
	An iteration proposes a new point $\mathbf{y}$ (cost $O(d)$), evaluates its density using Lemma~\ref{lemma:app1} with $n=n_0$ (cost as above), and performs an acceptance test (constant time). The density of the current point is stored from the previous iteration. Hence the total cost is dominated by the density evaluation. 
\end{proof}

\subsection{Cost of one stage of the refinement loop (step (2))}

\begin{lemma}
	\label{lemma:app3}
	Consider stage $k$ of step (2), where before the stage the dataset size is $N_k$ (with $N_1=n_0$) and after augmentation it becomes $N_{k+1}=N_k+n_k$ ($0\le n_k\le L$). The following are available from the previous stage: all precomputed matrices for dataset size $N_k$, and the stored denominator densities $\pi(\mathbf{g}'(\mathbf{x}_i^*)=\mathbf{0}\mid\mathcal{D}_{N_k},\mathbf{x}_i^*)$ for all stored samples $\mathbf{x}_i^*$. Then the computational cost of stage $k$ is
    \[
    \begin{aligned}
    C_k = O\Bigl(&N\bigl(d^3 + d^2 N_k + d N_k^2\bigr) \quad\text{(density evaluations for weights)}\\
    &+ N \quad\text{(weight multiplication and storage)}\\
    &+ (N+M) \quad\text{(resampling)}\\
    &+ M d^3 \quad\text{(gradient/Hessian checks)}\\
    &+ \mathbf{1}_{\{n_k>0\}}\bigl(N_{k+1}^3 + d^3 + d^2 N_{k+1} + n_k C_f\bigr) \quad\text{(augmentation)}\Bigr),
    \end{aligned}
    \]
    where $N$ is the number of stored samples, $M$ the resample size, and $C_f$ the cost of evaluating the objective function $f$ at a single point.
\end{lemma}
\begin{proof}
    The work is broken down as follows.

    \begin{itemize}
	    \item[(1)] \textit{Weight update (density evaluation).} For each of the $N$ stored samples we need the new density $\pi(\mathbf{g}'(\mathbf{x}_i^*)=\mathbf{0}\mid\mathcal{D}_{N_k},\mathbf{x}_i^*)$ (numerator). By Lemma~\ref{lemma:app1} this costs $N\cdot C_{\text{eval}}(N_k) = O(N(d^3 + d^2 N_k + d N_k^2))$.
	    \item[(2)] \textit{Weight update (multiplication).} Each new density is multiplied by the stored previous weight $w_{k-1}(\mathbf{x}_i^*)$ to obtain $w_k(\mathbf{x}_i^*)$, which is then stored. This costs $O(N)$.
	    \item[(3)] \textit{Resampling.} 
		    To draw $M$ distinct indices without replacement with probabilities proportional to the normalized weights $w_k(\bx^*_i)$, we can use the alias method (\ctn{Walker1974,Walker1977,Vose1991}) or systematic resampling (\ctn{Kitagawa1996,Carpenter1999}). Both methods require $O(N)$ preprocessing: building the alias table for the alias method or computing cumulative sums for systematic resampling. After preprocessing, each sample can be drawn in $O(1)$ time using the alias method, while systematic resampling draws all $M$ samples in $O(M)$ time. Consequently, the total cost of resampling is $O(N + M)$.
	    \item[(4)] \textit{Gradient and Hessian checks.} For each of the $M$ resampled points, we compute $\|\mathbf{f}'(\mathbf{x})\|_d$ ($O(d)$), assemble the Hessian $\mathbf{\Sigma}''(\mathbf{x})$ ($O(d^2)$), and perform Cholesky decomposition to test positive definiteness ($O(d^3)$). Total $O(M d^3)$.
	    \item[(5)] \textit{Augmentation (if $n_k>0$).} We evaluate $f(\mathbf{x})$ for the $n_k$ points that satisfy the gradient condition. The cost per evaluation is $C_f$, so total $O(n_k C_f)$. Then we recompute all precomputed quantities for the new dataset size $N_{k+1}$: forming $\boldsymbol{\Sigma}_{22}$ ($O(N_{k+1}^2)$) and its inverse ($O(N_{k+1}^3)$); recomputing $\mathbf{P}=(\mathbf{H}^T\boldsymbol{\Sigma}_{22}^{-1}\mathbf{H}+\boldsymbol{\Sigma}_0^{-1})^{-1}$ ($O(d^3+d^2 N_{k+1})$); and other auxiliary matrices ($O(d^2 N_{k+1})$). The dominant cost is $O(n_k C_f+N_{k+1}^3 + d^3 + d^2 N_{k+1})$.
    \end{itemize}
    Summing all contributions yields the stated bound. 
\end{proof}

\section*{Total complexity of Algorithm~\ref{algo:algo1}}

\begin{theorem}
   Let Algorithm~1 be run with initial dataset size $n_0$, $N_{\text{TMCMC}}$ TMCMC iterations, $N$ stored samples, resample size $M$, at most $K$ refinement stages, and at most $L$ points augmented per stage. 
Let $C_f$ denote the cost of evaluating $f$ at a single point. Then the total time complexity is
    \[
    \begin{aligned}
    T_{\text{total}} = O\Bigl(&n_0^3 + d^3 + d^2 n_0 \quad\text{(precomputation)}\\
    &+ N_{\text{TMCMC}}(d^3 + d^2 n_0 + d n_0^2) \quad\text{(TMCMC)}\\
    &+ \sum_{k=1}^{K} N(d^3 + d^2 N_k + d N_k^2) \quad\text{(density evaluations for weights)}\\
    &+ K N \quad\text{(weight multiplication)}\\
    &+ K(N+M) \quad\text{(resampling)}\\
    &+ K M d^3 \quad\text{(gradient/Hessian checks)}\\
    &+ \sum_{k: n_k>0} (N_{k+1}^3 + d^3 + d^2 N_{k+1} + n_k C_f) \quad\text{(augmentation)}\Bigr),
    \end{aligned}
    \]
    where $N_1=n_0$, $N_{k+1}=N_k+n_k$, and $0\le n_k\le L$.

    Since $N_{\max}= \max_k N_k \le n_0+ LK$, the total complexity simplifies to
    \[
\begin{aligned}
T_{\text{total}} = O\!\Bigl( & N_{\text{TMCMC}}(d^3 + d^2 n_0 + d n_0^2) + K N (d^3 + d^2 N_{\max} + d N_{\max}^2) \\
& + K(N + M d^3) + K_{\text{aug}} (N_{\max}^3 + L C_f) \Bigr),
\end{aligned}
\]
    where $K_{\text{aug}}=\#\{k:n_k>0\}\le K$.
\end{theorem}
\begin{proof}
    The total cost is the sum of:
    \begin{enumerate}
	    \item[(1)] \textit{Precomputation (once)}. Form $\boldsymbol{\Sigma}_{22}$ ($O(n_0^2)$), its inverse ($O(n_0^3)$), $\mathbf{P}$ ($O(d^3+d^2 n_0)$), and other constant‑time operations. Hence $O(n_0^3 + d^3 + d^2 n_0)$.
	    \item[(2)] \textit{Step~1 (TMCMC)}: $N_{\text{TMCMC}}$ iterations, each costing $O(d^3+d^2 n_0+d n_0^2)$ by Lemma~\ref{lemma:app2}.
	    \item[(3)] \textit{Step~2}: sum of the costs of $K$ stages given by Lemma~\ref{lemma:app3}. Summing over $k=1,\dots,K$ yields the terms shown. The term $\sum_{k} N(d^3+d^2 N_k+d N_k^2)$ appears because each stage requires $N$ density evaluations with dataset size $N_k$. The additional $K N$ accounts for the multiplications. The $K(N+M)$ and $K M d^3$ come from resampling and gradient/Hessian checks. The last sum accounts for the recomputation after augmentation.
    \end{enumerate}
    Using $N_k\le N_{\max}$, we obtain the simplified bound. The cost $C_f$ appears only in stages where $n_k>0$, and each such stage contributes at most $L C_f$, so the total contribution from function evaluations is $O(K_{\text{aug}} L C_f)$. 
\end{proof}

\begin{remark}
    The cost $C_f$ is problem‑dependent. In all examples of this paper, $f$ is a sum of elementary functions (polynomials, exponentials, etc.) evaluated at $O(md)$ terms, where $m$ is the number of data points. Because $m$ is at most $200$ when $d=100$, we have $C_f = O(d^2)$ (polynomial in $d$). The dominant term in the total complexity is then $N_{\text{TMCMC}} d^3$, since $N_{\text{TMCMC}}$ is large (for example, $5\times10^6$). This matches the empirical runtimes reported in Section~\ref{sec:exps}.
\end{remark}

\begin{remark}
  If the objective function $f$ were such that $C_f$ grows faster than any polynomial (for example, exponentially) in $d$, the total complexity could be dominated by 
$K_{\text{aug}} L C_f$. However, for the functions considered in this paper (and for most practical optimization problems where derivatives are available), 
$C_f$ is at most polynomial in $d$, and the TMCMC term remains the primary bottleneck for moderate to large $d$.
\end{remark}

\newpage
\normalsize
\bibliographystyle{natbib}
\bibliography{irmcmc}

\end{document}